\author{Luca Martinazzi\thanks{The first author was supported by the ETH Research 
Grant no. ETH-02 08-2 and by the Italian FIRB Ideas ``Analysis and beyond''.}\\ 
\small{Centro De Giorgi, Pisa} \\ \footnotesize{\texttt{luca.martinazzi@sns.it}}
\and Michael Struwe\\ \small{ETH Zurich} \\ \footnotesize{\texttt{struwe@math.ethz.ch}}}
\title{Quantization for an elliptic equation of order $2m$ with critical exponential non-linearity}
\newtheorem{trm}{Theorem}
\newtheorem{prop}[trm]{Proposition}
\newtheorem{lemma}[trm]{Lemma}
\newcommand{\R}[1]{\mathbb{R}^{#1}}
\newcommand{\N}{\mathbb{N}}
\newcommand{\de}{\partial}
\newcommand{\ve}{\varepsilon}
\newenvironment{proof}{\noindent\emph{Proof.}}{\hfill$\square$\medskip}
\DeclareMathOperator{\loc}{loc}
\DeclareMathOperator*{\dist}{dist}
\DeclareMathOperator*{\Intm}{\int\!\!\!\!\!\! \rule[2.6pt]{6.5pt}{.4pt}}
\DeclareMathOperator{\vol}{vol}
\begin{document}

\maketitle

\begin{abstract}
On a smoothly bounded domain $\Omega\subset\R{2m}$ we consider a sequence of positive 
solutions $u_k\stackrel{w}{\rightharpoondown} 0$ in $H^m(\Omega)$ to the equation 
$(-\Delta)^m u_k=\lambda_k u_k e^{mu_k^2}$ subject to 
Dirichlet boundary conditions, where $0<\lambda_k\to 0$. Assuming that
$$\Lambda:=\lim_{k\to\infty}\int_\Omega u_k(-\Delta)^m u_k dx<\infty,$$
we prove that $\Lambda$ is an integer multiple of $\Lambda_1:=(2m-1)!\vol(S^{2m})$,
the total $Q$-curvature of the standard $2m$-dimensional sphere.
\end{abstract}

\section{Introduction}
Given a smoothly bounded domain $\Omega\subset\R{2m}$, suppose that for each $k \in \N$ 
we have a smooth function $u_k > 0$ satisfying the equation 
\begin{equation}\label{eq0}
  (-\Delta)^m u_k =\lambda_k u_k e^{m u_k^2} \textrm{ in } \Omega
\end{equation}
with 
\begin{equation}\label{1.2}
  u_k=\partial_\nu u_k=\ldots =\partial_\nu^{m-1} u_k=0 \textrm{ on }\partial\Omega,
\end{equation}
where $0<\lambda_k\to 0$ as $k \to \infty$. We assume that $(u_k)$ is bounded in $H^m(\Omega)$.
Hence, after passing to a subsequence and integrating by parts we may assume that
as $k \to \infty$ we have
\begin{equation}\label{eq1}
  \int_\Omega |\nabla^m u_k|^2 dx = \int_\Omega u_k(-\Delta)^m u_k dx
  =\lambda_k\int_\Omega u_k^2 e^{m u_k^2}dx\to\Lambda >0.
\end{equation}
Note that by elliptic estimates the quantity
$$\|u\|:=\big(\int_\Omega |\nabla^m u_k|^2 dx\big)^{1/2} 
=\big(\int_\Omega \sum_{|\alpha| = m} |\partial^{\alpha} u_k|^2 dx\big)^{1/2} $$
defines a norm on the Beppo-Levi space $H^m_0(\Omega)$ which is equivalent to the standard 
Sobolev norm. 

Generalising previous results by Adimurthi and Struwe \cite{AS}, Adimurthi and Druet \cite{AD} 
and Robert and Struwe \cite{RS}, the first author proved in \cite{mar2} the following theorem. 

\begin{trm}\label{trm1} Let $(u_k)$ be a sequence of positive solutions to \eqref{eq0}, \eqref{1.2}
with $0<\lambda_k\to 0$ as $k \to \infty$ and satisfying \eqref{eq1} for some $\Lambda>0$. 
Then $\sup_\Omega u_k\to \infty$ as $k\to\infty$ 
and there exist a subsequence $(u_k)$ and sequences of points 
$x_k^{(i)}\to x^{(i)}\in \overline\Omega$, $1 \le i \le I$, for some integer $I\leq C\Lambda$,
such that the following is true.

For every $1\leq i\leq I$, letting $r_k^{(i)}>0$ be given by
\begin{equation}\label{rik}
\lambda_k (r_k^{(i)})^{2m} u_k^2(x_k^{(i)}) e^{m u_k^2(x_k^{(i)})}=2^{2m}(2m-1)!
\end{equation}
and setting 
$$\eta_k^{(i)}(x):=u_k(x_k^{(i)})(u_k(x_k^{(i)}+r_k^{(i)}x)-u_k(x_k^{(i)}))+\log 2,$$
we have $r_k^{(i)}\to 0$, $\dist(x_k^{(i)},\de \Omega)/r_k^{(i)}\to \infty$ 
as $k\to\infty$, and
\begin{equation}\label{etak}
\eta_k^{(i)}(x) \to \eta_0(x) =\log\frac{2}{1+|x|^2}\textrm{ in } C^{2m-1}_{\loc}(\R{2m})
\textrm{ as } k\to\infty.
\end{equation}
Moreover, for $i \neq j$ there holds
\begin{equation}\label{1.6} 
  \frac{|x_k^{(i)}-x_k^{(j)}|}{r_k^{(i)}}\to\infty \textrm{ as } k\to\infty.
\end{equation}

In addition, with $R_k(x):=\inf_{1\leq i\leq I}|x-x_k^{(i)}|$ there exists a constant $C > 0$
such that there holds
\begin{equation}\label{eq9}
 \lambda_k R_k^{2m}(x) u_k^2(x)e^{m u_k^2(x)} \leq C
\end{equation}
uniformly for all $x\in \Omega$, $k \in \N$.

Finally $u_k\to 0$ in $C^{2m-1}_{\loc}(\overline \Omega\backslash S)$, 
where $S=\{x^{(1)},\ldots, x^{(I)}\}$.
\end{trm}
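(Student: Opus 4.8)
\medskip
\noindent\emph{Proof strategy.}
The plan is to carry out a concentration--compactness blow-up analysis. First I would show $\sup_\Omega u_k\to\infty$ by contradiction: if $\|u_k\|_{L^\infty(\Omega)}\le M$ along a subsequence, then since $\lambda_k\to0$ the right-hand side of \eqref{eq0} is bounded by $\lambda_k Me^{mM^2}\to0$, so elliptic estimates give $u_k\to0$ in $C^{2m}_{\loc}$, while at the same time $\lambda_k\int_\Omega u_k^2e^{mu_k^2}\,dx\le\lambda_k M^2e^{mM^2}|\Omega|\to0$, contradicting \eqref{eq1}. Then I would pick $x_k^{(1)}\in\Omega$ (an interior point, as $u_k$ vanishes on $\de\Omega$) with $u_k(x_k^{(1)})=\sup_\Omega u_k=:\mu_k\to\infty$, and define $r_k^{(1)}$ by the normalization \eqref{rik}. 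Since $t\mapsto t^2e^{mt^2}$ is increasing, $x_k^{(1)}$ also maximizes $\lambda_k u_k^2e^{mu_k^2}$, and that supremum must tend to $\infty$ --- otherwise, splitting according to $u_k\gtrless1$, $\lambda_k u_ke^{mu_k^2}$ would be bounded in $L^\infty(\Omega)$ and elliptic estimates would bound $u_k$ in $C^{2m-1,\alpha}(\overline\Omega)$, contradicting $\mu_k\to\infty$; hence $r_k^{(1)}\to0$. Moreover $\dist(x_k^{(1)},\de\Omega)/r_k^{(1)}\to\infty$, since otherwise the rescaled limit below would be a nontrivial finite-energy solution of the Liouville-type equation on a half-space vanishing to order $m$ on the boundary hyperplane, which a Pohozaev/reflection argument excludes.

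Next I would rescale around $x_k^{(1)}$: with $v_k(x):=u_k(x_k^{(1)}+r_k^{(1)}x)$ and $\eta_k:=\eta_k^{(1)}=\mu_k(v_k-\mu_k)+\log2$, a direct computation from \eqref{eq0} and \eqref{rik} yields
\[
(-\Delta)^m\eta_k=\frac{v_k}{\mu_k}\,(2m-1)!\,\exp\!\Big(2m\eta_k+\frac{m(\eta_k-\log2)^2}{\mu_k^2}\Big)
\]
on balls $B_{R_k}(0)$ with $R_k\to\infty$, where $v_k/\mu_k=1+(\eta_k-\log2)/\mu_k^2\to1$ locally as soon as $\eta_k$ is locally bounded. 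As $x_k^{(1)}$ is a maximum point of $u_k$ we have $v_k\le\mu_k$, hence the a priori upper bound $\eta_k\le\log2$; this is the crucial structural input. The hard part will be the next step: deriving uniform $C^{2m-1,\alpha}_{\loc}(\R{2m})$ bounds for $\eta_k$. For $m=1$ this is routine via the maximum principle, but for $m\ge2$ one must represent $\eta_k$ on balls through the polyharmonic Green function, use Pizzetti/mean-value identities and the Adams--Moser--Trudinger inequality to prevent loss of mass of $(-\Delta)^m\eta_k$ near $\de B_R$, and rule out the appearance of a nontrivial (unbounded) polyharmonic component in the limit. Granting this, one passes to the limit and obtains $\eta_0\le\log2=\eta_0(0)$ solving $(-\Delta)^m\eta_0=(2m-1)!e^{2m\eta_0}$ on $\R{2m}$ with finite energy; the classification of such solutions (bearing in mind $\eta_0\le\log2$) identifies $\eta_0(x)=\log\frac2{1+|x|^2}$, which is \eqref{etak}, and $(2m-1)!\int_{\R{2m}}e^{2m\eta_0}\,dx=\Lambda_1$.

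Finally I would iterate and extract the uniform estimate. By the substitution $x=x_k^{(1)}+r_k^{(1)}y$ and Fatou,
\[
\liminf_{k\to\infty}\lambda_k\int_{B_{Rr_k^{(1)}}(x_k^{(1)})}u_k^2e^{mu_k^2}\,dx\ \ge\ (2m-1)!\int_{B_R}e^{2m\eta_0}\,dy,
\]
and letting $R\to\infty$ shows that a definite quantum $\Lambda_1$ of the energy concentrates at $x^{(1)}$. As long as energy persists away from the points found so far, I would select a new point $x_k^{(i+1)}$ maximizing $\lambda_k u_k^2e^{mu_k^2}$ over the region where $R_k$ dominates all previous scales, set $r_k^{(i+1)}$ by \eqref{rik}, and repeat the second paragraph to obtain another standard bubble; the selection forces $|x_k^{(i)}-x_k^{(j)}|/r_k^{(i)}\to\infty$, i.e.\ \eqref{1.6}. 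Since each step consumes essentially $\Lambda_1$ of the total energy $\Lambda$, after at most $I\le C\Lambda$ steps no further concentration is possible, and this is precisely \eqref{eq9}: if $\sup_\Omega\lambda_k R_k^{2m}u_k^2e^{mu_k^2}\to\infty$ one could blow up around a maximizing sequence $y_k$ with a new scale $\tilde r_k$, and, using $R_k(y_k)/\tilde r_k\to\infty$, verify that the resulting bubble lies at infinite rescaled distance from every $x_k^{(i)}$, yielding an additional $(I{+}1)$-th concentration point and contradicting the exhaustion. Off $S$ one has $R_k\ge c>0$ on compact sets, so \eqref{eq9} gives $\lambda_k u_k^2e^{mu_k^2}\le C$ there; splitting again into $\{u_k\ge1\}$ and $\{u_k<1\}$ shows $\lambda_k u_ke^{mu_k^2}$ is bounded in $L^\infty_{\loc}(\overline\Omega\bs S)$ and tends to $0$ (using $\lambda_k\to0$, the ensuing local $C^{2m-1,\alpha}$ bound on $u_k$, and $u_k\stackrel{w}{\rightharpoondown}0$), whence $u_k\to0$ in $C^{2m-1}_{\loc}(\overline\Omega\bs S)$ by elliptic estimates. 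The delicate polyharmonic regularity in the second paragraph --- and the same circle of ideas in the proof of \eqref{eq9} --- is where the real work lies.
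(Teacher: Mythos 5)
This paper does not actually prove Theorem~\ref{trm1}; it is quoted from \cite{mar2} (building on \cite{AS,AD,RS}), so there is no ``paper's own proof'' here to compare against. Reviewing your proposal on its own merits: the overall architecture (blow up at a maximum to produce a bubble, iterate a Druet--Struwe-type selection over regions where the previously found scales are exhausted, quantize the energy per bubble by Fatou, and conclude \eqref{eq9} by a final maximality argument) is the standard and correct one, and in outline it matches \cite{mar2}. Your computation of the rescaled equation $(-\Delta)^m\eta_k=\tfrac{v_k}{\mu_k}(2m-1)!\exp\big(2m\eta_k+m(\eta_k-\log2)^2/\mu_k^2\big)$ is right, as is the elementary deduction of $\sup u_k\to\infty$ and $r_k^{(1)}\to 0$.

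The genuine gap is exactly where you flag it: the uniform $C^{2m-1,\alpha}_{\loc}(\R{2m})$ bounds for $\eta_k$ when $m\ge 2$. Everything after ``Granting this'' --- the classification of $\eta_0$, the boundary exclusion via Pohozaev, and the entire iterative exhaustion that leads to \eqref{1.6} and \eqref{eq9} --- depends on those bounds, and they are the hard core of the theorem. The one-sided control $\eta_k\le\log 2$ is far from enough: with no maximum principle for $(-\Delta)^m$ one cannot preclude $\eta_k$ escaping to $-\infty$ locally, nor an unbounded polyharmonic component surviving in the limit. Concretely, the step you call ``routine'' actually requires showing that $(-\Delta)^m\eta_k$ is uniformly in $L^1$ on fixed balls, representing $\eta_k$ through a Green kernel plus a polyharmonic remainder, proving the remainder is bounded in $L^1$ on annuli so that an estimate of the type of Proposition~\ref{c2m} applies, and killing the possible polyharmonic limit via a Liouville-type statement in the spirit of Lemmas~\ref{trmliou} and \ref{lemcap}; none of this is in your proposal. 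Two smaller points that would also need attention: your selection of $x_k^{(i+1)}$ must be phrased so that \eqref{1.6} holds with $r_k^{(i)}$ in the denominator (not merely $\max(r_k^{(i)},r_k^{(j)})$), and the termination of the iteration at $I\le C\Lambda$ needs a uniform lower bound on the energy absorbed by each new bubble --- which again presupposes the missing local compactness of the rescalings.
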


We remark that the function $\eta_0$ given by \eqref{etak} satisfies the $Q$-curvature equation
\begin{equation}\label{eqliou}
(-\Delta)^m \eta_0 =(2m-1)!e^{2m\eta_0}
\end{equation}
and
\begin{equation}\label{etak2}
(2m-1)!\int_{\R{2m}}e^{2m \eta_0}dx=\int_{S^{2m}}Q_{S^{2m}}\mathrm{dvol}_{g_{S^{2m}}}
=(2m-1)!|S^{2m}|=:\Lambda_1.
\end{equation}
For a discussion of the geometric meaning of \eqref{eqliou} we refer to \cite{cha} 
or to the introduction of \cite{mar1}.

\medskip

The purpose of this paper is to prove the following quantization result.

\begin{trm}\label{trm2} Under the hypothesis of Theorem \ref{trm1} we have $\Lambda= I^*\Lambda_1$ 
for some $I^*\in \mathbb{N}\backslash \{0\}$.
\end{trm}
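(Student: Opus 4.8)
The plan is to show that, up to the finitely many bubbles identified in Theorem \ref{trm1}, no energy is lost in the ``neck'' regions between the bubbles and the regular set, so that $\Lambda$ equals the sum of the energies carried by the bubbles, each of which is exactly $\Lambda_1$ by \eqref{etak2}. The first step is a localisation: fix a concentration point $x^{(i)}\in S$, choose a small radius $\rho>0$ so that $\overline{B_\rho(x^{(i)})}$ contains no other point of $S$, and decompose
\begin{equation*}
\int_{B_\rho(x^{(i)})}u_k(-\Delta)^m u_k\,dx
=\int_{B_{Lr_k^{(i)}}(x_k^{(i)})}+\int_{B_\rho(x^{(i)})\setminus B_{Lr_k^{(i)}}(x_k^{(i)})}.
\end{equation*}
By the bubble convergence \eqref{etak} together with \eqref{etak2}, the first integral tends to $\Lambda_1$ as $k\to\infty$ and then $L\to\infty$ (in fact one also has to rule out, in the multi-bubble case, that two points $x_k^{(i)},x_k^{(j)}$ merge at the same scale, but \eqref{1.6} says precisely that they separate, so a standard bubble-tree induction on $I$ reduces everything to estimating a single neck at a time). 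Summing over $i$ and using $u_k\to0$ in $C^{2m-1}_{\loc}(\overline\Omega\setminus S)$ away from the balls, it remains to prove that the neck contributions vanish: for each $i$,
\begin{equation*}
\lim_{L\to\infty}\lim_{\rho\to 0}\lim_{k\to\infty}
\int_{B_\rho(x^{(i)})\setminus B_{Lr_k^{(i)}}(x_k^{(i)})}u_k(-\Delta)^m u_k\,dx=0.
\end{equation*}

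The heart of the argument is this neck estimate, and it is where I expect the main difficulty to lie. The strategy I would use is a Pohozaev-type identity on annuli $A_k(s,t)=B_{sr_k}(x_k^{(i)})\setminus B_{tr_k}(x_k^{(i)})$ or, more robustly, on dyadic annuli $B_{2\rho}\setminus B_\rho$, combined with pointwise decay of $u_k$ in the neck. Concretely, I would first establish that in the neck region $u_k$ is ``radially decreasing on average'' and obeys a good upper bound — this is exactly the content of \eqref{eq9}, which gives $\lambda_k R_k^{2m}u_k^2 e^{mu_k^2}\le C$ and hence controls the nonlinearity $\lambda_k u_k e^{mu_k^2}$ by $C R_k^{-2m}/u_k$. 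Testing the equation against $u_k$ or against $(x-x_k^{(i)})\cdot\nabla u_k$ on the neck and integrating by parts $m$ times, one produces boundary terms on the two spheres $\partial B_{Lr_k}$ and $\partial B_\rho$; the inner boundary terms are computed from the bubble profile $\eta_0$ and tend to $\frac{2m-1}{2m}\Lambda_1$-type constants that cancel against the bubble energy defect, while the outer ones vanish since $u_k\to0$ smoothly there. The genuinely delicate point is to show that the bulk neck integral itself — not just its signed Pohozaev combination — is small; for this one needs an $L^\infty$ bound of the form $u_k(x)\le \frac{C}{u_k(x_k^{(i)})}\log\frac{|x-x_k^{(i)}|}{r_k}$ plus a matching gradient bound, so that $\int_{\mathrm{neck}}\lambda_k u_k^2 e^{mu_k^2}\le C\int_{\mathrm{neck}}R_k^{-2m}\,dx \cdot \sup(\text{neck})\,u_k/u_k(x_k)$, and the logarithmic growth of the volume integral is beaten by the decay of $u_k$ away from the bubble. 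Establishing such pointwise neck bounds for the $2m$-th order operator — where no maximum principle is available and one must instead iterate elliptic estimates together with the a priori bound \eqref{eq9} — is the main obstacle.

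Once the single-neck estimate is in hand, I would assemble the proof by the bubble-tree induction mentioned above: order the bubbles by scale, peel off the fastest-concentrating one, apply the neck estimate to separate it from the rest, and conclude that its contribution to $\Lambda$ is exactly $\Lambda_1$; the remaining solution, rescaled appropriately, still satisfies \eqref{eq0}–\eqref{eq1} with $\Lambda$ replaced by $\Lambda-\Lambda_1$ and one fewer bubble, so induction on $I$ finishes the argument and yields $\Lambda=I^*\Lambda_1$ with $1\le I^*\le I$. Two technical caveats I would keep in mind: first, one must make sure the boundary of $\Omega$ plays no role, which is guaranteed by $\dist(x_k^{(i)},\partial\Omega)/r_k^{(i)}\to\infty$ and by the smooth decay of $u_k$ up to $\partial\Omega\setminus S$; second, in carrying out the integrations by parts for the polyharmonic operator one should work with the full Navier-type decomposition $(-\Delta)^m=(-\Delta)(-\Delta)^{m-1}$ and keep careful track of all the lower-order boundary terms, using \eqref{eq9} and interpolation to show each is negligible on the relevant spheres. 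The positivity $I^*\ge 1$ is immediate since $\sup_\Omega u_k\to\infty$ forces at least one genuine bubble carrying energy $\Lambda_1$.
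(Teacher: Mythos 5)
The central gap is the claim that the neck integral vanishes. This is exactly what can fail, and detecting what happens when it does not is the heart of the theorem. Energy in the region between the bubble scale $Lr_k^{(i)}$ and the macroscopic scale can re-concentrate at intermediate scales $r_k^{(\ell+1)}$, producing secondary bubbles that are \emph{not} identified by Theorem~\ref{trm1}. The radial proof makes this precise through the inductive scheme $(H_\ell)$: each iteration either shows the residual neck energy vanishes (ending the induction) or detects a new scale $r_k^{(\ell+1)}$ at which, by Proposition~\ref{blow}, a fresh bubble profile emerges carrying exactly $\Lambda_1$. Your ``bubble-tree induction on $I$'' would only yield $\Lambda = I\Lambda_1$ with $I^*\le I$, whereas the actual conclusion is $\Lambda = I^*\Lambda_1$ for an a priori different integer $I^*$ (which can exceed $I$); as the paper itself points out, whether $I^*=I$ (equivalently, whether concentration points are simple and isolated) is still open for equation \eqref{eq0}.

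The Pohozaev identity also plays a different role from what you envision. Rather than bounding the neck integral directly, the key inequality $N_k(s,t)\leq P_k(t)/m + C_0 N_k^2(s,t) + o(1)$ of Lemma~\ref{key} functions as a dichotomy: if the residual $N_k$ does not vanish, the boundary term $P_k(t_k)$ is forced away from zero (Lemma~\ref{key2}), which in turn triggers the next blow-up (Proposition~\ref{blow}). Its proof requires an exact representation of $\partial_\nu^m g_k$ (Lemma~\ref{lemmaparti}) and a delicate inductive control of all the intermediate radial derivatives of $w_k$ (Lemma~\ref{lemma2}); there is no maximum principle available, so these steps genuinely go beyond an interpolation argument. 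The pointwise bound you propose, $u_k(x)\le C\,u_k(x_k^{(i)})^{-1}\log(|x-x_k^{(i)}|/r_k)$, is not of the right form: the correct decay estimate, $u_k(0)(u_k(t)-u_k(0))\le b\log(r_k/t)+C$, says that $u_k$ \emph{decreases} logarithmically slowly from its maximum, and it does not by itself yield a small upper bound for $\int_{\rm neck}\lambda_k u_k^2e^{mu_k^2}$. Finally, in the non-radial case a completely new ingredient is needed, the uniform gradient bound $R_k^\ell u_k|\nabla^\ell u_k|\le C$, $1\le\ell\le 2m-1$, of Proposition~\ref{grad}, which controls the oscillation of $u_k$ on spheres and allows one to reduce the argument to radial averages; your proposal gives no indication of how such a bound would be obtained for the $2m$-th order operator without a maximum principle.
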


The analogue of Theorem \ref{trm2} was proven by O. Druet \cite{dru} in dimension $2$ ($m=1$) 
and by the second author \cite{str} in dimension $4$ ($m=2$) in the case of the Navier boundary
condition $u_k=\Delta u_k=0$ on $\de \Omega$.
Note that in the latter case the maximum principle implies that $\Delta u_k\leq 0$ in $\Omega$ whereas
such an estimate is not available in the case of the Dirichlet boundary condition. 

Quantization results similar to Theorem \ref{trm2} previously have also been obtained for concentrating 
sequences of solutions $u_k$ to the $Q$-curvature equation 
\begin{equation}\label{eq2}
(-\Delta)^m u_k=\lambda_k e^{2mu_k}\quad \text{in }\Omega\subset\R{2m}.
\end{equation}
In the case of the Navier boundary condition, assuming that $\lambda_k\to 0$ and
$$\Lambda:=\lim_{k\to\infty}\int_\Omega \lambda_k e^{2mu_k}dx <\infty,$$
J. Wei \cite{wei} proved that when $m=2$ and when $\Omega$ is convex the quantity $\Lambda$ 
is an integer multiple of $\Lambda_1$. Moreover concentration points are simple and isolated, 
in the sense that $x^{(i)}\neq x^{(j)}$ for $i\neq j$, and $I^*=I$ in the notation of 
Theorems \ref{trm1} and \ref{trm2} above. Robert and Wei \cite{RW} proved the analogous result
for a general domain $\Omega$ and in the case of Dirichlet boundary conditions.
In \cite{MP}, the first author and Petrache generalized the result of Robert and Wei to 
arbitrary dimensions.

Equation \eqref{eq0} is more difficult to deal with analytically than equation \eqref{eq2};
the analogous questions whether for a blowing up sequence of solutions to \eqref{eq0} 
the concentration points are isolated, simple and stay away from the boundary 
are still open, even in dimension $2$.

\medskip

Our paper is organized as follows. In the next section we present the proof of Theorem \ref{trm2}
in the case when $\Omega=B_R$ is a ball and each function $u_k$ is radially symmetric. 
In Section \ref{sec3} we prove the theorem in the general case.
Some useful technical results are collected in the Appendix. The overall strategy of the proof 
is very similar to the approach followed in \cite{str}, and some of the 
results in \cite{str} can be carried over almost literally to the present setting. 
Several key steps in the proof, however, require conceptually new ideas in the case when $m \ge 3$.
These ideas also shed new light on the previous approaches in low dimensions and have a unifying 
feature. 

Throughout the paper the letter $C$ denotes a generic 
constant independent of $k$ which can change from line to line, or even within the same line. 

\section{Proof of Theorem \ref{trm2} in the radial case}\label{sec2}

Let $\Omega=B_R=B_R(0)$ and assume that each $u_k$ is radially symmetric. By slight abuse of 
notation we write $u_k(x)=u_k(r)$ if $|x|=r$. In the notation of Theorem \ref{trm1} we then
have $I=1$ and we can choose $x_k^{(1)}=0$ for every $k>0$. In fact, as shown in assertion 
\eqref{stima0.5} of Lemma \ref{lemma2} below, we have $u_k(0)=\max_{\Omega} u_k$.

\subsection{Strategy of the proof}

Set $e_k:=\lambda_k u_k^2 e^{mu_k^2}$ and let 
$$\Lambda_k(r):=\int_{B_r}e_kdx,\quad 
   N_k(s,t):=\Lambda_k(t)-\Lambda_k(s)=\int_{B_t\backslash B_s}e_k dx$$
as in \cite{str}. We shall say that the property $(H_\ell)$ is satisfied if there exist 
sequences
$$s_k^{(0)}:=0<r_k^{(1)}< s_k^{(1)}<\ldots<r_k^{(\ell)}<s_k^{(\ell)}\leq R,\quad k\in\mathbb{N},$$
such that the following holds:
\begin{itemize}
\item[$(H_{\ell,1})$] $\lim_{k\to\infty}\frac{r_k^{(j)}}{s_k^{(j)}}
=\lim_{k\to\infty}\frac{s_k^{(j-1)}}{r_k^{(j)}}=0$ for $1\leq j\leq \ell$,
\item[$(H_{\ell,2})$] $\lim_{k\to\infty}\frac{u_k(s_k^{(j)})}{u_k(Lr_k^{(j)})}=0$ 
for $1\leq j\leq \ell$, $L>0$,
\item[$(H_{\ell,3})$] $\lim_{k\to\infty}\Lambda_k(s_k^{(j)})= j\Lambda_1$ for $1\leq j\leq \ell$,
\item[$(H_{\ell,4})$] $\lim_{L\to\infty}\lim_{k\to\infty}
\big(N_k(s_k^{(j-1)},r_k^{(j)}/L)+ N_k(Lr_k^{(j)},s_k^{(j)})\big)=0$ 
for $1\leq j\leq \ell$.
\end{itemize}

For the proof of Theorem \ref{trm2} we proceed via induction from the following 
two claims: $(H_1)$ holds, and if $(H_\ell)$ holds then either $(H_{\ell+1})$ holds as well, or
\begin{equation}\label{Nk0}
\lim_{k\to\infty} N_k(s_k^{(\ell)},R)=0.
\end{equation}
By \eqref{eq1} and $(H_{\ell,3})$ the induction terminates when $\ell>\frac{\Lambda}{\Lambda_1}$.
Letting ${\ell_0}$ be the largest integer such that $(H_{\ell_0})$ holds, $(H_{{\ell_0},3})$ and \eqref{Nk0} imply
$$\Lambda=\lim_{k\to\infty}\Lambda_k(s_k^{({\ell_0})})+\lim_{k\to\infty}N_k(s_k^{({\ell_0})},R)
={\ell_0}\Lambda_1,$$
and Theorem \ref{trm2} in the radial case follows.

\subsection{Proof of $(H_1)$}
Let $r_k>0$ be defined as in Theorem \ref{trm1} such that
$$\lambda_k r_k^{2m} u_k^2(0) e^{m u_k^2(0)}=2^{2m}(2m-1)!, $$
and set 
$$w_k(x):=u_k(0)(u_k(x)-u_k(0))\ \textrm{ in }B_R.$$  
We have
\begin{eqnarray*}
(-\Delta)^m w_k&=&\lambda_k u_k(0)u_k e^{m u_k^2}\\
&=&\lambda_k u_k(0)u_ke^{mu_k^2(0)}e^{2m\big(1+\frac{w_k}{2u_k^2(0)}\big)w_k}=:f_k\ \textrm{ in }B_R.
\end{eqnarray*}
Letting also
$$\sigma_k(r):=\int_{B_r}f_kdx\geq \Lambda_k(r), $$
then by \eqref{etak} of Theorem \ref{trm1} and \eqref{etak2} clearly we have
\begin{eqnarray}
\lim_{L\to\infty}\lim_{k\to\infty}\Lambda_k(Lr_k)
&=&\lim_{L\to\infty}\lim_{k\to\infty}\sigma_k(Lr_k)\nonumber\\
&=& \lim_{L\to\infty}\lim_{k\to\infty} (2m-1)!\int_{B_L}e^{2m\eta_k}dx=\Lambda_1.\label{etak3}
\end{eqnarray}
For $0<t\leq R$ let $g_k$ solve the equation 
$$ \Delta^m g_k=\Delta^m w_k\ \textrm{ in } B_t$$
with homogeneous Dirichlet boundary data
$$g_k=\de_\nu g_k=\ldots=\de_\nu^{m-1} g_k=0\ \text{ on }\de B_t.$$
Then Lemma \ref{lemmaparti} in the Appendix gives the identity
\begin{equation}\label{parti5}
(-1)^m\de_\nu^m g_k(t)=\frac{A_k(t)}{\omega_{2m-1}t^{3m-2}}
\end{equation}
similar to (20) in \cite{str}, where
\begin{equation}\label{parti6}
A_k(t):=\int_0^{t}t_2\cdots\int_0^{t_{m-1}}t_m \sigma_k(t_m) dt_m\ldots dt_2,
\end{equation}
and where $\omega_{2m-1}$ is the $(2m-1)$-dimensional volume of $S^{2m-1}$. 

\begin{lemma}\label{lemmaL}
For every $b<2$ we can find $L=L(b)$ and $k_0=k_0(b)$ such that for $k\geq k_0$ we have
\begin{equation}\label{denu}
(-\de_\nu)^mg_k(t) \geq \frac{2^{m-1}(m-1)!b}{t^m} \ \text{ for } Lr_k\le t \le R.
\end{equation}
\end{lemma}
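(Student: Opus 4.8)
The plan is to start from the integral representation \eqref{parti5}--\eqref{parti6} and to get a lower bound on $A_k(t)$ by bounding $\sigma_k$ from below on the region $t_m \geq Lr_k$. The key input is \eqref{etak3}: for every $b'<1$ we can pick $L$ large and $k$ large so that $\sigma_k(Lr_k) \geq \Lambda_k(Lr_k) \geq b'\Lambda_1$. Since $\sigma_k$ is nondecreasing in its argument, $\sigma_k(t_m) \geq b'\Lambda_1$ for all $t_m \geq Lr_k$. I would then plug this into the iterated integral \eqref{parti6} and discard the contribution coming from $t_m < Lr_k$, which is negligible because $r_k/t \to 0$ uniformly on the range $Lr_k \le t \le R$ (here one uses that $t \ge Lr_k$, so the lost piece is of relative size $O((Lr_k/t)^{\text{something positive}}) = o(1)$). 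Carrying out the $m-1$ nested integrations of the constant $b'\Lambda_1$ over the simplex $\{0 < t_m < t_{m-1} < \cdots < t_2 < t\}$ with the weights $t_2 t_3 \cdots t_m$ produces a term of the form $c_m\, b'\Lambda_1\, t^{3m-2}$ for an explicit combinatorial constant $c_m$; dividing by $\omega_{2m-1} t^{3m-2}$ as in \eqref{parti5} then yields $(-\partial_\nu)^m g_k(t) \geq c_m b' \Lambda_1 / (\omega_{2m-1}) \cdot t^{-m}$, a bound of exactly the shape \eqref{denu}.

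The remaining task is purely bookkeeping: one must check that the explicit constant $c_m \Lambda_1/\omega_{2m-1}$ equals $2^{m-1}(m-1)!$, so that the statement \eqref{denu} comes out with the claimed numerical prefactor and any $b<2$ is admissible after absorbing $b'<1$ into a factor of $2$. Concretely, $\Lambda_1 = (2m-1)!\,|S^{2m}|$ and $\omega_{2m-1} = |S^{2m-1}|$, and the volumes of the unit spheres have closed forms in terms of $\pi$ and factorials; the simplex integral $\int_0^t t_2 \int_0^{t_2} t_3 \cdots \int_0^{t_{m-1}} t_m\, dt_m\cdots dt_2$ evaluates to $t^{3m-2}$ times a ratio of double factorials. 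I would verify the identity $c_m \Lambda_1 / \omega_{2m-1} = 2^{m}(m-1)!$ so that the bound with $b'$ close to $1$ gives the bound \eqref{denu} with $b$ close to $2$; this is the one place where an arithmetic slip would break the argument, and it is worth checking against the known cases $m=1$ (where one recovers the $m=1$ estimate in \cite{dru}) and $m=2$ (recovering (20)ff. in \cite{str}).

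The main obstacle I anticipate is not the simplex integration but making sure the ``discard the inner part'' step is uniform in $t$ over the whole range $[Lr_k, R]$. The danger is that for $t$ comparable to $Lr_k$ the region $\{t_m < Lr_k\}$ is not a small fraction of the simplex, so one cannot simply throw it away; instead one should note that on that inner region $\sigma_k \geq 0$ anyway (as $f_k \geq 0$), so it only helps the lower bound, and that for $t \gg Lr_k$ the constant-$b'\Lambda_1$ lower bound dominates. Thus the correct phrasing is: $\sigma_k(t_m) \geq b'\Lambda_1 \cdot \mathbf{1}_{\{t_m \geq Lr_k\}}$, integrate this over the simplex, and observe that the resulting lower bound for $A_k(t)/t^{3m-2}$ converges, as $k\to\infty$ and then $L$ is chosen, to $c_m b'\Lambda_1$ uniformly in $t\in[Lr_k,R]$ because the only $t$-dependence left is the overall homogeneity $t^{3m-2}$ which cancels. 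Choosing $b'$ with $2b' > b$ and then $L, k_0$ accordingly finishes the proof.
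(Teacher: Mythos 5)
Your plan matches the paper's (quite terse) proof: bound $\sigma_k$ from below using \eqref{etak3}, feed this into the iterated integral \eqref{parti6}, evaluate the resulting simplex integral and divide by $\omega_{2m-1}t^{3m-2}$ via \eqref{parti5}, and then check that $\frac{\Lambda_1}{\omega_{2m-1}2^{m-1}(m-1)!}=2^m(m-1)!$ so that any $b'<1$ produces the bound with $b=2b'<2$. The arithmetic you propose to verify is exactly the identity the paper records.

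However, your closing uniformity argument has a genuine flaw. If you insert $\sigma_k(t_m)\geq b'\Lambda_1\,\mathbf{1}_{\{t_m\geq Lr_k\}}$ into \eqref{parti6} with the \emph{same} $L$ that delimits the range $t\in[Lr_k,R]$, the truncated simplex integral evaluates to $\frac{(t^2-(Lr_k)^2)^{m-1}}{2^{m-1}(m-1)!}$, so after dividing by $t^{3m-2}$ you are left with the extra factor $(1-(Lr_k/t)^2)^{m-1}$ multiplying $t^{-m}$. This factor vanishes at $t=Lr_k$; it is not uniformly close to $1$ on $[Lr_k,R]$, and the $t$-dependence does not simply ``cancel by homogeneity'' as you assert. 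The correct fix is to separate the two roles of $L$: by \eqref{etak3} and the monotonicity of $\sigma_k$, fix a threshold $L_0=L_0(b')$ with $\sigma_k(r)\geq b'\Lambda_1$ for all $r\geq L_0 r_k$ and $k$ large; then choose the lemma's $L$ as a large multiple of $L_0$. For $t\geq Lr_k$ the extra factor is then at least $(1-(L_0/L)^2)^{m-1}$, which can be taken as close to $1$ as needed, and the remaining deficit is absorbed into the slack $2b'-b>0$. With that correction your argument is complete and coincides with the paper's.
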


\begin{proof}
Noting that 
$$\frac{\Lambda_1}{\omega_{2m-1}2^{m-1}(m-1)!}=2^m (m-1)!,$$
from \eqref{etak3} and \eqref{parti5} together with the identity
$$\int_0^{t}t_2\cdots\int_0^{t_{m-1}}t_mdt_m\cdots dt_2=\frac{t^{2m-2}}{2^{m-1}(m-1)!}$$ 
we obtain the claim.
\end{proof}

These estimates now yield the following result analogous to Lemma 2.1 in \cite{str}.
Note, however, that the statement \eqref{stima0.5} below in the present case no longer can simply
be deduced from the maximum principle, as was the case in \cite{str}. In addition, the higher
order nature of equation \eqref{eq0} requires substantial technical modifications of the approach
used in \cite{str}.   

\begin{lemma}\label{lemma2} For any $b<2$ there is $L=L(b)$ and $k_0=k_0(b)$ such that 
for $k\geq k_0$ there holds
\begin{eqnarray}
w_k'(t)&\leq& -\frac{b}{t}+tP(t) \quad \textrm{in }B_R\backslash B_{Lr_k},\label{stima0}\\
w_k'(t)&\leq& 0 \quad \textrm{in }B_R,\label{stima0.5}\\
w_k(t)&\leq& b\log\Big(\frac{r_k}{t}\Big)+C \quad \textrm{in }B_R,\label{stima1}
\end{eqnarray}
where $P$ is a polynomial independent of $k$. In particular $u_k$ is monotone decreasing. 
For any $\ve \in ]0,1[$ let $T_k>0$ be such that $u_k(T_k)=\ve u_k(0)$. Then we have
\begin{equation}\label{stima1c}
\lim_{k\to\infty}\frac{r_k}{T_k}=0
\end{equation}
and
\begin{equation}\label{stima1b}
\lim_{k\to\infty} \Lambda_k(T_k)=\lim_{k\to\infty} \sigma_k(T_k)=\Lambda_1.
\end{equation}
\end{lemma}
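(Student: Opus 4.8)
\emph{Overview.} The plan is to prove \eqref{stima0} first — this is the heart of the matter — and to deduce everything else from it and from Theorem \ref{trm1}. The idea is to convert the information on $g_k^{(t)}$ carried by Lemma \ref{lemmaL} (equivalently, by \eqref{parti5}--\eqref{parti6}) into a differential inequality for $w_k'$. On $B_t$ we may write $w_k=g_k^{(t)}+p_t$, where $p_t$, being polyharmonic, radial and smooth, is an even polynomial in $|x|$ of degree $\le 2m-2$; since $\de_\nu^i g_k^{(t)}=0$ on $\de B_t$ for $0\le i\le m-1$, the polynomial $p_t$ interpolates the $(m-1)$--jet of $w_k$ at $\de B_t$. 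Hence $(-\de_\nu)^m g_k^{(t)}(t)=(-1)^m\big(w_k^{(m)}(t)-p_t^{(m)}(t)\big)$, where $w_k^{(j)}$ denotes the $j$--th radial derivative. Expressing $p_t^{(m)}(t)$ through the jet of $w_k$, the right--hand side is, as a function of $t$, a linear operator of order $m$ in $w_k$ annihilating every even polynomial of degree $\le 2m-2$; equivalently, as an operator acting on $w_k'$, it is the unique operator of order $m-1$ with leading coefficient $1$ and kernel $\mathrm{span}\{t,t^3,\dots,t^{2m-3}\}$, namely
\[
(-\de_\nu)^m g_k^{(t)}(t)=\frac{(-1)^m}{t^{m-1}}\,\prod_{i=1}^{m-1}\big(\theta-(2i-1)\big)\,\big[w_k'(t)\big],\qquad \theta:=t\,\frac{d}{dt}
\]
(for $m=1,2,3$ this operator applied to $w_k'$ equals $w_k'$, $\ w_k''-t^{-1}w_k'$, $\ w_k'''-3t^{-1}w_k''+3t^{-2}w_k'$, respectively). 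Combined with \eqref{parti5} and Lemma \ref{lemmaL} this yields
\[
(-1)^m\prod_{i=1}^{m-1}\big(\theta-(2i-1)\big)\,\big[w_k'(t)\big]\ \ge\ 2^{m-1}(m-1)!\,b\,t^{-1}\qquad(Lr_k\le t\le R).
\]

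\emph{Proof of \eqref{stima0}, \eqref{stima0.5}, \eqref{stima1}.} I would integrate the last inequality, undoing the $m-1$ Euler factors one at a time via $(\theta-a)^{-1}g(t)=t^a\big(R^{-a}(\,\cdot\,)|_{R}-\int_t^R s^{-a-1}g(s)\,ds\big)$. At the $j$--th step the boundary contribution at $t=R$ is a fixed linear combination of $w_k'(R),\dots,w_k^{(j)}(R)$, which vanishes because \eqref{1.2} supplies exactly the $m-1$ relations $w_k'(R)=\dots=w_k^{(m-1)}(R)=0$; and at each step the integrand has a definite sign (the signs alternating with the level, as dictated by the $(-1)^m$ above), so the inequality is preserved. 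One obtains, for $Lr_k\le t\le R$,
\[
w_k'(t)\ \le\ -\frac bt\,\Big(1-\frac{t^2}{R^2}\Big)^{m-1}\ =\ -\frac bt+tP(t),\qquad P(t):=\frac{b}{t^2}\Big(1-\big(1-\tfrac{t^2}{R^2}\big)^{m-1}\Big),
\]
with $P$ a polynomial (of degree $2m-4$) independent of $k$; this is \eqref{stima0}. Since $0\le(1-t^2/R^2)^{m-1}\le1$ on $[0,R]$, the bound is $\le0$, which gives \eqref{stima0.5} on $[Lr_k,R]$; on $(0,Lr_k]$, \eqref{stima0.5} follows from \eqref{etak}, because $w_k(r_kx)=\eta_k(x)-\log2\to\eta_0(x)-\log2$ in $C^{2m-1}_\loc$ and $\eta_0$ is radially strictly decreasing with $\eta_0''(0)<0$, so $w_k'<0$ on $(0,Lr_k]$ for $k$ large. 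Finally \eqref{stima1} follows by integrating \eqref{stima0} outward from $Lr_k$, using $w_k(Lr_k)\to-\log(1+L^2)$, $\int_0^R t|P(t)|\,dt<\infty$, and $w_k\le C$ on $B_{Lr_k}$ (again from \eqref{etak}).

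\emph{Proof of \eqref{stima1c}, \eqref{stima1b}.} Since $w_k(T_k)=-(1-\ve)u_k^2(0)\to-\infty$, \eqref{stima1} at $t=T_k$ forces $b\log(r_k/T_k)\ge-(1-\ve)u_k^2(0)-C\to-\infty$, i.e. \eqref{stima1c}. For \eqref{stima1b}, monotonicity of $\Lambda_k,\sigma_k$ and \eqref{etak3} give $\liminf_k\sigma_k(T_k)\ge\liminf_k\Lambda_k(T_k)\ge\lim_k\Lambda_k(Lr_k)$ for every fixed $L$, hence $\ge\Lambda_1$. For the matching upper bounds I would estimate the energy on $B_{T_k}\bs B_{Lr_k}$: there $\ve u_k(0)\le u_k<u_k(0)$, so $u_k^2-u_k^2(0)=\tfrac{w_k}{u_k(0)}(u_k+u_k(0))\le(1+\ve)w_k$ (note $w_k<0$ there for $L$ large), whence by \eqref{stima1} and \eqref{rik}, $e_k=\lambda_ku_k^2e^{mu_k^2}\le C\,r_k^{-2m}(r_k/t)^{m(1+\ve)b}$. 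Applying \eqref{stima1} with any $b\in\big(\tfrac{2}{1+\ve},2\big)$ makes $2m-1-m(1+\ve)b<-1$, so
\[
N_k(Lr_k,T_k)=\omega_{2m-1}\!\int_{Lr_k}^{T_k}\! t^{2m-1}e_k\,dt\ \le\ C\,r_k^{m(1+\ve)b-2m}(Lr_k)^{2m-m(1+\ve)b}=C\,L^{2m-m(1+\ve)b},
\]
which tends to $0$ as $L\to\infty$, uniformly for $k\ge k_0$. As $f_k\le\ve^{-1}e_k$ on $B_{T_k}$, also $\sigma_k(T_k)-\sigma_k(Lr_k)\le\ve^{-1}N_k(Lr_k,T_k)$; together with \eqref{etak3} this gives $\limsup_k\Lambda_k(T_k)\le\limsup_k\sigma_k(T_k)\le\Lambda_1$, and \eqref{stima1b} follows.

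\emph{Main obstacle.} The delicate part is the algebraic identity of the \emph{Overview}: writing $(-\de_\nu)^m g_k^{(t)}(t)$ through the jet of $w_k$ and recognising it as $\tfrac{(-1)^m}{t^{m-1}}\prod_{i=1}^{m-1}(\theta-(2i-1))$ applied to $w_k'$, and then checking that the $m-1$ successive integrations produce exactly the boundary terms annihilated by \eqref{1.2} and that the signs propagate through all of them. For $m\le2$ this is transparent; for $m\ge3$ it is precisely where new input is required. One must also use Lemma \ref{lemmaL} \emph{with the sharp constant} — the ratio $\Lambda_1/\big(\omega_{2m-1}2^{2m-2}((m-1)!)^2\big)$ equals $2$ — since this is exactly what permits $b$ to be taken arbitrarily close to $2$, which in turn is what makes the neck estimate above close for every $\ve\in{]0,1[}$.
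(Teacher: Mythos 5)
Your overall strategy is the same as the paper's: decompose $w_k=g_k+h_k$ on $B_t$, identify $\de_\nu^m g_k(t)$ with an $(m-1)$st--order operator acting on $w_k'$ (the paper's identity \eqref{ratta}), then invert factor by factor from $R$ downward and propagate signs. Your Euler--operator rewriting, $\de_\nu^m g_k(t)=t^{-(m-1)}\prod_{i=1}^{m-1}(\theta-(2i-1))[w_k']$ with $\theta=t\,d/dt$, is indeed equivalent to \eqref{ratta} (note though that your three parenthetical examples drop the $(-1)^m$: for $m=1$ the operator gives $-w_k'$, for $m=3$ it gives $-w_k'''+3t^{-1}w_k''-3t^{-2}w_k'$), and the explicit solution $-\frac bt(1-t^2/R^2)^{m-1}$ is a nice closed form for what the paper only produces as $-b/t+tP(t)$. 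The integral estimate for \eqref{stima1b} is the same as the paper's.

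There is, however, a genuine gap in your proof of \eqref{stima1c}. From \eqref{stima1} at $t=T_k$ you correctly get $b\log(r_k/T_k)\ge w_k(T_k)-C=-(1-\ve)u_k^2(0)-C$, but since the right-hand side tends to $-\infty$ this lower bound on $\log(r_k/T_k)$ is vacuous; it does not say $\log(r_k/T_k)\to-\infty$. Because \eqref{stima1} is only an \emph{upper} bound on $w_k$, one cannot extract \eqref{stima1c} from it. What is needed is a \emph{lower} bound on $w_k$ at scales $O(r_k)$, and that comes from the blow-up convergence \eqref{etak}: for any fixed $L$ one has $w_k(Lr_k)=\eta_k(L)-\log 2\to\eta_0(L)-\log 2$, a finite limit, whereas $w_k(T_k)\to-\infty$; by monotonicity $T_k>Lr_k$ for all large $k$, hence $T_k/r_k\to\infty$. (Equivalently, one argues by contradiction as the paper does: if $T_k/r_k\to L<\infty$ along a subsequence, then $\eta_k(T_k/r_k)\to\eta_0(L)$ is finite while $\eta_k(T_k/r_k)=u_k^2(0)(\ve-1)+\log 2\to-\infty$.) Note that your proof of \eqref{stima1b} uses $T_k\ge Lr_k$, so the gap does propagate.

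A smaller point: for \eqref{stima0.5} on $(0,Lr_k]$ you appeal to \eqref{etak} and $\eta_0''(0)<0$. This can be made to work (one needs uniform $C^2$ convergence on a fixed compact neighbourhood of $0$, together with $\eta_0''<-1$ near $0$ and $\eta_0'<0$ away from $0$), but the paper's route is cleaner and avoids the split entirely: from \eqref{parti5} and $A_k(t)\ge 0$ one has $(-\de_\nu)^m g_k(t)\ge 0$ for \emph{all} $0<t\le R$, and the same inductive integration without the $b/t$ source term then gives $w_k'\le 0$ on all of $(0,R]$.
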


\begin{proof} Fix $t>0$ and write $w_k=g_k+h_k$, where
$$\Delta^m h_k=0\textrm{ in }B_t,\ 
\textrm{ and } g_k=\partial_\nu g_k=\ldots=\partial_\nu^{m-1}g_k=0\ \textrm{ on }\partial B_t.$$

\medskip

\noindent\emph{Step 1.} We claim that
\begin{equation}\label{ratta}
\partial_\nu^m g_k(t)=t^{m-1}\underbrace{(t^{-1} (t^{-1}\cdots (t^{-1}}_{m-1 
\textrm{ times}}w_k'(t)\underbrace{)'\cdots)')'}_{m-1\textrm{ times}}.
\end{equation}
Indeed, subtracting $\partial_\nu^m w_k(t)$ from both sides of \eqref{ratta} we need to show
$$-\partial_\nu^m h_k(t)=t^{m-1}\underbrace{(t^{-1} (t^{-1}\cdots(t^{-1}}_{m-1 \textrm{ times}}
w_k'(t)\underbrace{)'\cdots)')'}_{m-1\textrm{ times}}-\partial_\nu^m w_k(t).$$
Using the boundary condition $\partial_\nu^j w_k(t)=\partial_\nu^j h_k(t)$ for 
$0\leq j\leq m-1$, and observing that on the right-hand side the terms involving 
$\partial_\nu^m w_k(t)$ cancel, we can replace $w_k$ by $h_k$ and it suffices to prove
$$-\partial_\nu^m h_k(t)=t^{m-1}\underbrace{(t^{-1} (t^{-1}\cdots(t^{-1}}_{m-1 \textrm{ times}}
h_k'(t)\underbrace{)'\cdots)')'}_{m-1\textrm{ times}}-\partial_\nu^m h_k(t).$$
But $\Delta^m h_k=0$ and radial symmetry imply that $h(r)=\sum_{i=0}^{m-1}\alpha_ir^{2i}$; so
$$\underbrace{(t^{-1} (t^{-1}\cdots(t^{-1}}_{m-1 \textrm{ times}}
h_k'(t)\underbrace{)'\cdots)')'}_{m-1\textrm{ times}}=0, $$
and \eqref{ratta} follows.

\medskip

\noindent\emph{Step 2.} Inserting now \eqref{denu} into \eqref{ratta}, for any given $b<2$
we infer
\begin{equation}\label{stima3}
\begin{split}
&(-1)^{m-1}t^{m-1}\underbrace{\Big(t^{-1} \Big(t^{-1}\cdots\Big(t^{-1}}_{m-1 \textrm{ times}}
\Big(w_k'(t)+\frac{b}{t}\Big)\underbrace{\Big)'\cdots\Big)'\Big)'}_{m-1\textrm{ times}}\\
&=(-1)^{m-1}\partial_\nu^m g_k(t)+\frac{2^{m-1}(m-1)! b}{t^m}\leq 0 \ \textrm{ for } Lr_k\leq t\leq R, 
\end{split}
\end{equation}
provided that we fix $L = L(b)$ sufficiently large and then also choose $k$ large enough. 
We now prove by induction over $1 \le j \le m$ that
\begin{equation}\label{stima4}
\varphi_{j,k}(t):=(-1)^{m-j}t^{-1}\underbrace{\Big(t^{-1} \Big(t^{-1}\cdots\Big(t^{-1}}_{m-j 
\textrm{ times}}\Big(w_k'(t)+\frac{b}{t}\Big)\underbrace{\Big)'\cdots\Big)'\Big)'}_{m-j
\textrm{ times}}\leq P_j(t),
\end{equation}
for $Lr_k\leq t\leq R$, where $P_j(t)\ge 0$ is a polynomial in $t$ independent of $k$.
The case $j=1$ follows at once from \eqref{stima3} with $P_1\equiv 0$. Using the Dirichlet 
boundary condition (which implies $\partial_\nu^j w_k(R)=0$ for $1\leq j\leq m-1$) we get 
$\varphi_{j,k}(R)\le C_j$ for some constant $C_j \ge 0$, $2\leq j\leq m$. Observing that 
$\varphi_{j,k}'(t)=-t\varphi_{j-1,k}(t)$ for $2\leq j\leq m$, we then obtain 
\begin{eqnarray*}
\varphi_{j,k}(t)&=&\varphi_{j,k}(R)+\int_t^R r \varphi_{j-1,k}(r) dr\\
&\leq& C_{j}+\int_t^R r P_{j-1}(r)dr=:P_{j}(t),
\end{eqnarray*}
that is, \eqref{stima4}.
For $j=m$ we get
$$w_k'(t)\leq -\frac{b}{t}+tP_m(t), \ Lr_k\leq t\leq R.$$
Integrating once more, recalling that $L$ depends on $b$, and that $w_k(Lr_k)\to \eta_0(L)$
as $k \to \infty$, for sufficiently large $k$ we find
$$w_k(t)\leq w_k(Lr_k)-b\log\Big(\frac{t}{Lr_k}\Big)+C\leq b\log\Big(\frac{r_k}{t}\Big)+C$$
for $Lr_k\le t\le R$. For $0<t<Lr_k$ \eqref{stima1} already follows from Theorem \ref{trm1}.

In order to prove \eqref{stima0.5}, observe that \eqref{parti5} implies
$$(-\partial_\nu)^m g_k(t)\geq 0\ \textrm{ for } 0< t \leq R,\ k \in \N,$$
and
\eqref{ratta} yields
\begin{equation}\label{stima5}
(-1)^{m-1}t^{m-1}\underbrace{(t^{-1} (t^{-1}\cdots(t^{-1}}_{m-1 \textrm{ times}}
w_k'(t)\underbrace{)'\cdots)')'}_{m-1\textrm{ times}}\leq 0.
\end{equation}
In analogy with \eqref{stima4}, for $1\leq j\leq m$ we can show by induction that 
$$\psi_{j,k}(t):=(-1)^{m-j}t^{-1}\underbrace{(t^{-1} (t^{-1}\cdots(t^{-1}}_{m-j \textrm{ times}}
w_k'(t)\underbrace{)'\cdots)')'}_{m-j\textrm{ times}}\leq 0\ \textrm{ for all } 0<t\leq R.
$$
Indeed $\psi_{1,k}(t)\leq 0$ by \eqref{stima5}, while for $2\leq j\leq m$, we have 
$\psi_{j,k}(R)=0$ thanks to the boundary condition. Hence
$$\psi_{j,k}(t)=\int_t^R r \psi_{j-1}(r) dr \leq 0\ \textrm{ for all } 0<t\leq R,$$ 
and the case $j=m$ implies \eqref{stima0.5}.

\medskip

\noindent\emph{Step 3.} In order to prove \eqref{stima1c}, assume by contradiction that 
$$\liminf_{k\to\infty}\frac{T_k}{r_k}=L\in [0,\infty[.$$ 
Then from Theorem \ref{trm1} for a suitable subsequence  on the one hand we have 
$$u_k(0)(u_k(T_k)-u_k(0))+ \log 2
=\eta_k\bigg(\frac{T_k}{r_k}\bigg)\to \log\bigg(\frac{2}{1+L^2}\bigg)
\textrm{ as } k\to\infty.$$
But on the other hand, since $u_k(0)\to\infty$ we also have that 
$$u_k(0)(u_k(T_k)-u_k(0))=u_k^2(0)(\ve-1)\to\; -\infty$$
as $k\to\infty$, a contradiction. 

It thus remains to prove \eqref{stima1b}. Using \eqref{stima1} and observing that
$$(\ve -1)u_k^2(0)\leq w_k(r)\leq 0\ \textrm{ for }0\leq r\leq T_k,$$
from \eqref{rik} for $k \ge k_0$ we get
\begin{eqnarray*}
f_k(r)&\leq& \lambda_k u_k^2(0)e^{m u_k^2(0)}e^{2m\big(1+\frac{w_k(r)}{2u_k^2(0)}\big)w_k(r)}\\
&\leq& \lambda_k r_k^{2m}u_k^2(0)e^{mu_k^2(0)}r_k^{-2m}e^{m(\ve+1)w_k(r)}
\leq Cr_k^{-2m}\Big(\frac{r_k}{r}\Big)^{m(\ve+1)b}.
\end{eqnarray*}
Choosing now $b<2$ such that $m(\ve+1)b=2m+\ve$, and integrating over $B_{T_k}$, we find
\begin{eqnarray*}
\Lambda_1&\leq&\lim_{k\to\infty}\Lambda_k(T_k)\leq\lim_{k\to\infty}\sigma_k(T_k)\\
&=& \Lambda_1+\lim_{L\to\infty}\lim_{k\to\infty}\int_{B_{T_k}\backslash B_{Lr_k}}f_k dx\\
&\leq&\Lambda_1+C\lim_{L\to\infty}\lim_{k\to\infty}\frac{1}{r_k^{2m}}
\int_{B_{T_k}\backslash B_{Lr_k}}\Big(\frac{r_k}{r}\Big)^{2m+\ve}dx\\
&\leq& \Lambda_1+\frac{C}{\ve}\lim_{L\to\infty}\lim_{k\to\infty}
\Big(\frac{r_k}{Lr_k}\Big)^\ve= \Lambda_1,
\end{eqnarray*}
hence \eqref{stima1b}.
\end{proof}

According to Lemma \ref{lemma2} we can now choose a sequence $\ve_k\to 0$ as 
$k\to\infty$ and corresponding numbers $s_k=T_k(\ve_k)$ such that $u_k(s_k)\to\infty$ 
as $k\to\infty$ and
\begin{equation}\label{eq23}
\lim_{k\to\infty}\frac{r_k}{s_k}=0,\quad \lim_{k\to\infty}\Lambda_k(s_k)=\Lambda_1,
\quad \lim_{L\to\infty} \lim_{k\to\infty}N_k(Lr_k,s_k)=0.
\end{equation}
Observing that Theorem \ref{trm1} implies $\lim_{k\to\infty}\frac{u_k(Lr_k)}{u_k(0)}=1$ 
for every $L\geq 0$, we get
\begin{equation}\label{eq23c}
\lim_{k\to\infty}\frac{u_k(s_k)}{u_k(Lr_k)}=\lim_{k\to\infty}\frac{u_k(s_k)}{u_k(0)}=0,
\quad \text{for all }L>0.
\end{equation}
We also claim that
\begin{equation}\label{Lsk}
\lim_{L\to\infty}\lim_{k\to\infty}N_k(s_k,Ls_k)=0.
\end{equation}
To see this, remember that for $0<s<t<R$
$$N_k(s,t)=\int_{B_t\backslash B_s}e_k dx
  =\omega_{2m-1}\int_s^t \lambda_k r^{2m-1}u_k^2 e^{mu_k^2}dr.$$
Now set
$$P_k(t):=t\frac{\partial}{\partial t}N_k(s,t)
  =t\int_{\partial B_t}e_k d\sigma=\omega_{2m-1}\lambda_k t^{2m}u_k^2(t)e^{mu_k^2(t)}.$$
Using the monotonicity of $u_k$ that we proved in Lemma \ref{lemma2} we immediately obtain
the estimate
\begin{equation}\label{pkt}
P_k(t) =C\omega_{2m-1}\lambda_k u_k^2(t)e^{mu_k^2(t)}\int_{t/2}^t r^{2m-1}dr
\leq CN_k(t/2,t)\leq C P_k(t/2)
\end{equation}
analogous to (26) in \cite{str}; hence we also conclude that
\begin{equation}\label{pkt2}
N_k(t,2t)\leq CN_k(t/2,t) \quad\text{for } t\in [0,R/2].
\end{equation}
Now \eqref{eq23} and \eqref{pkt2} imply that for any $M\in\mathbb{N}$
\begin{eqnarray*}
\lim_{k\to\infty}N_k(2^{M-1}s_k,2^Ms_k)&\leq& C\lim_{k\to\infty}N_k(2^{M-2}s_k,s^{M-1}s_k)\\
&\leq&\cdots\leq C_M\lim_{k\to\infty} N_k(s_k/2,s_k)=0.
\end{eqnarray*}
Therefore if $2^M\geq L$ we have
$$\lim_{k\to\infty}N_k(s_k,Ls_k)\leq \sum_{j=1}^M N_k(2^{j-1}s_k,2^js_k)=0,$$
as claimed.

Setting $r_k^{(1)}:=r_k$, $s_k^{(1)}:=s_k$ and taking into account \eqref{eq23} - \eqref{Lsk} and Theorem \ref{trm1} we see that the property $(H_1)$ is satisfied.

\subsection{The inductive step}

We now assume that $(H_\ell)$ holds for some integer $\ell\geq 1$ and fix numbers
$$s_k^{(0)}=0 < r_k^{(1)}< s_k^{(1)}<\ldots< r_k^{(\ell)}<s_k^{(\ell)},\ k\in\mathbb{N}$$
such that $(H_{\ell,1})$, $(H_{\ell,2})$, $(H_{\ell,3})$ and $(H_{\ell,4})$ hold true.
To complete the proof of Theorem \ref{trm2} it suffices to show that either $(H_{\ell+1})$ 
or \eqref{Nk0} holds. The proof requires the following analogue of (29) in \cite{str}. 

\begin{lemma}\label{key}
There is a constant $C_0=C_0(\Lambda)$ such that for $t_k> s_k^{(\ell)}$ there holds
\begin{equation}\label{poho}
N_k(s_k^{(\ell)},t_k)\leq \frac{P_k(t_k)}{m}+C_0N_k^2(s_k^{(\ell)},t_k)+o(1),
\end{equation}
with error $o(1)\to 0$ as $k\to\infty$
\end{lemma}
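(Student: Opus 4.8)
The inequality \eqref{poho} is a Pohozaev-type identity for the
rescaled problem on the annular region $B_{t_k}\setminus B_{s_k^{(\ell)}}$, exactly in the
spirit of (29) in \cite{str}. The plan is to multiply the equation
$(-\Delta)^m u_k=\lambda_k u_k e^{mu_k^2}$ by the Pohozaev vector field $x\cdot\nabla u_k$ and
integrate over $B_{t_k}\setminus B_{s_k^{(\ell)}}$. The key algebraic point is that
$(x\cdot\nabla u_k)\lambda_k u_k e^{mu_k^2}=\tfrac{1}{2m}x\cdot\nabla\big(\lambda_k e^{mu_k^2}\big)
-\tfrac{1}{2m}\lambda_k e^{mu_k^2}$ in dimension $2m$ — here I use that
$\tfrac{d}{ds}(e^{ms^2})=2ms\,e^{ms^2}$, so $u_k e^{mu_k^2}=\tfrac{1}{2m}\nabla_s(e^{ms^2})$ —
and after integrating the divergence term by parts the bulk contribution of the nonlinearity
collapses to $-\tfrac{1}{2m}\int_{B_{t_k}\setminus B_{s_k^{(\ell)}}}\lambda_k e^{mu_k^2}dx$
plus boundary terms. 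This is \emph{not} quite $N_k(s_k^{(\ell)},t_k)$, which carries the extra
factor $u_k^2$; the passage from $\int\lambda_k e^{mu_k^2}$ to $\int\lambda_k u_k^2 e^{mu_k^2}$
is what produces the quadratic error term $C_0 N_k^2$ and is the heart of the estimate
(see below).

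\textbf{The boundary terms.} On the two spheres $\partial B_{t_k}$ and $\partial B_{s_k^{(\ell)}}$
one gets the usual collection of Pohozaev boundary integrals: terms of the form
$t\int_{\partial B_t}|\nabla^m u_k|^2$, $t\int_{\partial B_t}(\partial_\nu^m u_k)(\partial_\nu^{?}\cdots)$,
and the nonlinearity boundary term $\tfrac{1}{2m}t\int_{\partial B_t}\lambda_k e^{mu_k^2}$.
On the inner sphere $\partial B_{s_k^{(\ell)}}$ all of these are shown to be $o(1)$: by
$(H_{\ell,2})$ we have $u_k(s_k^{(\ell)})\to\infty$ but $u_k(s_k^{(\ell)})/u_k(Lr_k^{(\ell)})\to 0$,
and combined with $(H_{\ell,3})$, $(H_{\ell,4})$, Lemma \ref{lemma2}-type monotonicity, and the
elliptic estimates near $s_k^{(\ell)}$ (the analogue of the decay estimates already established),
the gradient terms on $\partial B_{s_k^{(\ell)}}$ decay and the nonlinearity boundary term is
controlled by $P_k(s_k^{(\ell)})\le C N_k(s_k^{(\ell)}/2,s_k^{(\ell)})\to 0$ via \eqref{pkt}.
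On the outer sphere $\partial B_{t_k}$ the nonlinearity boundary term is exactly $\tfrac{1}{2m}P_k(t_k)$,
which is the term displayed on the right of \eqref{poho}; the gradient boundary terms at $t_k$
must be absorbed, and here one uses that by \eqref{eq9}/Theorem \ref{trm1} and the radial
ODE structure the quantities $t_k^m\partial_\nu^m u_k(t_k)$ etc.\ are controlled by
$N_k(s_k^{(\ell)},t_k)$ up to $o(1)$, or are of a definite sign that helps. The sign bookkeeping
of these $m$-many boundary integrals — keeping track of the signs coming from the
$(-1)^{m-j}$ factors in the iterated radial derivatives, as in \eqref{stima4} — is where the
$m\ge 3$ case genuinely differs from \cite{str} and is the first real obstacle.

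\textbf{The quadratic error term.} This is the main obstacle. To convert
$\int_{B_{t_k}\setminus B_{s_k^{(\ell)}}}\lambda_k e^{mu_k^2}dx$ into
$N_k(s_k^{(\ell)},t_k)=\int_{B_{t_k}\setminus B_{s_k^{(\ell)}}}\lambda_k u_k^2 e^{mu_k^2}dx$
one writes $u_k^2=u_k^2(s_k^{(\ell)})+(u_k^2-u_k^2(s_k^{(\ell)}))$; by $(H_{\ell,2})$ the value
$u_k(s_k^{(\ell)})$ is small compared to the concentration scale but still diverges, so the
strategy is instead to exploit that on the region $r\ge s_k^{(\ell)}$ the function $u_k$ is
\emph{small} in a suitable sense, and to use a Jensen/Trudinger–Moser-type inequality together
with $\int_{B_{t_k}\setminus B_{s_k^{(\ell)}}}|\nabla^m u_k|^2\le N_k(s_k^{(\ell)},t_k)+o(1)$
(which follows from the energy identity \eqref{eq1} and $(H_{\ell,3})$). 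The factor $u_k^2$
against $e^{mu_k^2}$ is the exponential's own logarithmic derivative, so one bounds
$\int\lambda_k u_k^2 e^{mu_k^2}\le C\big(\int\lambda_k e^{mu_k^2}\big)+C\big(\int\lambda_k e^{mu_k^2}\big)
\cdot\sup\text{-type correction}$, and the correction is estimated by
$\int_{B_{t_k}\setminus B_{s_k^{(\ell)}}}\lambda_k u_k^2 e^{mu_k^2}$ itself, i.e.\ by
$N_k(s_k^{(\ell)},t_k)$; feeding this back gives a term $C_0 N_k^2(s_k^{(\ell)},t_k)$. Making
this rigorous requires the Adams–Moser–Trudinger estimate on the annulus with the energy
bound $N_k+o(1)$, exactly as in the Appendix results the paper alludes to, and a careful
choice of how much of $\int\lambda_k e^{mu_k^2}$ is "safe" versus "quadratic." I expect this
interpolation — controlling $\int u_k^2 e^{mu_k^2}$ by $\int e^{mu_k^2}$ plus a quadratic
remainder — to be the step that needs the conceptually new ideas for $m\ge 3$, since the
lack of a maximum principle (noted after Theorem \ref{trm2}) means $u_k$ need not be
monotone or sign-controlled in the general inductive step, and one must instead run the whole
argument through the radial ODE and the iterated-derivative quantities $\varphi_{j,k}$,
$\psi_{j,k}$ introduced in Lemma \ref{lemma2}.
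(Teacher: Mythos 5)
Your plan diverges from the paper's proof in a fundamental way, and it has a gap that I do not think can be closed along the route you sketch.

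The paper does \emph{not} test the PDE against the Pohozaev field $x\cdot\nabla u_k$. Instead it takes the defining radial integral
$N_k(s,t)=\omega_{2m-1}\int_s^t r^{2m-1}\lambda_k u_k^2 e^{mu_k^2}\,dr$
and integrates it by parts once in $r$, writing $r^{2m-1}=\tfrac{1}{2m}(r^{2m})'$. This immediately produces the correct outer boundary term
$\tfrac{1}{2m}\omega_{2m-1}\lambda_k t^{2m}u_k^2(t)e^{mu_k^2(t)}=\tfrac{P_k(t)}{2m}$ (note the $u_k^2$ factor is already there), plus a bulk remainder proportional to
$\int_s^t \lambda_k r^{2m}\big(\tfrac1m+u_k^2\big)\tfrac{u_k}{u_k(0)}w_k'\,e^{mu_k^2}\,dr$. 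The quadratic error then arises because the Green's-function representation of $w_k'$ (derived from the identity in Lemma~\ref{lemmaparti} and from the iterated-derivative decomposition of Lemma~\ref{lemma2}) shows $-t w_k'(t)\tfrac{u_k(t)}{u_k(0)}\le C\,N_k(s,t)+\tfrac{C}{L}+o(1)$, and substituting this back makes the bulk remainder of order $N_k^2$. No Adams--Moser--Trudinger estimate is invoked.

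Your Pohozaev-multiplier route runs into two concrete difficulties. First, the algebra: $x\cdot\nabla(e^{mu_k^2})=2m\,u_k e^{mu_k^2}\,(x\cdot\nabla u_k)$, so $(x\cdot\nabla u_k)\lambda_k u_k e^{mu_k^2}=\tfrac{1}{2m}x\cdot\nabla(\lambda_k e^{mu_k^2})$ \emph{without} your extra $-\tfrac{1}{2m}\lambda_k e^{mu_k^2}$; that term only appears after integrating the divergence, and since $\operatorname{div}(x)=2m$ its coefficient is $1$, not $\tfrac{1}{2m}$. Second and more seriously, the outer nonlinear boundary term this identity produces is $\tfrac{1}{2m}\omega_{2m-1}\lambda_k t^{2m}e^{mu_k^2(t)}$, which is \emph{not} $\tfrac{P_k(t)}{2m}$ because it is missing the factor $u_k^2(t)$. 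So your claim that ``the nonlinearity boundary term is exactly $\tfrac{1}{2m}P_k(t_k)$'' is false, and your entire bulk term involves $\int\lambda_k e^{mu_k^2}$ rather than $N_k=\int\lambda_k u_k^2 e^{mu_k^2}$. Bridging these by a Trudinger--Moser interpolation is not obviously possible at the required precision: the statement one needs is a sharp pointwise/integral relation producing exactly $C_0 N_k^2$, whereas the functional inequalities you invoke give at best logarithmic or exponential bounds with constants that do not obviously collapse to a quadratic in $N_k$. This is precisely why the paper avoids the Pohozaev multiplier: the correct $u_k^2$-weighted quantities are built in from the start by differentiating the integral $N_k$ itself, and the quadratic error is traced to the Green's-function bound on $w_k'$, not to a conversion between two different nonlinear integrals.
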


\begin{proof} For $s=s_k^{(\ell)}<t$ we integrate by parts to obtain
\begin{equation}\label{Nkstima}
\begin{split}
& N_k(s,t)=\omega_{2m-1}\int_s^t r^{2m-1}\lambda_ku_k^2e^{mu_k^2}dr\\
&\quad =\lambda_k\frac{\omega_{2m-1}}{2m}\big(r^{2m}u_k^2 e^{mu_k^2}\big)\big|_s^t
-\frac{\omega_{2m-1}}{2m}\int_s^t \lambda_k r^{2m}(2u_k+2mu_k^3)u_k' e^{mu_k^2}dr\\
&\quad \leq \frac{P_k(t)}{2m}-\omega_{2m-1}\int_s^t\lambda_k r^{2m}\Big(\frac{1}{m}+u_k^2\Big)
\frac{u_k}{u_k(0)}w_k' e^{mu_k^2}dr.
\end{split}
\end{equation}
Define $g_k(t)$ as in the beginning of the proof of Lemma \ref{lemma2}. 
Then \eqref{parti5} and \eqref{ratta} imply
$$\underbrace{(t^{-1} (t^{-1}\cdots (t^{-1}}_{m-1 \textrm{ times}}w_k'(t)
\underbrace{)'\cdots)')'}_{m-1\textrm{ times}}=(-1)^m\frac{A_k(t)}{\omega_{2m-1}t^{4m-3}},$$
where $A_k$ is as in \eqref{parti6}.
Integrating this relation $m-1$ times from $t$ to $R$, and using the Dirichlet boundary 
condition $\de_\nu^j w_k(R)=0$ for $1\leq j\leq m-1$ we get
$$\frac{w_k'(t)}{t}=-\int_t^Rt_1\int_{t_1}^Rt_2\cdots
\int_{t_{m-2}}^R\frac{A_k(t_{m-1})}{\omega_{2m-1}t_{m-1}^{4m-3}}dt_{m-1}\cdots dt_1;$$
hence
$$-t w_k'(t)\frac{u_k(t)}{u_k(0)}=t^2\frac{u_k(t)}{u_k(0)}\int_t^R t_1\int_{t_1}^Rt_2\cdots
\int_{t_{m-2}}^R\frac{A_k(t_{m-1})}{\omega_{2m-1}t_{m-1}^{4m-3}}dt_{m-1}\cdots dt_1=:I.$$
More explicitly, 
\begin{equation*}
 \begin{split}
   I&=t^2\int_t^Rt_1\int_{t_1}^Rt_2\cdots\int_{t_{m-2}}^R\frac{1}{\omega_{2m-1}t_{m-1}^{4m-3}}\\
    &\times\int_0^{t_{m-1}}\rho_1\cdots\int_0^{\rho_{m-2}}\rho_{m-1}\tau_k(\rho_{m-1},t)
   d\rho_{m-1}\cdots d\rho_1\;dt_{m-1}\cdots dt_1,
 \end{split}
\end{equation*}
where
\begin{equation*}
 \tau_k(\rho,t) = \frac{u_k(t)\sigma_k(\rho)}{u_k(0)}
 = \int_{B_{\rho}}\lambda_ku_k(t)u_k e^{mu_k^2}dx.
\end{equation*}
We now show that $I$ can be bounded in terms of $N_k(s,t)$ up to a small error. 
From this the desired inequality \eqref{poho} will be immediate. Split
$$I=:II+III,$$
where $II$ corresponds to $\rho_{m-1}\leq t$. Since $u_k'\leq 0$, for $\rho\leq t$ we have 
\begin{equation}\label{3.1}
 \begin{split}
  \tau_k(\rho,t)& =\int_{B_{\rho}}\lambda_ku_k(t)u_k e^{mu_k^2}dx
  \leq \int_{B_{\rho}}\lambda_ku_k(\rho)u_k e^{mu_k^2}dx\\
  & \leq \int_{B_s}\lambda_k u_k(s)u_ke^{mu_k^2}dx+ N_k(s,\rho)
  \leq N_k(s,t)+o(1)
 \end{split}
\end{equation}
with error $o(1)\to 0$ as $k\to\infty$. Here we used that for arbitrary $L>1$ we can bound
\begin{equation*}
 \int_{B_s}\lambda_k u_k(s)u_ke^{mu_k^2}dx \leq
 N_k(Lr_k^{(\ell)},s)+\frac{u_k(s)}{u_k(Lr_k^{(\ell)})}\Lambda_k(Lr_k^{(\ell)}),
\end{equation*}
and by $(H_{\ell,2})$, $(H_{\ell,4})$ the latter tends to $0$, if first $k\to\infty$ and then $L\to\infty$.
Since 
\begin{equation*}
t^2\int_t^\infty t_1\cdots\int_{t_{m-2}}^ \infty\frac{1}{\omega_{2m-1}t_{m-1}^{4m-3}}
\int_0^{t_{m-1}}\rho_1\cdots\int_0^{\rho_{m-2}}\rho_{m-1}d\rho_{m-1}\cdots dt_1 \le C
\end{equation*}
uniformly in $t$, we conclude that
\begin{equation*}
  II\leq C N_k(s,t)+o(1).
\end{equation*}
In order to obtain a similar bound for $III$, for $t \le \rho$ we estimate  
\begin{equation*}
 \tau_k(\rho,t) = \frac{u_k(t)\sigma_k(\rho)}{u_k(0)}
 = \frac{u_k(t)}{u_k(\rho)+1}\int_{B_{\rho}}\lambda_k(u_k(\rho)+1)u_k e^{mu_k^2}dx\ .
\end{equation*}
Recalling \eqref{3.1}, we have
\begin{equation*}
 \int_{B_{\rho}}\lambda_k(u_k(\rho)+1)u_k e^{mu_k^2}dx 
 \le \tau_k(\rho,\rho) + o(1) \le N_k(s,\rho)+o(1).
\end{equation*}
Also note that by H\"older's inequality we can estimate
\begin{equation*}
  |u_k(t)-u_k(\rho)| \le \int_t^{\rho}|u_k'(r)|dr 
  \le \|\nabla u_k\|_{L^{2m}}
  \big(\log\frac{\rho}{t}\big)^\frac{2m-1}{2m}.
\end{equation*}
Thus, with a constant $C=C(\Lambda)$ for all $t \le \rho$ we obtain
\begin{equation}\label{3.2}
 \begin{split}
   \frac{tu_k(t)}{\rho (u_k(\rho)+1)} & =\frac{t}{\rho}\bigg(\frac{u_k(t)-u_k(\rho)}{u_k(\rho)+1}
   +\frac{u_k(\rho)}{u_k(\rho)+1}\bigg)\\
   & \leq \frac{t}{\rho}\bigg(C\big(\log\frac{\rho}{t}\big)^\frac{2m-1}{2m}+1\bigg)\le C
 \end{split}
\end{equation}
and with $C_1=C_1(\Lambda)$ we can bound 
\begin{equation*}
  \frac{t}{\rho}\tau_k(\rho,t)\le C_1 N_k(s,\rho)+o(1).
\end{equation*}
It follows that
\begin{equation*}
 \begin{split}
   III&=t^2\int_t^Rt_1\int_{t_1}^Rt_2\cdots\int_{t_{m-2}}^R\frac{1}{\omega_{2m-1}t_{m-1}^{4m-3}}\\
    &\quad \times\int_t^{t_{m-1}}\rho_1\cdots\int_t^{\rho_{m-2}}\rho_{m-1}\tau_k(\rho_{m-1},t)
     d\rho_{m-1}\cdots d\rho_1\;dt_{m-1}\cdots dt_1\\
    &=t\int_t^Rt_1\int_{t_1}^Rt_2\cdots\int_{t_{m-2}}^R\frac{1}{\omega_{2m-1}t_{m-1}^{4m-3}}\\
    &\quad \times\int_t^{t_{m-1}}\rho_1\cdots\int_t^{\rho_{m-2}}\rho^2_{m-1}\frac{t}{\rho_{m-1}}
     \tau_k(\rho_{m-1},t)d\rho_{m-1}\cdots d\rho_1\;dt_{m-1}\cdots dt_1\\
    &\le C_1t\int_t^Rt_1\int_{t_1}^Rt_2\cdots\int_{t_{m-2}}^R\frac{1}{\omega_{2m-1}t_{m-1}^{4m-3}}\\
    &\quad \times\int_t^{t_{m-1}}\rho_1\cdots\int_t^{\rho_{m-2}}\rho^2_{m-1}(N_k(s,\rho_{m-1})+o(1))
    d\rho_{m-1}\cdots dt_1.
 \end{split}
\end{equation*}
For any $L\geq 1$ we split the integral with respect to $t_1$ and use the obvious inequality
$N_k(s,\rho_{m-1})\leq 2\Lambda$ for large $k$ to estimate
\begin{equation*}
 \begin{split}
   III&\le C_1t\int_t^{Lt}t_1\int_{t_1}^Rt_2\int_{t_2}^Rt_3\cdots\int_{t_{m-2}}^R\frac{1}{\omega_{2m-1}t_{m-1}^{4m-3}}\\
    &\qquad \times\int_t^{t_{m-1}}\rho_1\cdots\int_t^{\rho_{m-2}}\rho^2_{m-1}(N_k(s,\rho_{m-1})+o(1))
    d\rho_{m-1}\cdots dt_1\\
    &\quad + 2C_1\Lambda t\int_{Lt}^Rt_1\int_{t_1}^Rt_2\int_{t_2}^Rt_3\cdots\int_{t_{m-2}}^R\frac{1}{\omega_{2m-1}t_{m-1}^{4m-3}}\\
    &\qquad \times\int_t^{t_{m-1}}\rho_1\cdots\int_t^{\rho_{m-2}}\rho^2_{m-1}d\rho_{m-1}\cdots dt_1+o(1)\\
 \end{split}
\end{equation*}
Observing the uniform bound
\begin{equation*}
Lt\int_{Lt}^\infty t_1\cdots\int_{t_{m-2}}^ \infty\frac{1}{\omega_{2m-1}t_{m-1}^{4m-3}}
\int_0^{t_{m-1}}\rho_1\cdots\int_0^{\rho_{m-2}}\rho^2_{m-1}d\rho_{m-1}\cdots dt_1 \le C, 
\end{equation*}
with a constant $C_2=C_2(\Lambda)$ we obtain 
\begin{equation*}
 \begin{split}
   III&\le C_1t\int_t^{Lt}t_1\int_{t_1}^Rt_2\int_{t_2}^Rt_3\cdots\int_{t_{m-2}}^R\frac{1}{\omega_{2m-1}t_{m-1}^{4m-3}}\\
    &\qquad \times\int_t^{t_{m-1}}\rho_1\cdots\int_t^{\rho_{m-2}}\rho^2_{m-1}(N_k(s,\rho_{m-1})+o(1))
    d\rho_{m-1}\cdots dt_1\\
    &\quad + \frac{C_2\Lambda}{L}+o(1)\\
 \end{split}
\end{equation*}
To proceed we successively split the integral also with respect to $t_2,\dots,t_{m-1}$ and 
use the uniform bounds
\begin{equation*}
 \begin{split}
  Lt\int_0^{Lt}t_1\cdots&\int_0^{Lt}t_{j-1}\int_{Lt}^\infty t_j\cdots\int_{t_{m-2}}^\infty\frac{1}{\omega_{2m-1}t_{m-1}^{4m-3}}\\
  &\times \int_0^{t_{m-1}}\rho_1\cdots\int_0^{\rho_{m-2}}\rho^2_{m-1}d\rho_{m-1}\cdots dt_j \le C
 \end{split}
\end{equation*}
for $2 \le j < m$ to estimate
\begin{equation*}
 \begin{split}
   III&\le C_1t\int_t^{Lt}t_1\int_{t_1}^{Lt}t_2\int_{t_2}^Rt_3\cdots\int_{t_{m-2}}^R\frac{1}{\omega_{2m-1}t_{m-1}^{4m-3}}\\
    &\qquad \times\int_t^{t_{m-1}}\rho_1\cdots\int_t^{\rho_{m-2}}\rho^2_{m-1}(N_k(s,\rho_{m-1})+o(1))
    d\rho_{m-1}\cdots dt_1\\
    & \quad + 2C_1\Lambda t\int_t^{Lt}t_1\int_{Lt}^Rt_2\int_{t_2}^Rt_3\cdots\int_{t_{m-2}}^R\frac{1}{\omega_{2m-1}t_{m-1}^{4m-3}}\\
    &\qquad \times\int_t^{t_{m-1}}\rho_1\cdots\int_t^{\rho_{m-2}}\rho^2_{m-1}d\rho_{m-1}\cdots dt_1+\frac{C_2\Lambda}{L}+o(1)\\
    & \leq \cdots \leq C_1 t\int_t^{Lt} t_1\int_{t_1}^{Lt}t_2\int_{t_2}^{Lt}t_3\cdots\int_{t_{m-2}}^{Lt}\frac{1}{\omega_{2m-1}t_{m-1}^{4m-3}}\\
   &\qquad \ \times \int_t^{t_{m-1}}\rho_1\cdots\int_t^{\rho_{m-2}}\rho_{m-1}^2
   \big(N_k(s,Lt)+o(1)\big)d\rho_{m-1}\cdots dt_1\\
   &\quad +2C_1\Lambda t\int_{t}^{Lt} t_1\int_{t_1}^{Lt}t_2\int_{t_2}^{Lt}t_3\cdots\int_{Lt}^R\frac{1}{\omega_{2m-1}t_{m-1}^{4m-3}}\\
   &\qquad \ \times \int_t^{t_{m-1}}\rho_1\cdots\int_t^{\rho_{m-2}}\rho_{m-1}^2d\rho_{m-1}
    \cdots dt_1+\frac{C_{m-1}\Lambda}{L}+o(1)\\
   &\leq C_mN_k(s,Lt)+\frac{C_m\Lambda}{L}+o(1),
 \end{split}
\end{equation*}
with constants $C_j=C_j(\Lambda)$, $2\le j \le m$.
Using \eqref{Lsk} in case $t \le 2s$ and \eqref{pkt} in case $t > 2s$ we get
$$N_k(s,Lt)\leq C(L)N_k(s,t)+o(1),$$
and with the constant $C_{m+1}=C_m\Lambda=C_{m+1}(\Lambda)$ there results 
$$-tw_k'(t)\frac{u_k(t)}{u_k(0)}\leq C(L,\Lambda)N_k(s,t)+\frac{C_{m+1}}{L}+o(1).$$
Inserting this into \eqref{Nkstima} we infer
\begin{eqnarray}
N_k(s,t)&\leq& \frac{P_k(t)}{2m}-\omega_{2m-1}\int_s^t\lambda_k r^{2m}
\Big(\frac{1}{m}+u_k^2\Big)\frac{u_k}{u_k(0)}w_k' e^{mu_k^2}dr\nonumber\\
&\leq&\frac{P_k(t)}{2m}+\Big(C(L,\Lambda)N_k(s,t)+\frac{C_{m+1}}{L}\Big) N_k(s,t)+o(1).\label{stima8}
\end{eqnarray}
Choosing $L=2C_{m+1}$ we finally get \eqref{poho} for an appropriate $C_0=C_0(\Lambda)$.
\end{proof}

\begin{lemma}\label{key2}
Let $C_0=C_0(\Lambda)$ be the constant appearing in \eqref{poho}. 
If for some $t_k\in] s_k^{(\ell)},R]$ there holds
\begin{equation}\label{0alpha}
0<\lim_{k\to \infty} N_k(s_k^{(\ell)},t_k)=:\alpha<\frac{1}{2C_0},
\end{equation}
then 
$$\lim_{k\to\infty}\frac{s_k^{(\ell)}}{t_k}=0,\ 
\liminf_{k\to\infty} P_k(t_k)\geq \frac{m\alpha}{2}, \text{ and } 
\lim_{L\to\infty}\lim_{k\to\infty} N_k(s_k^{(\ell)},t_k/L)=0.$$
\end{lemma}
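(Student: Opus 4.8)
The plan is to establish the three assertions in turn, abbreviating $s:=s_k^{(\ell)}$ and using repeatedly that, by Lemma \ref{key}, the inequality \eqref{poho} holds for \emph{any} sequence $t_k\in\,]s_k^{(\ell)},R]$ with an error $o(1)\to0$ as $k\to\infty$. The bound on $P_k(t_k)$ is then immediate: rewriting \eqref{poho} as $P_k(t_k)\geq mN_k(s,t_k)-mC_0N_k^2(s,t_k)-o(1)$ and letting $k\to\infty$, where $N_k(s,t_k)\to\alpha$, we obtain $\liminf_k P_k(t_k)\geq m\alpha(1-C_0\alpha)$, and $C_0\alpha<\tfrac12$ by \eqref{0alpha} makes the right-hand side $\geq\tfrac{m\alpha}{2}$.

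For the separation $s/t_k\to0$ I argue by contradiction. If $\liminf_k t_k/s<\infty$, pass to a subsequence along which $t_k/s\to L_0<\infty$ and fix $L>L_0$; then $t_k<Ls$ for large $k$, so $\alpha=\lim_k N_k(s,t_k)\leq\limsup_k N_k(s,Ls)$, and it suffices to show $\lim_k N_k(s,Ls)=0$ for each fixed $L$. By $(H_{\ell,1})$ we have $s/2>L'r_k^{(\ell)}$ for $k$ large, so $N_k(s/2,s)\leq N_k(L'r_k^{(\ell)},s)$, which tends to $0$ on letting first $k\to\infty$ and then $L'\to\infty$, by $(H_{\ell,4})$; iterating the doubling estimate \eqref{pkt2} then gives $N_k(s,2^Ms)\to0$ for every fixed $M$, exactly as \eqref{Lsk} was obtained in the case $\ell=1$. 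This contradicts $\alpha>0$.

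The last assertion is the crux. For fixed $L>0$ the previous step ensures $t_k/L>s$ for large $k$, so $\nu_k(L):=N_k(s,t_k/L)$ is defined, and $\limsup_k\nu_k(L)\leq\alpha<\tfrac1{2C_0}$ forces $1-C_0\nu_k(L)\geq\tfrac12$ for large $k$. Applying \eqref{poho} to the sequence $(t_k/L)_k$ and using $\nu_k(L)(1-C_0\nu_k(L))\geq\tfrac12\nu_k(L)$ gives $P_k(t_k/L)\geq\tfrac m2\nu_k(L)-o(1)$; inserting this into \eqref{pkt} at scale $t=t_k/L$, that is, $P_k(t_k/L)\leq CN_k(t_k/(2L),t_k/L)=C(\nu_k(L)-\nu_k(2L))$, produces the contraction
$$\nu_k(2L)\leq\Big(1-\tfrac{m}{2C}\Big)\nu_k(L)+o(1)=:\theta\,\nu_k(L)+o(1),\qquad\theta\in(0,1).$$
Iterating $j$ times from $L=1$ --- only finitely many $o(1)$-terms arise for a fixed number of steps, so their weighted sum still vanishes as $k\to\infty$ --- yields $\limsup_k\nu_k(2^j)\leq\theta^j\alpha$; letting $j\to\infty$ and using that $L\mapsto\nu_k(L)$ is non-increasing, we conclude $\lim_{L\to\infty}\lim_{k\to\infty}N_k(s,t_k/L)=0$.

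I expect the real difficulty to be setting up this contraction. One must feed the Pohozaev-type bound \eqref{poho} --- which controls $N_k$ from above in terms of $P_k$ plus a \emph{quadratic} correction --- back into the elementary monotonicity estimate \eqref{pkt} --- which controls $P_k$ from above by the mass on a dyadic annulus --- and the hypothesis $\alpha<\tfrac1{2C_0}$ is precisely what allows the quadratic term to be absorbed while keeping the contraction factor $\theta$ strictly below $1$; this is why the conclusion is available only for $t_k$ carrying small limiting mass.
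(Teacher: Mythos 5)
Your proof is correct and rests on the same two ingredients as the paper's: the first two assertions are handled exactly as in the text (the argument for $\lim_k N_k(s,Ls)=0$ via $(H_{\ell,4})$ and the doubling estimate \eqref{pkt2}, and the lower bound on $P_k(t_k)$ directly from \eqref{poho} together with $C_0\alpha<1/2$). For the third assertion the paper also applies \eqref{poho} at the scale $t_k/L$ and then invokes \eqref{pkt}, but it packages this as a proof by contradiction: if $\nu_k(L)\ge\beta/2$ for all $L$, each dyadic annulus $B_{t_k/L}\setminus B_{t_k/(2L)}$ carries energy at least $m\beta/(2C)$, forcing $\Lambda_k(t_k)\to\infty$ and contradicting \eqref{eq1}. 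You instead extract the same inequality $\nu_k(2L)\le\theta\nu_k(L)+o(1)$, $\theta=1-\tfrac m{2C}$, and iterate it directly as a geometric contraction; this is the identical mechanism (the paper's dyadic lower bound is precisely the statement that $\nu_k(L)-\nu_k(2L)\ge\tfrac m{2C}\nu_k(L)+o(1)$), presented constructively rather than by contradiction. One small point worth recording is that $\theta\in(0,1)$ needs $C>m/2$; the explicit constant in \eqref{pkt} is $C=2m/(1-2^{-2m})>2m$, so this holds, and even if it did not, a negative $\theta$ would only strengthen the conclusion.
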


\begin{proof}
Assume that for some $t_k\in ]s_k^{(\ell)},R]$ we have \eqref{0alpha}.
Since the same reasoning as in the proof of 
\eqref{Lsk} also yields that
$$\lim_{L\to\infty}\lim_{k\to\infty} N_k(s_k^{(\ell)},Ls_k^{(\ell)})=0,$$
necessarily $s_k^{(\ell)}/t_k\to 0$ as $k\to\infty$.
Moreover, \eqref{poho} yields
\begin{equation}\label{eq31}
 \begin{split}
   \liminf_{k\to\infty}\frac{P_k(t_k)}{m}& \geq\lim_{k\to\infty}\Big(N_k(s_k^{(\ell)},t_k)-
   C_0 N^2_k(s_k^{(\ell)},t_k)\Big)\\
   &\geq \lim_{k\to \infty}\frac{N_k(s_k^{(\ell)},t_k)}{2}=\frac{\alpha}{2},
 \end{split}
\end{equation}
as claimed. Now we show that
\begin{equation}\label{eq31b}
\lim_{L\to\infty}\lim_{k\to\infty}N_k(s_k^{(\ell)},t_k/L)=0.
\end{equation}
Indeed, if we assume
$$\lim_{L\to\infty}\limsup_{k\to\infty}N_k(s_k^{(\ell)},t_k/L)=\beta>0,$$
we have
$$\frac{\beta}{2}\leq N_k(s_k^{(\ell)},t_k/L)\leq N_k(s_k^{(\ell)},t_k)<\frac{1}{2C_0}$$
for any $L\geq 1$ and sufficiently large $k$. Therefore
we can apply \eqref{eq31} with $t_k/L$ instead of $t_k$ for any $L\geq 1$ to get
$$\lim_{k\to\infty}P_k(t_k/L)\geq \frac{m\beta}{2}.$$
Then \eqref{pkt} yields
$$C\lim_{k\to\infty}N_k(t_k/(2L),t_k/L)\geq \lim_{k\to\infty}P_k(t_k/L)\geq\frac{m\beta}{2}.$$
Choosing $L=2^j$ and summing over $j$ for $0\leq j\leq M-1$, we get
$$C\lim_{k\to\infty}\Lambda_k(t_k)\geq C\lim_{k\to\infty}N_k(2^{-M}t_k,t_k)
\geq \frac{mM\beta}{2}\to \infty\quad \text{as } M\to\infty,$$
which contradicts \eqref{eq1}. Therefore \eqref{eq31b} is proven.
\end{proof}

Suppose now that for some $t_k\geq s_k^{(\ell)}$ there holds
$$\limsup_{k\to\infty}N_k(s_k^{(\ell)},t_k)>0.$$
We then want to show that $(H_{\ell+1})$ holds. We can choose numbers 
$r_k^{\ell+1}\in]s_k^{(\ell)},t_k[$ such that for a subsequence there holds
\begin{equation}\label{eq32}
0<\lim_{k\to\infty}N_k(s_k^{(\ell)},r_k^{(\ell+1)})<\frac{1}{2C_0},
\end{equation}
where $C_0$ is as in Lemma \ref{key2}. Observe that Lemma \ref{key2} then implies
\begin{equation}\label{eq33}
\lim_{k\to \infty}s_k^{(\ell)}/r_k^{(\ell+1)}
=\lim_{L\to\infty}\lim_{k\to\infty}N_k(s_k^{(\ell)},r_k^{(\ell+1)}/L)=0,
\end{equation}
and
\begin{equation}\label{eq34}
\lim_{k\to\infty}P_k(r_k^{(\ell+1)})>0.
\end{equation}

\begin{prop}\label{blow} We have
$$\eta_k^{(\ell+1)}(x):=u_k(r_k^{(\ell+1)})\big(u_k(r_k^{(\ell+1)}x)-u_k(r_k^{(\ell+1)})\big)
\to \eta^{(\ell+1)}$$
in $C^{2m-1}_{\loc}(\R{2m}\backslash\{0\})$. Moreover, for a suitable constant  
$c^{(\ell+1)}$ the function  $\eta_0^{(\ell+1)}:=\eta^{(\ell+1)}+ c^{(\ell+1)}$ satisfies
$$(-\Delta)^m\eta_0^{(\ell+1)}=(2m-1)!e^{2m \eta_0^{(\ell+1)}},
\quad \int_{\R{2m}}(2m-1)! e^{2m \eta_0^{(\ell+1)}}dx=\Lambda_1.$$
\end{prop}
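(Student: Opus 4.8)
We follow the standard rescaling scheme; the real difficulty lies in the very last step. Write $\rho_k:=r_k^{(\ell+1)}$, $s_k:=s_k^{(\ell)}$, $U_k:=u_k(\rho_k)$ and $\eta_k:=\eta_k^{(\ell+1)}$, so that $\eta_k(x)=U_k\bigl(u_k(\rho_k x)-U_k\bigr)$; by the monotonicity of $u_k$ from Lemma~\ref{lemma2} we have $\eta_k\ge0$ on $B_1$, $\eta_k\le0$ on $\R{2m}\bs B_1$ and $\eta_k(x)=0$ for $|x|=1$. First I would record the rescaled equation. By \eqref{eq34} and \eqref{pkt} the quantity $P_k(\rho_k)=\omega_{2m-1}\lambda_k\rho_k^{2m}U_k^2e^{mU_k^2}$ is bounded above and bounded away from $0$; along a subsequence $P_k(\rho_k)\to\omega_{2m-1}\mu$ with $\mu>0$. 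Since $\rho_k\le R$ and $\lambda_k\to0$ this forces $U_k\to\infty$, and then the last assertion of Theorem~\ref{trm1} forces $\rho_k\to0$, so the annuli $B_{R/\rho_k}\bs B_{s_k/\rho_k}$ exhaust $\R{2m}\bs\{0\}$. Using $u_k(\rho_k x)=U_k(1+\eta_k(x)/U_k^2)$ one computes
\[
  (-\Delta)^m\eta_k=\tilde f_k:=\tfrac{P_k(\rho_k)}{\omega_{2m-1}}\Bigl(1+\tfrac{\eta_k}{U_k^2}\Bigr)e^{2m\eta_k+m\eta_k^2/U_k^2}.
\]

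Next I would establish a priori bounds and pass to the limit. Estimate \eqref{eq9} of Theorem~\ref{trm1}, which in the radial case reads $\lambda_k|y|^{2m}u_k^2(y)e^{mu_k^2(y)}\le C$, rescales to the uniform upper bound $\eta_k(x)\le C-\log|x|$; in particular $\eta_k$ is bounded above on every annulus $B_L\bs B_{1/L}$. Together with $\eta_k\ge0$ on $B_1$, the smallness in \eqref{eq32}--\eqref{eq33} (which prevents $Q$-curvature from collapsing onto a compact annulus), and the radial ODE identities of the type \eqref{ratta}--\eqref{stima4}---now exploiting $\eta_k(x)=0$ on $|x|=1$ and the Dirichlet data at the rescaled outer radius $R/\rho_k\to\infty$---one also bounds $\eta_k$ from below on each $B_L\bs B_{1/L}$. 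Hence $\tilde f_k$ is locally bounded on $\R{2m}\bs\{0\}$, elliptic estimates give $(\eta_k)$ bounded in $C^{2m-1}_{\loc}(\R{2m}\bs\{0\})$, and along a subsequence $\eta_k\to\eta^{(\ell+1)}$ there. As $U_k\to\infty$, we get $\tilde f_k\to\mu e^{2m\eta^{(\ell+1)}}$ locally uniformly away from $0$, so $(-\Delta)^m\eta^{(\ell+1)}=\mu e^{2m\eta^{(\ell+1)}}$ on $\R{2m}\bs\{0\}$, and by Fatou's lemma and $N_k\le2\Lambda$ one has $\int_{\R{2m}}e^{2m\eta^{(\ell+1)}}<\infty$.

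To remove the singularity at $0$ I would show $\lim_{\delta\to0}\lim_{k\to\infty}\int_{B_\delta}\tilde f_k\,dx=0$. Indeed $\int_{B_\delta}\tilde f_k\,dx=\int_{B_{\delta\rho_k}}\tfrac{U_k}{u_k}\,e_k\,dy$; splitting $B_{\delta\rho_k}$ into $B_{s_k}$ and $B_{\delta\rho_k}\bs B_{s_k}$, the first piece is $\le\tfrac{U_k}{u_k(s_k)}\int_{B_{s_k}}\lambda_ku_k(s_k)u_ke^{mu_k^2}\,dy\to0$ (the integral tending to $0$ by $(H_{\ell,2})$, $(H_{\ell,4})$ exactly as in the proof of Lemma~\ref{key}, and $U_k\le u_k(s_k)$), while on $B_{\delta\rho_k}\bs B_{s_k}$ the above bounds give $u_k\ge u_k(\delta\rho_k)=U_k(1+o_k(1))$ for each fixed $\delta<1$, so the second piece equals $(1+o_k(1))N_k(s_k,\delta\rho_k)$, which tends to $0$ as $k\to\infty$ and then $\delta\to0$ by \eqref{eq33}. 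Since $\eta^{(\ell+1)}$ has at most logarithmic growth at $0$, the distribution $(-\Delta)^m\eta^{(\ell+1)}-\mu e^{2m\eta^{(\ell+1)}}$ on $B_1$ is a multiple of $\delta_0$, namely this limit times $\delta_0$, hence $0$; a standard potential-theoretic bootstrap then yields $\eta^{(\ell+1)}\in C^\infty(\R{2m})$ with $(-\Delta)^m\eta^{(\ell+1)}=\mu e^{2m\eta^{(\ell+1)}}$ on all of $\R{2m}$. Putting $c^{(\ell+1)}:=\tfrac1{2m}\log\tfrac{\mu}{(2m-1)!}$ and $\eta_0^{(\ell+1)}:=\eta^{(\ell+1)}+c^{(\ell+1)}$ normalises this to $(-\Delta)^m\eta_0^{(\ell+1)}=(2m-1)!e^{2m\eta_0^{(\ell+1)}}$ with finite total $Q$-curvature $V:=\int_{\R{2m}}(2m-1)!e^{2m\eta_0^{(\ell+1)}}\,dx$.

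It remains to prove $V=\Lambda_1$, which I expect to be the main obstacle. The function $\eta_0^{(\ell+1)}$ is radial, and the plan is to show it is a \emph{normal} solution, i.e.\ that its polyharmonic (polynomial) part vanishes; then a Pohozaev identity, as in the classifications of Chen--Li ($m=1$), Lin ($m=2$) and the first author \cite{mar1}, \cite{MP} (general $m$), forces $V=\Lambda_1$. Proving normality is where the Dirichlet conditions $\de_\nu^ju_k(R)=0$, $0\le j\le m-1$, must enter decisively: after rescaling they pin down the boundary polynomials $P_j$ of Lemma~\ref{lemma2} at the outer radius $R/\rho_k\to\infty$ and force the polynomial part of the limit to zero. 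This is the conceptually new input announced in the introduction---it replaces the sign estimate $\Delta u_k\le0$ available under Navier boundary conditions in \cite{str}, which is not at our disposal here. A secondary delicate point is the lower bound for $\eta_k$ on compact annuli used in the second step, for which the Pohozaev inequality \eqref{poho} and Lemma~\ref{key2} are the essential tools.
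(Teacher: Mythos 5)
Your proposal diverges from the paper's proof in two places, and both of them are genuine gaps rather than alternative routes.

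First, the passage to the $C^{2m-1}_{\loc}$ limit on $\R{2m}\setminus\{0\}$ is not established by the estimates you cite. You claim that the upper bound $\eta_k(x)\le C-\log|x|$ from \eqref{eq9}, the positivity $\eta_k\ge 0$ on $B_1$, and unspecified ``radial ODE identities / Pohozaev estimates'' give a two-sided bound on $\eta_k$ on compact annuli, so that the uniform bound on $\Delta^m\eta_k$ plus elliptic estimates close the argument. But none of these control the \emph{polyharmonic part} of $\eta_k$ on an annulus. For instance $\eta_k=N(1-|x|^2)$ (with $N\to\infty$) satisfies all three of your stated constraints on $B_L\setminus B_{1/L}$ for $m\ge 2$ yet is unbounded below; so your Step~2 does not close. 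The paper resolves exactly this difficulty with Lemma~\ref{lemma2.1} (showing $v_k(x)=u_k(r_k^{(\ell+1)}x)-u_k(r_k^{(\ell+1)})\to 0$ in $C^{2m-1}_{\loc}(\R{2m}\setminus\{0\})$, itself requiring a Liouville argument for polyharmonic $L^m$ functions and a $W^{2,m}$ scaling bound) and Lemma~\ref{lemma3} (an $L^1$ gradient bound on annuli). It is the $L^1$ bound on $\nabla\eta_k$, fed into the decomposition $\eta_k=h_k+l_k$ and Proposition~\ref{c2m}, that tames the polyharmonic part $h_k$. Those two lemmas are the new conceptual input and cannot be skipped.

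Second, your last step, proving $\int_{\R{2m}}(2m-1)!\,e^{2m\eta_0^{(\ell+1)}}dx=\Lambda_1$, is a plan and not a proof, and the plan itself is dubious. You propose to show $\eta_0^{(\ell+1)}$ is ``normal'' by pinning down the boundary polynomials $P_j$ at the rescaled outer radius $R/r_k^{(\ell+1)}\to\infty$. After rescaling, the Dirichlet data sits on $\partial B_{R/r_k^{(\ell+1)}}$, which escapes to infinity, so it imposes no direct condition on the limit $\eta^{(\ell+1)}$; nothing in Lemma~\ref{lemma2} or \eqref{stima4} forces the polynomial part of the limit to vanish. The paper instead invokes the classification of entire solutions from \cite{mar1}: either $\eta_0$ is the standard bubble (case (i), which has mass $\Lambda_1$), or $m>1$ and $\Delta^j\eta_0\to a\neq 0$ for some $1\le j\le m-1$ (case (ii)). Case (ii) is then excluded precisely by the $L^1$ gradient bound of Lemma~\ref{lemma3}, via the incompatibility \eqref{eqa}. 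This is again where Lemma~\ref{lemma3} is indispensable, so both gaps trace back to the same missing ingredient.

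The remainder of your argument (the rescaled equation, the upper bound from \eqref{eq9}, the removal of the singularity at the origin via $\lim_{\delta\to0}\lim_k\int_{B_\delta}\tilde f_k\,dx=0$ using \eqref{eq33} and $(H_{\ell,2})$, $(H_{\ell,4})$, and the normalisation $c^{(\ell+1)}=\tfrac{1}{2m}\log\tfrac{\mu}{(2m-1)!}$) matches the paper's Step~2 in spirit and is fine, granted the $C^{2m-1,\alpha}_{\loc}$ convergence that you have not actually secured.
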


The above proposition, which will be proven in the following section,  implies that
$$\lim_{L\to\infty}\lim_{k\to\infty}N_k(r_k^{(\ell+1)}/L,Lr_k^{(\ell+1)})=\Lambda_1;$$
hence \eqref{eq33} yields
\begin{equation}\label{3.3}
  \begin{split}
    & \lim_{L\to\infty}\lim_{k\to\infty} N_k(s_k^{(\ell)},Lr_k^{(\ell+1)})\\
    & \quad =\lim_{L\to\infty}\lim_{k\to\infty}N_k(s_k^{(\ell)},r_k^{(\ell+1)}/L)
    +\lim_{L\to\infty}\lim_{k\to\infty}N_k(r_k^{(\ell+1)}/L,Lr_k^{(\ell+1)})\\
    & \quad =0+\Lambda_1=\Lambda_1.
  \end{split}
\end{equation}
Then the inductive hypothesis $(H_{\ell,3})$ gives
\begin{eqnarray*}
\lim_{L\to\infty}\lim_{k\to\infty}\Lambda_k(Lr_k^{(\ell+1)})
&=&\lim_{L\to\infty}\lim_{k\to\infty}\Big(\Lambda_k(s_k^{(\ell)})
+N_k(s_k^{(\ell)},Lr_k^{(\ell+1)})\Big)\\
&=&(\ell+1)\Lambda_1.
\end{eqnarray*}
Now set $w_k^{(\ell+1)}(x)=u_k(r_k^{(\ell+1)})(u_k(x)-u_k(r_k^{(\ell+1)}))$ so that
$$(-\Delta)^m w_k^{(\ell+1)}=\lambda_ku_k(r_k^{(\ell+1)})u_ke^{mu_k^2}=:f_k^{(\ell+1)}.$$
Similar to Lemma \ref{lemma2} and with the same proof (except that instead of 
Theorem \ref{trm1} one needs to use Proposition \ref{blow}) we have

\begin{lemma}\label{lemma2bis} For any $0 < \varepsilon < 1$, 
letting $T_k^{(\ell+1)}(\varepsilon)>0$ be such that 
$u_k(T_k^{(\ell+1)})=\varepsilon u_k(r_k^{(\ell+1)})$, we have
\begin{equation}\label{eq39}
\lim_{k\to\infty}N_k(s_k^{(\ell)},T_k^{(\ell+1)})=\Lambda_1.
\end{equation}
Moreover $r_k^{(\ell+1)}/T_k^{(\ell+1)} \to 0$ as $k\to\infty$.
\end{lemma}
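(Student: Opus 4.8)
The plan is to rerun the proof of Lemma \ref{lemma2} step by step, using $r_k^{(\ell+1)}$, $w_k^{(\ell+1)}$, $f_k^{(\ell+1)}$ and $\sigma_k^{(\ell+1)}(r):=\int_{B_r}f_k^{(\ell+1)}dx$ in the roles of $r_k=r_k^{(1)}$, $w_k$, $f_k$ and $\sigma_k$, and Proposition \ref{blow} in the role of Theorem \ref{trm1}. The one genuinely new point is to produce a substitute for the normalization \eqref{etak3}. Passing to a subsequence, by \eqref{eq34} we have $P_k(r_k^{(\ell+1)})\to\mu$ for some $\mu>0$; writing $f_k^{(\ell+1)}(r_k^{(\ell+1)}y)(r_k^{(\ell+1)})^{2m}=\frac{u_k(r_k^{(\ell+1)})}{u_k(r_k^{(\ell+1)}y)}\lambda_ku_k^2(r_k^{(\ell+1)}y)e^{mu_k^2(r_k^{(\ell+1)}y)}(r_k^{(\ell+1)})^{2m}$ and using $u_k(r_k^{(\ell+1)})\to\infty$ together with the convergence $\eta_k^{(\ell+1)}\to\eta^{(\ell+1)}$ of Proposition \ref{blow}, this density — just like that of $e_k$ — converges to $\frac{\mu}{\omega_{2m-1}}e^{2m\eta^{(\ell+1)}(y)}$ locally uniformly on $\R{2m}\setminus\{0\}$. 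Since the remark following Proposition \ref{blow} gives $\lim_{L\to\infty}\lim_{k\to\infty}N_k(r_k^{(\ell+1)}/L,Lr_k^{(\ell+1)})=\Lambda_1$, letting $L\to\infty$ forces $\frac{\mu}{\omega_{2m-1}}\int_{\R{2m}}e^{2m\eta^{(\ell+1)}}dx=\Lambda_1$, and hence $\liminf_{k\to\infty}\sigma_k^{(\ell+1)}(cr_k^{(\ell+1)})\geq\frac{\mu}{\omega_{2m-1}}\int_{B_c\setminus B_{1/c}}e^{2m\eta^{(\ell+1)}}dx\to\Lambda_1$ as $c\to\infty$. This is all we shall need from the bubble in place of \eqref{etak3}.

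Since $\sigma_k^{(\ell+1)}$ is nondecreasing, this lower bound together with \eqref{parti5}--\eqref{parti6}, applied to $g_k^{(\ell+1)}$ (the solution of $\Delta^mg_k^{(\ell+1)}=\Delta^mw_k^{(\ell+1)}$ in $B_t$ with homogeneous Dirichlet data), yields the analogue of Lemma \ref{lemmaL}: for every $b<2$ there are $L=L(b)$ and $k_0$ such that $(-\partial_\nu)^mg_k^{(\ell+1)}(t)\geq\frac{2^{m-1}(m-1)!b}{t^m}$ on $Lr_k^{(\ell+1)}\leq t\leq R$, $k\geq k_0$. From this point Steps 1 and 2 of the proof of Lemma \ref{lemma2} carry over verbatim — the radial identity \eqref{ratta} and the induction \eqref{stima4} — giving $(w_k^{(\ell+1)})'(t)\leq -b/t+tP(t)$ on $Lr_k^{(\ell+1)}\leq t\leq R$ and, after one further integration using $w_k^{(\ell+1)}(Lr_k^{(\ell+1)})\to\eta^{(\ell+1)}(L)$, the bound $w_k^{(\ell+1)}(t)\leq b\log(r_k^{(\ell+1)}/t)+C$ on $B_R$. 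The monotonicity of $u_k$ — the analogue of \eqref{stima0.5} — is already at hand from Lemma \ref{lemma2}, so only these two estimates need be reproven.

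It then remains to run Step 3. By monotonicity $T_k^{(\ell+1)}>r_k^{(\ell+1)}$, so if $T_k^{(\ell+1)}/r_k^{(\ell+1)}\to L'\in[1,\infty)$ along a subsequence, then $\eta_k^{(\ell+1)}(T_k^{(\ell+1)}/r_k^{(\ell+1)})$ converges by Proposition \ref{blow}, whereas $u_k(r_k^{(\ell+1)})\big(u_k(T_k^{(\ell+1)})-u_k(r_k^{(\ell+1)})\big)=(\varepsilon-1)u_k^2(r_k^{(\ell+1)})\to-\infty$, a contradiction; hence $r_k^{(\ell+1)}/T_k^{(\ell+1)}\to0$. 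For \eqref{eq39} one argues as in the final estimate of Lemma \ref{lemma2}: on $B_{T_k^{(\ell+1)}}\setminus B_{Lr_k^{(\ell+1)}}$ one has $(\varepsilon-1)u_k^2(r_k^{(\ell+1)})\leq w_k^{(\ell+1)}(r)\leq0$, so, writing $u_k$ through $w_k^{(\ell+1)}$ and using $P_k(r_k^{(\ell+1)})\leq C$ together with the bound just obtained, $e_k(r)\leq f_k^{(\ell+1)}(r)\leq C(r_k^{(\ell+1)})^{-2m}(r_k^{(\ell+1)}/r)^{m(1+\varepsilon)b}$ there; choosing $b<2$ with $m(1+\varepsilon)b=2m+\varepsilon$ and integrating gives $\lim_{L\to\infty}\lim_{k\to\infty}N_k(Lr_k^{(\ell+1)},T_k^{(\ell+1)})=0$. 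Splitting $N_k(s_k^{(\ell)},T_k^{(\ell+1)})=N_k(s_k^{(\ell)},r_k^{(\ell+1)}/L)+N_k(r_k^{(\ell+1)}/L,Lr_k^{(\ell+1)})+N_k(Lr_k^{(\ell+1)},T_k^{(\ell+1)})$ and using \eqref{eq33} for the first summand, the remark after Proposition \ref{blow} for the middle one, and the above for the last, we get $\lim_{k\to\infty}N_k(s_k^{(\ell)},T_k^{(\ell+1)})=0+\Lambda_1+0=\Lambda_1$.

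I expect the first paragraph — the normalization — to be the only part that is not purely mechanical. At the first scale it comes for free, since $r_k^{(1)}$ is pinned down by \eqref{rik} and $u_k(0)=\max_\Omega u_k$ forces $\sigma_k^{(1)}\geq\Lambda_k$; at the $(\ell+1)$st scale $u_k(r_k^{(\ell+1)})$ is neither maximal nor tied to an explicit normalization — only $\liminf_kP_k(r_k^{(\ell+1)})>0$ is known a priori — and one must identify this limit with the mass $\Lambda_1$ of the bubble furnished by Proposition \ref{blow}, which is exactly why the blow-up analysis of Section \ref{sec3} has to be carried out first. The same point also explains the weight $u_k(r_k^{(\ell+1)})$ in the definition of $f_k^{(\ell+1)}$: it renders the mass of the $\ell$ inner concentrations invisible to $\sigma_k^{(\ell+1)}$, so that the normalization above reads $\Lambda_1$ rather than $(\ell+1)\Lambda_1$.
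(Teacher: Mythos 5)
The paper gives no separate proof of Lemma \ref{lemma2bis} beyond the one-line remark that it follows ``with the same proof'' as Lemma \ref{lemma2}, substituting Proposition \ref{blow} for Theorem \ref{trm1}; your proposal is a correct and faithful unpacking of exactly that argument, and you rightly isolate the one non-mechanical point, namely replacing the normalization \eqref{etak3} by the combination of \eqref{eq34} with the mass identity of Proposition \ref{blow} so that $\lim_{L\to\infty}\liminf_k\sigma_k^{(\ell+1)}(Lr_k^{(\ell+1)})\geq\Lambda_1$, which is what the analogue of Lemma \ref{lemmaL} requires. The only slip is the claim that $w_k^{(\ell+1)}(t)\leq b\log\big(r_k^{(\ell+1)}/t\big)+C$ holds on all of $B_R$ --- near $t=0$ one has $w_k^{(\ell+1)}\to+\infty$ since $u_k(0)$, not $u_k(r_k^{(\ell+1)})$, is the maximum --- but this is harmless because you only invoke the bound on $Lr_k^{(\ell+1)}\leq t\leq T_k^{(\ell+1)}$, where your integration of \eqref{stima4} does establish it.
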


According to Lemma \ref{lemma2bis} and \eqref{3.3} we can choose numbers $\varepsilon_k\to 0$ 
and a subsequence so that for
$s_k^{(\ell+1)}:=T_k^{(\ell+1)}(\varepsilon_k)$ we have
$u_k(s_k^{(\ell+1)}) \to \infty$ as $k \to \infty$ and
$$\lim_{k\to\infty}\frac{r_k^{(\ell+1)}}{s_k^{(\ell+1)}}=0,$$
while
$$\lim_{k\to\infty}\Lambda_k(s_k^{(\ell+1)})=(\ell+1)\Lambda_1, 
\quad \lim_{L\to\infty}\lim_{k\to\infty} N_k(Lr_k^{(\ell+1)},s_k^{(\ell+1)})=0.$$
Again reasoning as in the proof of \eqref{Lsk} we also infer
$$\lim_{k\to\infty}N_k(s_k^{(\ell+1)},Ls_k^{(\ell+1)})=0\quad 
\text{for every }L\geq 1.$$
Finally, observe that the definition of $s_k^{(\ell+1)}$ implies that 
$$\lim_{k\to\infty}\frac{u_k(s_k^{(\ell+1)})}{u_k(Lr_k^{(\ell+1)})}=0\quad 
\text{for every }L\geq 0.$$
Together with \eqref{eq33} this completes the proof of $(H_{\ell+1})$, 
and hence of Theorem \ref{trm2} in the radially symmetric case.

\subsection{Proof of Proposition \ref{blow}}
As preparation for the proof of Proposition \ref{blow} we need the following two lemmas.

\begin{lemma}\label{lemma2.1} For $r_k^{(\ell+1)}$ as above, we have
$$v_k(x):=u_k\big(r_k^{(\ell+1)}x\big)-u_k\big(r_k^{(\ell+1)}\big)\to 0\quad 
\textrm{in }C^{2m-1}_{\loc}(\R{2m}\backslash\{0\}).$$
\end{lemma}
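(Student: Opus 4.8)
The plan is to establish the $C^{2m-1}_{\loc}(\R{2m}\setminus\{0\})$-convergence $v_k\to 0$ by a standard bootstrap from an $L^\infty_{\loc}$-bound, using the fact that on annuli $\{L^{-1}\le |x|\le L\}$ away from the origin the right-hand side $f_k^{(\ell+1)}$ has small $L^1$-mass and that $v_k$ itself is bounded there. First I would record the scaling identity: writing $\tilde u_k(x):=u_k(r_k^{(\ell+1)}x)$, equation \eqref{eq0} becomes $(-\Delta)^m\tilde u_k=\lambda_k (r_k^{(\ell+1)})^{2m}\tilde u_k e^{m\tilde u_k^2}$ on the rescaled domain, which expands to $B_{R/r_k^{(\ell+1)}}$; since $r_k^{(\ell+1)}/R\to 0$ this exhausts all of $\R{2m}$. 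The driving quantity is $\mu_k:=\lambda_k(r_k^{(\ell+1)})^{2m}u_k^2(r_k^{(\ell+1)})e^{mu_k^2(r_k^{(\ell+1)})}=P_k(r_k^{(\ell+1)})/\omega_{2m-1}$, which by \eqref{eq34} is bounded away from $0$ and, by $N_k(\cdot)\le 2\Lambda$ together with \eqref{pkt}, also bounded above. Hence the total mass $\int_{B_{R/r_k^{(\ell+1)}}}\lambda_k(r_k^{(\ell+1)})^{2m}\tilde u_k^2 e^{m\tilde u_k^2}\,dx=\Lambda_k(R)\le 2\Lambda$ stays bounded.

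Next, on any fixed annulus $A_L:=\{L^{-1}\le|x|\le L\}$ I would show $\|v_k\|_{L^\infty(A_L)}\le C(L)$. Monotonicity of $u_k$ (from \eqref{stima0.5}) gives $v_k\le 0$ on $|x|\ge 1$ and $v_k\ge 0$ on $|x|\le 1$; combined with the estimate \eqref{stima1}-type bound $w_k^{(\ell+1)}(t)\le b\log(r_k^{(\ell+1)}/t)+C$ — which after dividing by $u_k(r_k^{(\ell+1)})\to\infty$ forces $v_k(x)=w_k^{(\ell+1)}(r_k^{(\ell+1)}x)/u_k(r_k^{(\ell+1)})\to 0$ pointwise from above on $|x|\ge L^{-1}$ once $k$ is large — and with the lower bound obtained from the Pohozaev-type control (or directly from the fact that $u_k$ near $s_k^{(\ell)}$ is comparable to $u_k(r_k^{(\ell+1)})$ by $(H_{\ell,2})$ failing to shrink too fast), one gets a uniform two-sided bound on $v_k$ on $A_L$. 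Then write $v_k=p_k+q_k$ on the ball $B_L$, where $(-\Delta)^m p_k=f_k^{(\ell+1)}(r_k^{(\ell+1)}\cdot)(r_k^{(\ell+1)})^{2m}$ with zero Dirichlet data on $\partial B_L$ and $(-\Delta)^m q_k=0$ in $B_L$. The polyharmonic Green representation gives $\|p_k\|_{L^q(B_L)}\le C(L,q)\|f_k^{(\ell+1)}\text{(rescaled)}\|_{L^1(B_L)}\le C(L,q)\Lambda$ for any $q<\infty$ (the Green kernel of $(-\Delta)^m$ on $B_L$ is in $L^q$ for all finite $q$ when $2m=n$, being log-type), hence $p_k$ is bounded in $W^{2m,q}_{\loc}$ and so in $C^{2m-1,\alpha}_{\loc}$ of the interior; meanwhile $q_k=v_k-p_k$ is polyharmonic and bounded in, say, $L^1(A_{L'})$ for $A_{L'}\subset\subset B_L$, hence bounded in $C^\infty$ on smaller annuli by interior estimates for polyharmonic functions. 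This yields $v_k$ bounded in $C^{2m-1,\alpha}_{\loc}(\R{2m}\setminus\{0\})$.

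Then I would upgrade convergence: by Arzelà–Ascoli a subsequence of $v_k$ converges in $C^{2m-1}_{\loc}(\R{2m}\setminus\{0\})$ to some limit $v_\infty$, and it remains to identify $v_\infty\equiv 0$. The key point is that $f_k^{(\ell+1)}(r_k^{(\ell+1)}\cdot)(r_k^{(\ell+1)})^{2m}=\mu_k\,\dfrac{\tilde u_k}{u_k(r_k^{(\ell+1)})}\,e^{2m(1+\frac{v_k}{2u_k(r_k^{(\ell+1)})})v_k}=\mu_k(1+v_k/u_k(r_k^{(\ell+1)}))e^{o(1)v_k}$, and since $u_k(r_k^{(\ell+1)})\to\infty$ while $v_k$ is locally bounded, on any compact $K\subset\R{2m}\setminus\{0\}$ this tends (after extracting) to $\mu_\infty>0$, a positive constant. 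Thus $(-\Delta)^m v_\infty=\mu_\infty$ in the sense that $v_\infty$ satisfies a polyharmonic equation with constant right-hand side away from $0$; but also $v_k(1)=0$ for all $k$, so $v_\infty(x)=0$ at $|x|=1$ in the radial variable, i.e. $v_\infty(e)=0$ for all unit $e$. The crucial extra input is that $v_k$ is actually \emph{uniformly bounded on all of $\R{2m}$} — not just locally away from $0$ — up to an additive constant: indeed $w_k^{(\ell+1)}$ is non-positive for $|x|\ge 1$ and controlled by $C$ after rescaling, so $v_k\le o(1)$ globally on $|x|\ge 1$; combined with the mass bound and the maximum principle for $(-\Delta)^m$ on annuli (here one must be careful, as it was noted in the paper that no maximum principle is directly available — instead one uses the integral representation via the Green function on large balls, controlling the harmonic part by the boundary values which go to $0$). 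A bounded polyharmonic-type function on $\R{2m}\setminus\{0\}$ solving $(-\Delta)^m v_\infty=\mu_\infty$ and vanishing on the unit sphere, together with boundedness near $0$ forcing removal of the singularity, must be radially a polynomial; boundedness then forces it constant, and vanishing on $|x|=1$ forces $v_\infty\equiv 0$. Since the limit is independent of the subsequence, $v_k\to 0$ in $C^{2m-1}_{\loc}(\R{2m}\setminus\{0\})$.

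The main obstacle I anticipate is precisely the step asserting the uniform $L^\infty$-bound (up to additive constants) for $v_k$ near the origin and at infinity in the rescaled picture, and more pointedly the removal-of-singularity / Liouville argument for $v_\infty$: because the paper stresses that the Dirichlet problem offers no maximum principle, one cannot simply say "$\Delta u_k\le 0$" as in the Navier case, and so controlling the polyharmonic correction $q_k$ and ruling out a nonzero polynomial limit requires the identities \eqref{parti5}--\eqref{ratta} and the radial integral gymnastics already developed in Lemma \ref{lemma2} — in effect re-running those monotonicity and decay estimates centered at $r_k^{(\ell+1)}$ rather than at $0$, which is legitimate because Proposition \ref{blow}'s hypotheses \eqref{eq32}--\eqref{eq34} play the role that \eqref{rik} and Theorem \ref{trm1} played at the first concentration point. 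Everything else (scaling, Green's function $L^q$ bounds, Arzelà–Ascoli, bootstrap) is routine.
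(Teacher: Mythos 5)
Your proposal contains a genuine error at its central step: you misidentify the limit of the equation satisfied by $v_k$. Writing $\tilde u_k(x)=u_k(r_k^{(\ell+1)}x)$, one has
$(-\Delta)^m v_k(x)=\lambda_k (r_k^{(\ell+1)})^{2m}\,\tilde u_k(x)\,e^{m\tilde u_k^2(x)}$. The quantity you display, namely $f_k^{(\ell+1)}(r_k^{(\ell+1)}\cdot)(r_k^{(\ell+1)})^{2m}=\lambda_k(r_k^{(\ell+1)})^{2m}u_k(r_k^{(\ell+1)})\tilde u_k\, e^{m\tilde u_k^2}$, carries an \emph{extra} factor $u_k(r_k^{(\ell+1)})$ and is the right-hand side of $(-\Delta)^m\eta_k$, not of $(-\Delta)^m v_k$. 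Since $u_k(r_k^{(\ell+1)})\to\infty$ while the $\eta_k$-equation's right-hand side is locally bounded (via \eqref{eq9}), the actual right-hand side $(-\Delta)^m v_k$ tends to $0$ in $L^\infty_{\loc}(\R{2m}\setminus\{0\})$. The limit therefore satisfies the polyharmonic equation $\Delta^m v_\infty=0$, not $(-\Delta)^m v_\infty=\mu_\infty>0$. This sign-of-the-limit issue is not cosmetic: it is precisely the structural observation that drives the lemma, and with the correct right-hand side the removal-of-singularity and classification steps become radically simpler (one is classifying a polyharmonic limit, not an inhomogeneous one).

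Even setting this aside, the bootstrap you propose (uniform $L^\infty$ bounds on dyadic annuli, followed by Green-function representation and a maximum-principle argument) differs from the paper's argument in a way that introduces further gaps. The paper never establishes any $L^\infty$ control of $v_k$; instead it exploits the two scale-invariant bounds $\|\nabla^2 v_k\|_{L^m(B_{R/r_k})}=\|\nabla^2 u_k\|_{L^m(B_R)}\le C$ and $\|\nabla^m v_k\|_{L^2}\le C$ (crucial because $2m=n$, so $\Delta v_k$ lands in the critical space $L^{n/2}$), passes to a weak limit of $w_k=\Delta v_k$, removes the singularity at $0$ by a capacity argument (Lemma \ref{lemcap}), applies the Liouville theorem for polyharmonic functions in $L^p(\R{2m})$ (Lemma \ref{trmliou}) to conclude $w\equiv 0$, and finally uses $v_k(1)=0$ and unique continuation for $\Delta v=0$ to obtain $v\equiv 0$. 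You yourself flag that the maximum-principle step is unavailable here — and indeed it is; the paper's capacity-plus-Liouville route is the mechanism that replaces it, and your proposal does not supply a substitute. Your global $L^\infty$ boundedness claim for $v_k$ also relies implicitly on decay estimates "centered at $r_k^{(\ell+1)}$" that are exactly what the later parts of the section are developing, so using them here risks circularity. In short: fix the limiting equation (right-hand side $\to 0$), and then the cleanest path is the paper's scale-invariant Sobolev/capacity/Liouville argument rather than an $L^\infty$-based bootstrap.
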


\begin{proof} We write $r_k=r_k^{(\ell+1)}$. Moreover, we consider only the case $m>1$,
the case $m=1$ being considerably easier. As in the proof of Lemma 3.2 in \cite{str}
we have
$$(-\Delta)^m v_k(x)=\lambda_k r_k^{2m}u_k(r_k x)e^{m u_k^2(r_kx)}=:g_k(x)\geq 0,$$
with $g_k\to 0$ in $L^\infty_{\loc}(\R{2m}\backslash\{0\})$. 
By scaling and Sobolev's embedding we also have
\begin{equation}\label{eq1.1}
  \begin{split}
   \|\nabla^2 v_k\|_{L^m(B_{R/r_k})}&=\|\nabla^2 u_k\|_{L^m(B_R)}\leq C,\\
  \|\nabla^m v_k\|_{L^2(B_{R/r_k})}&=\|\nabla^m u_k\|_{L^2(B_R)}\leq C.
  \end{split}
\end{equation}
Set $w_k:=\Delta v_k$. Then a subsequence $w_k\to w$ weakly in $H^{m-2}_{\loc} (\R{2m})$ 
and in $C^{2m-3,\alpha}_{\loc}(\R{2m}\setminus \{0\})$ for some function $w\in L^m(\R{2m})$ with
$\nabla^{m-2}w\in L^2(\R{2m})$. Clearly $\Delta^{m-1} w=0$ in $\R{2m}\backslash \{0\}$. In fact,
since the point $x=0$ has vanishing $H^m$-capacity, as in \cite{str} we have $\Delta^{m-1} w=0$ 
in $\R{2m}$. Recalling that $w\in L^m(\R{2m})$ we conclude that $w\equiv 0$; see 
Lemmas \ref{trmliou} and \ref{lemcap} in the appendix.

Recalling that $(\Delta v_k)$ is bounded in $L^m(\R{2m})$ and noting the condition $v_k(1)=0$, 
from standard elliptic estimates we infer that $(v_k)$ is bounded in $W^{2,m}(B_1)$.
Hence a subsequence $v_k\to v$ weakly in 
$W^{2,m}(B_1)$ and in $C^{2m-1,\alpha}$ away  from $x=0$. 
We then have $\Delta v=0$ and $v(1)=0$, therefore $v\equiv 0$ on $B_1$.

By elliptic estimates, from \eqref{eq1.1} and the condition $v_k(1)=0$ we also infer that $(v_k)$ 
is bounded in $W^{2,m}_{\loc}(\R{2m})$. Therefore, we also have that $v_k\to v$ weakly 
in $W^{2,m}_{\loc}(\R{2m})$ and in $C^{2m-1,\alpha}_{\loc}(\R{2m}\setminus \{0\})$, 
with $\Delta v=0$. 
By unique continuation it follows that $v\equiv 0$. This completes the proof.
\end{proof}

\begin{lemma}\label{lemma3} For any $L>0$ there exists $k_0=k_0(L)$ such that
for all $k\geq k_0$ and any $1\leq j\leq 2m-1$ there holds
$$u_k(r_k^{(\ell+1)})\int_{B_{Lr_k^{(\ell+1)}}\setminus B_{r_k^{(\ell+1)}/L}}
|\nabla^j u_k|dx\leq C(Lr_k^{(\ell+1)})^{2m-j}.$$
\end{lemma}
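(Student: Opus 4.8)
The plan is to combine the pointwise decay of $v_k$ (Lemma~\ref{lemma2.1}) with interior elliptic estimates applied on dyadic annuli, where the right-hand side $g_k=\lambda_k (r_k^{(\ell+1)})^{2m}u_k(r_k^{(\ell+1)}x)e^{mu_k^2(r_k^{(\ell+1)}x)}$ is under control. Write $r_k=r_k^{(\ell+1)}$ throughout and recall from Lemma~\ref{lemma2.1} that $(-\Delta)^m v_k=g_k\ge 0$ with $g_k\to 0$ in $L^\infty_{\loc}(\R{2m}\setminus\{0\})$ and $v_k\to 0$ in $C^{2m-1}_{\loc}(\R{2m}\setminus\{0\})$; the latter in particular gives $\|\nabla^j v_k\|_{L^\infty(B_{2L}\setminus B_{1/(2L)})}\to 0$ for every $1\le j\le 2m-1$. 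Rescaling, the assertion to be proven is equivalent to
$$
u_k(r_k)\,r_k^{\,2m-j}\int_{B_{2L}\setminus B_{1/(2L)}}|\nabla^j v_k|\,dx\le C r_k^{\,2m-j}L^{2m-j},
$$
i.e.\ it suffices to show $u_k(r_k)\,\|\nabla^j v_k\|_{L^1(B_{2L}\setminus B_{1/(2L)})}\le C(L)$, since $u_k(r_k)\to\infty$ so the naive bound from $C^{2m-1}$ convergence alone does not suffice. The point is thus to gain an extra factor $1/u_k(r_k)$ over the crude estimate.

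The mechanism for this gain is the equation itself. On the annulus $A:=B_{2L}\setminus B_{1/(2L)}$ we estimate $\int_A g_k\,dx$: after undoing the scaling, $\int_A g_k\,dx=N_k(r_k/(2L),2Lr_k)$, which by Proposition~\ref{blow} (or just by \eqref{eq1} and the fact that $\Lambda_k$ is bounded) is $\le 2\Lambda$ for large $k$. More importantly, $g_k=\lambda_k r_k^{2m}u_k(r_kx)e^{mu_k^2(r_kx)}$, and on $A$ we have $u_k(r_kx)=u_k(r_k)+v_k(x)$ with $|v_k|\le 1$ on $A$ for large $k$ (by the $C^0$ convergence), hence $g_k(x)\le \frac{C}{u_k(r_k)}\cdot\big(u_k(r_k)u_k(r_kx)\lambda_k r_k^{2m}e^{mu_k^2(r_kx)}\big)=\frac{C}{u_k(r_k)}\,\tilde f_k(x)$ where $\int_A\tilde f_k\,dx$ is exactly a rescaled piece of $\int f_k^{(\ell+1)}$, hence bounded by $C\Lambda$. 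Therefore
$$
u_k(r_k)\int_A g_k\,dx\le C(\Lambda).
$$
Now split $v_k=\phi_k+\psi_k$ on a fixed annulus $A':=B_{3L}\setminus B_{1/(3L)}\supset\supset A$, where $(-\Delta)^m\phi_k=g_k$ in $A'$ with zero Cauchy data on $\partial A'$, and $(-\Delta)^m\psi_k=0$ in $A'$. For $\phi_k$, iterated Calderón–Zygmund / potential estimates for the polyharmonic operator give $\|\nabla^j\phi_k\|_{L^1(A)}\le C(L)\|g_k\|_{L^1(A')}$ for $1\le j\le 2m-1$ (one loses nothing at the endpoint because we only ask for an $L^1$ bound on $\nabla^j\phi_k$, $j\le 2m-1$, against an $L^1$ datum — this is the standard gain of $2m$ derivatives minus one in integrability terms, which is admissible in one dimension below the critical one; alternatively bound $\|\nabla^j\phi_k\|_{L^1(A)}$ via the Green representation of $(-\Delta)^m$ on $A'$ and Young's inequality since the relevant Riesz kernels $|x-y|^{j-2m}$ and their derivatives up to order $2m-1$ are locally integrable). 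Combined with the display above this yields $u_k(r_k)\|\nabla^j\phi_k\|_{L^1(A)}\le C(L,\Lambda)$.

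For the harmonic part $\psi_k=v_k-\phi_k$: on $A'$, $\psi_k$ is polyharmonic, and on the slightly larger annulus it satisfies $\|\psi_k\|_{L^\infty(A')}\le \|v_k\|_{L^\infty(A')}+\|\phi_k\|_{L^\infty(A')}\to 0$ (using $v_k\to0$ in $C^0$ on $A'$ and $\|\phi_k\|_{L^\infty}\le C\|g_k\|_{L^1}/u_k(r_k)\cdot u_k(r_k)$... — here one must be slightly careful: what we actually need is $u_k(r_k)\|\psi_k\|_{L^\infty(A'')}\le C$ on an intermediate annulus $A''$). To get this, note $\psi_k=v_k-\phi_k$, so $u_k(r_k)\psi_k=u_k(r_k)v_k-u_k(r_k)\phi_k$; the second term is $L^\infty$-bounded on $A$ by the $\phi_k$ estimate, and the first term, $u_k(r_k)v_k(x)=u_k(r_k)(u_k(r_kx)-u_k(r_k))=w_k^{(\ell+1)}(r_kx)$, is precisely the (rescaled) blow-up sequence of Proposition~\ref{blow}, which converges in $C^{2m-1}_{\loc}(\R{2m}\setminus\{0\})$ and hence is bounded in $C^{2m-1}(A)$, in particular in $L^\infty(A)$. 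Thus $u_k(r_k)\psi_k$ is bounded in $L^\infty(A)$; interior estimates for the polyharmonic equation $(-\Delta)^m(u_k(r_k)\psi_k)=0$ on $A$ then give $u_k(r_k)\|\nabla^j\psi_k\|_{L^\infty(A_{\mathrm{inner}})}\le C(L)$ for every $j$, where $A_{\mathrm{inner}}=B_{2L}\setminus B_{1/(2L)}$ is the annulus in the statement (shrinking the outer/inner radii a harmless fixed amount, which one absorbs by choosing the split on $B_{3L}\setminus B_{1/(3L)}$ from the start). Adding the two contributions and integrating over the bounded annulus gives $u_k(r_k)\int_{A_{\mathrm{inner}}}|\nabla^j v_k|\,dx\le C(L,\Lambda)$, which after rescaling back is exactly the claim with $C=C(L,\Lambda)$ — and since $L$ is fixed we may write $C(Lr_k)^{2m-j}$ as in the statement.

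The main obstacle is the bookkeeping in the second step: one must verify that $u_k(r_k)\,g_k$ has bounded $L^1$ norm on the annulus \emph{uniformly in $k$} despite $u_k(r_k)\to\infty$, and this hinges on the elementary but crucial observation $u_k(r_kx)e^{mu_k^2(r_kx)}\le C\,u_k(r_k)u_k(r_kx)e^{mu_k^2(r_kx)}$ on $A$ — i.e.\ that multiplying by $u_k(r_k)$ merely converts the density $e_k$ into the density $f_k^{(\ell+1)}$, both of which carry bounded mass. The remaining technical care is to choose the annulus on which the Dirichlet split is performed slightly larger than the target annulus so that all interior elliptic estimates for the polyharmonic operator apply with constants depending only on $L$.
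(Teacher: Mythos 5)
Your decomposition $v_k=\phi_k+\psi_k$ on a fixed rescaled annulus handles the inhomogeneous piece $\phi_k$ correctly; in fact, by \eqref{eq9} as exploited in \eqref{2.1}, $u_k(r_k)g_k$ is bounded in $L^\infty$ on the annulus, so $u_k(r_k)\phi_k$ is bounded in $C^{2m-1,\alpha}$ there without any need for $L^1$ Calder\'on--Zygmund theory. The serious problem is the polyharmonic remainder $\psi_k$: to bound $u_k(r_k)\psi_k$ you write it as $\eta_k^{(\ell+1)}-u_k(r_k)\phi_k$ and invoke the $C^{2m-1}_{\loc}(\R{2m}\setminus\{0\})$ convergence of $\eta_k^{(\ell+1)}$ from Proposition \ref{blow}. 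This is circular. Proposition \ref{blow} is proved \emph{using} Lemma \ref{lemma3}: in Step 1 of its proof, Lemma \ref{lemma3} is exactly what gives $\|\nabla\eta_k\|_{L^1(\Omega_{2L})}\le C$, which together with the Navier estimate for $l_k$ yields $\|\nabla h_k\|_{L^1}\le C$ and then, via Proposition \ref{c2m}, the $C^j$ bounds on the polyharmonic part $h_k$. This is why the paper states that Lemma \ref{lemma2.1} and Lemma \ref{lemma3} are ``preparation for the proof of Proposition \ref{blow}''. Invoking Proposition \ref{blow} to prove Lemma \ref{lemma3} assumes the conclusion.

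The gap is structural rather than cosmetic. By splitting $v_k$ on an annulus at scale $r_k$ one is left with a polyharmonic complement $\psi_k$ whose size relative to $1/u_k(r_k)$ is precisely the unknown that Lemma \ref{lemma3} is meant to supply, and no interior elliptic estimate on the annulus can recover it for free. What you have actually shown is the (true) implication ``Proposition \ref{blow} $\Rightarrow$ Lemma \ref{lemma3}'', whereas the logical order required is the converse. A non-circular proof must obtain the $L^1$ bound on $u_k(r_k)\nabla^j u_k$ without first knowing that $\eta_k^{(\ell+1)}$ converges; the proof cited from Lemma 6 of \cite{mar2} does this by representing $u_k$ through the Green's function of $(-\Delta)^m$ on the whole ball $B_R$ (so that the Dirichlet boundary conditions of $u_k$ eliminate any free polyharmonic summand) and then estimating the resulting Riesz-type integrals of $f_k^{(\ell+1)}=\lambda_k u_k(r_k^{(\ell+1)})u_ke^{mu_k^2}$, using the decay estimates of Lemma \ref{lemma2} together with Lemma \ref{lemma2.1}.
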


\begin{proof} The proof is identical to the proof of Lemma 6 in \cite{mar2}, using 
Lemma \ref{lemma2.1} above instead of Lemma 3 in \cite{mar2}.
\end{proof}

\medskip

\noindent\emph{Proof of Proposition \ref{blow}.} For simplicity of notation, we now drop the 
index ${\ell}+1$.

\medskip

\noindent\emph{Step 1.} We claim that $\eta_k\to \eta$ in 
$C^{2m-1,\alpha}_{\loc}(\R{2m}\backslash \{0\})$ for some smooth function $\eta$. 
For any $L>1$ let $\Omega_L:=B_L(0)\backslash B_{1/L}(0)$. Recall that by 
Lemma \ref{lemma2.1} we have
$\overline u_k(x):=\frac{u_k(r_k x)}{u_k(r_k)}\to 1$ uniformly on $\Omega_L$ 
as $k\to \infty$. Thus by \eqref{eq9} with error $o(1) \to 0$ as $k\to \infty$ we have
\begin{equation}\label{2.1}
  \begin{split}
      0 & \le (-\Delta)^m \eta_k(x) =\lambda_k r_k^{2m}u_k^2(r_k)
      \overline u_k(x)e^{m u_k^2(r_k x)}\\
      &  \le (L^{2m}+o(1))\lambda_k (r_k|x|)^{2m}u_k^2(r_kx)
      e^{m u_k^2(r_k x)} \le CL^{2m}+o(1).
  \end{split}
\end{equation}
Split $\eta_k=h_k+l_k$ on $\Omega_{2L}$, where 
$$\Delta^m h_k=0\ \textrm{ on }\Omega_{2L}, \ \text{ and }
l_k=\Delta l_k=\ldots=\Delta^{m-1}l_k=0\ \textrm{ on } \partial \Omega_{2L}.$$
Since $\|\Delta^m\eta_k\|_{L^\infty(\Omega_{2L})}\leq C=C(L)$, by elliptic estimates we get
that $l_k\to l$ in $C^{2m-1,\alpha}(\Omega_{2L})$. Together with Lemma \ref{lemma3} this
implies
$$\|\nabla h_k\|_{L^1(\Omega_{2L})}
\leq \|\nabla l_k\|_{L^1(\Omega_{2L})}+\|\nabla \eta_k\|_{L^1(\Omega_{2L})}\leq C.$$
Moreover, since $\eta_k=0$ on $\partial B_1(0)$, we have
\begin{equation}\label{eq4.1}
|h_k|=|l_k|\leq C \quad\textrm{ on } \partial B_1(0).
\end{equation}
Then, from a Poincar\'e-type inequality, we easily get $\|h_k\|_{L^1(\Omega_{2L})}\leq C$.
By virtue of Proposition \ref{c2m}, we infer that
$$\|h_k\|_{C^j(\Omega_L)}\leq C_j\quad \textrm{for every } j\in \mathbb{N}.$$
Hence a subsequence $h_k\to h$ smoothly on $\Omega_L$, and
$$\eta_k\to \eta:=h+l \quad \textrm{in } C^{2m-1,\alpha}(\Omega_L),$$
proving our claim.

\medskip

\noindent \emph{Step 2.} With $\overline u_k(x):=\frac{u_k(r_k x)}{u_k(r_k)}$ as above, from 
\eqref{2.1} we get
\begin{equation}\label{2.2}
  \begin{split}
(-\Delta)^m \eta_k&=\lambda_k r_k^{2m}u_k^2(r_k)e^{m u_k^2(r_k)}
\overline u_k(x)e^{m(u_k^2(r_k \,\cdot\,)-u_k^2(r_k))}\\
&=\mu_k \overline u_k e^{m(\overline u_k +1)\eta_k}, 
  \end{split}
\end{equation}
where by \eqref{eq34} we may assume
$$\mu_k:=\lambda_k r_k^{2m} u_k^2(r_k)e^{m u_k^2(r_k)}=\omega_{2m-1}^{-1}P_k(r_k)\to \mu_0>0.$$
Since $\overline u_k\to 1$ locally uniformly on 
$\R{2m}\backslash \{0\}$ we may pass to the limit $k\to \infty$ in \eqref{2.2}
to see that $\eta$ solves the equation
\begin{equation}\label{eq5.1}
(-\Delta)^m \eta=\mu_0 e^{2m\eta} \quad \textrm{on }\R{2m}\backslash\{0\}
\end{equation}
in the distribution sense. In fact, we now show that \eqref{eq5.1} 
holds on all of $\R{2m}$. 
Note that by Step 1 for any $L>1$ we have
\begin{eqnarray*}
\int_{\Omega_L} e^{2m\eta}dx &=&
\lim_{k\to\infty}\int_{\Omega_L}  \overline u_k^2e^{m(\overline u_k +1)\eta_k}dx\\
&=&\lim_{k\to\infty} \int_{\Omega_L}\mu_k^{-1}\overline u_k (-\Delta)^m\eta_kdx\\
&\leq & \mu_0^{-1}\liminf_{k\to\infty}\int_{B_{Lr_k}} u_k(-\Delta)^m u_k dx\leq \mu_0^{-1}\Lambda. 
\end{eqnarray*}
As $L\to \infty$, by Fatou's lemma, we get $e^{2m\eta}\in L^1(\R{2m})$. Moreover 
$\eta\geq 0$ on $B_1$, hence $\eta\in L^p(B_1)$ for every $p\in [1,\infty[$.
Also note that $(-\Delta)^m \eta_k\ge 0$ and that from \eqref{3.1} we can bound
\begin{equation}\label{3.4}
  \begin{split} 
   & \limsup_{k\to \infty} \int_{B_{1/L}(0)} (-\Delta)^m \eta_k\; dx \\ 
   & \quad = \limsup_{k\to \infty} \int_{B_{r_k/L}(0)} 
     \lambda_k u_k(r_k) u_k e^{mu_k^2} dx 
   \le \limsup_{k\to \infty} N_k(s_k^{(\ell)},r_k/L) \rightarrow 0
  \end{split} 
\end{equation}
as $L \to \infty$. Since by Lemma \ref{lemma2} we have $\overline u_k \ge 1$, 
$\eta_k \ge 0$ on $B_1$, from \eqref{2.2} and \eqref{3.4} we also find that 
\begin{equation}\label{3.4a}
 \limsup_{k\to \infty} \int_{B_{1/L}(0)} \eta_k\; dx  \rightarrow 0 \text{ as } L \to \infty\; . 
\end{equation}
By \eqref{eq5.1} and \eqref{3.4} for any test function 
$\varphi \in C^{\infty}_0(\R{2m})$ we now obtain 
\begin{equation}\label{3.5}
  \begin{split} 
   \int_{\R{2m}}& \big((-\Delta)^m \eta-\mu_0e^{2m\eta}\big)\varphi\; dx
   = \lim_{L\to \infty}\int_{\R{2m}}(-\Delta)^m \eta\varphi\tau_L\; dx \\
   & = \lim_{L\to \infty} \liminf_{k\to \infty}\int_{\R{2m}}
   \big((-\Delta)^m \eta-(-\Delta)^m \eta_k\big)\varphi\tau_L\; dx, 
  \end{split} 
\end{equation}
where for $L \in \N$ we let $\tau_L(x)=\tau(Lx)$ with a fixed cut-off function
$\tau\in C^{\infty}_0(B_2)$ such that $0\le \tau \le 1$ 
and $\tau\equiv 1$ in $B_1$. But by Step 1 for any $L\ge 1$ we have 
\begin{equation*}
  \begin{split} 
   \liminf_{k\to \infty}& \int_{\R{2m}}
   \big((-\Delta)^m \eta-(-\Delta)^m \eta_k\big)\varphi\tau_L\; dx\\
   & =  \liminf_{k\to \infty}\int_{\R{2m}}
   (\eta-\eta_k)\big((-\Delta)^m \varphi\big)\tau_L\; dx,
  \end{split} 
\end{equation*}
and since $\eta\in L^1(B_1)$ and on account of \eqref{3.4a} the latter converges to $0$ as $L \to \infty$ for any fixed $\varphi \in C^{\infty}_0(\R{2m})$.
From \eqref{3.5} we thus see that $\eta$ solves \eqref{eq5.1} in the distribution sense on 
$\R{2m}$. By elliptic estimates, $\eta$ is smooth on all of $\R{2m}$; 
see for instance \cite{mar1}, Corollary 8. The function 
$\eta_0:=\eta+\frac{1}{2m}\log\frac{\mu_0}{(2m-1)!}$ then satisfies 
\begin{equation}\label{eq5.2}
(-\Delta)^m \eta_0=(2m-1)! e^{2m\eta_0}\quad \textrm{in }\R{2m},
\quad \int_{\R{2m}}e^{2m\eta_0}dx<\infty.
\end{equation}
Solutions to \eqref{eq5.2} have been classified in \cite{mar1}, where it was shown that either

\medskip

\noindent
(i) $\eta_0(x)=\log\frac{2\sigma}{1+\sigma|x-x_0|^2}$ for some $\sigma>0$, $x_0\in\R{2m}$, or

\medskip

\noindent
(ii) $m>1$ and there exist $1\leq j\leq m-1$ and $a\neq 0$ such that
\begin{equation*}
\lim_{|x|\to \infty}\Delta^j \eta_0 (x)=a,
\end{equation*}
and hence for sufficiently large $L$, with error $o(1) \to 0$ as $k \to \infty$,  
\begin{equation}\label{eqa}
\begin{split}
   (Lr_k)^{2j-2m} & u_k(r_k)\int_{B_{Lr_k}\setminus B_{r_k/L}}|\nabla^{2j} u_k|dx = 
   L^{2j-2m}\int_{B_L\setminus B_{1/L}}|\nabla^{2j}\eta_k|dx \\
   & = L^{2j-2m}\int_{B_L\setminus B_{1/L}}|\nabla^{2j}\eta_0|dx + o(1) \geq CL^{2j} + o(1)
\end{split}
\end{equation}
for some constant $C>0$ independent of $L$.

But \eqref{eqa} is incompatible with the estimate of Lemma \ref{lemma3} when $L$ and
$k$ are large. Hence case (i) occurs (with $x_0=0$, by radial symmetry). 
In particular, we have $\int_{\R{2m}}(2m-1)!e^{2m\eta_0}dx=\Lambda_1$. 
\hfill$\square$

\section{The general case}\label{sec3}

The following gradient bound analogous to \cite{dru}, Proposition 2, 
and generalizing \cite{str}, Proposition 4.1,
will be crucial in the sequel. The proof will be given in the next
section.

\begin{prop}\label{grad} There exists a uniform constant $C$ such that
$$\sup_{x\in\Omega}\inf_{1\leq j\leq I}|x-x_k^{(j)}|^\ell u_k(x)|\nabla^\ell u_k(x)| 
\leq C \ \textrm{ for all } 1\leq \ell\leq 2m-1,\ k \in \N.$$
\end{prop}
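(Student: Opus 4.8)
\textbf{Plan for the proof of Proposition \ref{grad}.}
The plan is to argue by contradiction, following the rescaling scheme of \cite{dru}, Proposition 2, and \cite{str}, Proposition 4.1. Suppose the claim fails for some $\ell$ with $1\le \ell\le 2m-1$; then there exist a subsequence and points $y_k\in\Omega$ with
$$L_k:=\inf_{1\le j\le I}|y_k-x_k^{(j)}|^\ell u_k(y_k)|\nabla^\ell u_k(y_k)|\to\infty.$$
Let $d_k:=\inf_j|y_k-x_k^{(j)}|$. By the local convergence \eqref{etak} near each $x_k^{(j)}$, the supremum defining $L_k$ cannot be attained on the rescaled balls $B_{Lr_k^{(j)}}(x_k^{(j)})$, so $d_k/r_k^{(j)}\to\infty$ for every $j$ after possibly relabelling; also $u_k\to 0$ away from $S$ by Theorem \ref{trm1}, so $d_k\to 0$. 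The first key step is a standard ``point-selection'' or ``$\max$-choice'' argument on the ball $B_{d_k/2}(y_k)$ (which avoids all concentration points): one replaces $y_k$ by a point where a suitable weighted quantity is almost maximal, so that after rescaling by $d_k$ the rescaled gradient norm is controlled, uniformly, on a large ball, while staying $\to\infty$ at the centre.

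The second step is the blow-up. Set $\tilde u_k(x):=u_k(y_k)\big(u_k(y_k+\rho_k x)-u_k(y_k)\big)$ for an appropriate scale $\rho_k$ (roughly $\rho_k^\ell\sim |\nabla^\ell u_k(y_k)|^{-1}u_k(y_k)^{-1}\,$, chosen so that $|\nabla^\ell\tilde u_k|$ has size $1$ at the origin). As in Lemma \ref{lemma2.1} and \eqref{2.1}, the equation \eqref{eq0} gives
$$(-\Delta)^m\tilde u_k=\lambda_k\rho_k^{2m}u_k(y_k)u_k(y_k+\rho_k\cdot)e^{mu_k^2(y_k+\rho_k\cdot)},$$
and one shows, using \eqref{eq9} and the fact that the rescaled points stay at infinite distance, that the right-hand side tends to $0$ in $L^\infty_{\loc}$, while $\tilde u_k$ is bounded in suitable local Sobolev norms. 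Passing to the limit (exactly as in the proof of Lemma \ref{lemma2.1}, using the vanishing $H^m$-capacity of points together with Lemmas \ref{trmliou}, \ref{lemcap} and Proposition \ref{c2m} from the Appendix) one obtains $\tilde u_k\to\tilde u$ in $C^{2m-1}_{\loc}$, where $\tilde u$ is polyharmonic, of controlled growth, and hence constant. This forces $|\nabla^\ell\tilde u_k(0)|\to 0$, contradicting the normalisation $|\nabla^\ell\tilde u_k(0)|=1$.

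The main obstacle is the correct \emph{choice of the rescaling scale and of the selected point}: one must arrange simultaneously that (a) all concentration points $x_k^{(j)}$ escape to infinity after rescaling, so the limiting equation is homogeneous; (b) the rescaled functions satisfy the uniform gradient bound $\sup|\nabla^\ell\tilde u_k|\le C$ on arbitrarily large balls, which is what makes the elliptic limiting argument go through; and (c) the normalisation at the origin is preserved. In the second-order case \cite{str} this balancing is comparatively transparent because $\Delta u_k$ has a sign; here, as emphasised in the introduction, no such sign is available, so one must instead run the point-selection purely in terms of the weighted gradient $|x-x_k^{(j)}|^\ell u_k|\nabla^\ell u_k|$ and feed the resulting bounds into the higher-order elliptic estimates. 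A secondary technical point is to handle all orders $1\le\ell\le 2m-1$ at once (or by a downward/upward induction on $\ell$), since the natural scale depends on $\ell$ and interpolation between the different $|\nabla^j u_k|$ is needed — this is where Lemma \ref{lemma3}-type estimates and the Appendix results on polyharmonic functions enter.
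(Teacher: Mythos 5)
Your broad strategy (contradiction, select an almost-maximal point, rescale, pass to a polyharmonic limit) matches the paper's, but the choice of rescaling scale is where the argument breaks, and it breaks in an essential way. You propose $\rho_k$ with $\rho_k^\ell\sim\big(u_k(y_k)|\nabla^\ell u_k(y_k)|\big)^{-1}$, i.e.\ the scale at which the normalisation $|\nabla^\ell\tilde u_k(0)|=1$ holds. Since $L_k=(R_k(y_k)/\rho_k)^\ell\to\infty$ this forces $\rho_k\ll R_k(y_k)$, so after rescaling \emph{all} concentration points recede to infinity and one only retains the local bound $|\nabla^\ell\tilde u_k|\le 1+o(1)$ on compact sets, with no decay at infinity. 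The limit $\tilde u$ is then entire, $\Delta^m\tilde u=0$, and grows at most like $|y|^\ell$ with $\ell\le 2m-1$, so by Liouville it is a polynomial of degree at most $\ell$. This is \emph{not} a contradiction: the polynomial $\tilde u(y)=y_1^\ell/\ell!$, for instance, is polyharmonic, vanishes at $0$, and has $|\nabla^\ell\tilde u(0)|=1$. Your claim ``polyharmonic, of controlled growth, and hence constant'' is false in exactly the regime you are in, and the contradiction does not materialise.

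The paper instead rescales at $s_k=R_k(y_k)=\dist(y_k,\{x_k^{(j)}\})$, so the rescaled concentration points stay at distance exactly $1$ from the origin. The maximality of $y_k$ then yields the \emph{quantitative pointwise decay}
$$|\nabla\tilde v_k(y)|\le\frac{1+o(1)}{\dist(y,S_0)}\qquad(\ell=1),$$
which kills any nonzero polyharmonic limit: a polynomial with gradient tending to $0$ at infinity is constant, contradicting $|\nabla\tilde v(0)|=1$. The higher cases $2\le\ell\le 2m-1$ are then \emph{reduced} to the case $\ell=1$ (not handled by an up/down induction): once $\dist(y,S_k)\,v_k|\nabla v_k|\le C$ is known, the rescaled gradient $|\nabla\tilde v_k|\le C/(L_k\dist(y,S_0))\to 0$, so $\nabla\tilde v\equiv 0$, which contradicts $|\nabla^\ell\tilde v(0)|=1$ directly. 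Your choice of the smaller scale $\rho_k$ discards exactly the decay mechanism that makes the limiting function trivial.

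Two smaller points. First, at the paper's scale the set $S_0$ is nonempty and the equation $\Delta^m\tilde v=0$ holds a priori only on $\R{2m}\setminus S_0$; extending it across $S_0$ requires the cut-off and integration-by-parts estimate showing $\Delta^m\tilde v_k\to 0$ in $L^1_{\loc}(\R{2m})$, together with the capacity Lemma \ref{lemcap}. Your plan omits this step (and it would not arise at your scale, but only because you have already lost the decay). Second, Lemma \ref{lemma3} plays no role in the proof of Proposition \ref{grad}; the relevant appendix ingredients are Lemma \ref{trmliou} (Pizzetti/Liouville) and Proposition \ref{c2m}.
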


Fix an index $i \in \{1,\dots, I\}$ 
and let $x_k=x_k^{(i)}\to x^{(i)}$, $r_k=r_k^{(i)}\to 0$ as given by 
Theorem \ref{trm1}. After a translation we may assume that $x^{(i)}=0$. Set as before
$$e_k:=\lambda_k u_k^2 e^{mu_k^2},\quad f_k:=\lambda_ku_k(0)u_ke^{mu_k^2},$$
and
$$\Lambda_k(r):=\int_{B_r}e_kdx.$$
In the following we will use the notation
$$\bar f(r):=\Intm_{\partial B_r}fd\sigma,$$
for any function $f$. Set also
$$\tilde e_k:=\lambda_k\bar u_k^2e^{m\bar u_k^2}\leq \bar e_k.$$
(Here we used Jensen's inequality.) Again we let $w_k(x):=u_k(0)(u_k(x)-u_k(0))$,
satisfying
$$(-\Delta)^m\bar w_k=\lambda_k u_k(0)\overline{u_ke^{mu_k^2}}= \bar f_k .$$
Finally set
\begin{equation}\label{sigmakbis}
\tilde \Lambda_k(r):=\int_{B_r}\tilde e_kdx\leq \Lambda_k(r),\ \sigma_k(r):=\int_{B_r}\bar f_kdx.
\end{equation}
Again Theorem \ref{trm1} implies
\begin{equation}\label{56}
\lim_{L\to\infty}\lim_{k\to\infty}\tilde\Lambda_k(Lr_k)
=\lim_{L\to\infty}\lim_{k\to\infty}\Lambda_k(Lr_k)
= \lim_{L\to\infty}\lim_{k\to\infty}\sigma_k(Lr_k)=\Lambda_1.
\end{equation}

Recalling that $x_k^{(i)}=0$ we let 
$$\rho_k=\rho_k^{(i)}:=\min\big\{\inf_{j\neq i}\frac{|x_k^{(j)}|}{2},\dist(0,\partial \Omega_k)\big\};$$ 
that is, we set $\rho_k = \dist(0,\partial \Omega_k)$ if the $(x^{(i)}_k)$ are the only
concentration points.
Observe that by Theorem \ref{trm1} we have $r_k=o(\rho_k)$ as $k \to \infty$.

Note that Proposition \ref{grad} implies the uniform bound 
\begin{equation}\label{eq57}
0\leq\sup_{r/2\le |x|\le r}u_k^2(x)-\inf_{r/2\le |x|\le r}u_k^2(x)\leq 
Cr\sup_{|x|=r}|\nabla u_k^2(x)| \leq C
\end{equation}
for $0\leq r\leq \rho_k$.

\begin{lemma}\label{lemma2ter} 
Let $0 < \ve < 1$ and assume that for $k\geq k_0=k_0(\ve)$ there holds
$$\inf_{0\leq r\leq \rho_k}\bar u_k(r)\leq \frac{\ve \bar u_k(0)}{2}.$$
Let $T_k=T_k(\ve) \le S_k=S_k(\ve)\in]0,\rho_k]$ be the smallest numbers
such that $\bar u_k(T_k)=\ve u_k(0)$, $\bar u_k(S_k)=\ve u_k(0)/2$, respectively. 
Then
\begin{equation}\label{tk}
\lim_{k\to\infty}\frac{r_k}{T_k}=\lim_{k\to\infty}\frac{T_k}{S_k}=0.
\end{equation}
Moreover for any $b<2$ and $k\geq k_0= k_0(b)$ there holds
\begin{equation}\label{eq60}
\bar w_k(r)\leq b\log\bigg(\frac{r_k}{r}\bigg)+C\ \text{ for } 0\leq r \leq T_k,
\end{equation}
and we have
\begin{equation}\label{eq61}
\lim_{k\to\infty}\tilde \Lambda_k(T_k)=\Lambda_1.
\end{equation}
\end{lemma}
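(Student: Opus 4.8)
The statement is the general-domain analogue of Lemma \ref{lemma2} (together with its consequences \eqref{stima1}, \eqref{stima1c}, \eqref{stima1b}), and the plan is to follow that proof as closely as possible, replacing the pointwise radial identities there by their spherical-average versions. The point of departure is the equation $(-\Delta)^m \bar w_k = \bar f_k$ satisfied by the spherical averages, which is again a radial ODE in $r$. First I would reproduce the solve-and-compare scheme: for $0<t\le\rho_k$ let $g_k$ solve $\Delta^m g_k=\Delta^m \bar w_k$ on $B_t$ with homogeneous Dirichlet data, so that by Lemma \ref{lemmaparti} the normal derivative $(-\partial_\nu)^m g_k(t)$ is given by the same formula \eqref{parti5}--\eqref{parti6} with $\sigma_k$ now as in \eqref{sigmakbis}. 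Combining this with \eqref{56} (which replaces \eqref{etak3}) gives, exactly as in Lemma \ref{lemmaL}, a lower bound $(-\partial_\nu)^m g_k(t)\ge 2^{m-1}(m-1)!\,b\, t^{-m}$ for $Lr_k\le t\le \rho_k$. The algebraic identity \eqref{ratta} relating $\partial_\nu^m g_k(t)$ to the iterated operator applied to $\bar w_k'(t)$ carries over verbatim, since it only used radial symmetry and the harmonicity of the difference $\bar h_k = \bar w_k - g_k$; then the inductive argument of Step 2 of Lemma \ref{lemma2} produces the differential inequality $\bar w_k'(t)\le -b/t + tP(t)$ and, integrating once more using $\bar w_k(Lr_k)\to\eta_0(L)$ (from \eqref{56} and Theorem \ref{trm1}), the bound \eqref{eq60} for $0\le r\le T_k$.

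With \eqref{eq60} in hand, \eqref{tk} and \eqref{eq61} follow as in Step 3 of Lemma \ref{lemma2}, with two modifications. The proof that $r_k/T_k\to 0$ is identical: if $\liminf T_k/r_k = L<\infty$ along a subsequence, then on one side $u_k(0)(u_k(T_k)-u_k(0))+\log 2 = \eta_k(T_k/r_k)\to\log(2/(1+L^2))$ by Theorem \ref{trm1}, while on the other side one needs $\bar u_k(T_k)=\ve u_k(0)$ to force $u_k(0)(\bar u_k(T_k)-u_k(0))\to -\infty$; here I would invoke \eqref{eq57} to pass between $u_k$ and $\bar u_k$ up to a bounded additive error, so the contradiction still goes through. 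The statement $T_k/S_k\to 0$ is new to the general case: since $\bar u_k(T_k)=\ve u_k(0)$ and $\bar u_k(S_k)=\ve u_k(0)/2$, applying \eqref{eq60} at $r=S_k$ gives $\tfrac{\ve}{2}u_k^2(0)-u_k^2(0) \le \bar w_k(S_k) \le b\log(r_k/S_k)+C$ would be false for $S_k$ of order $T_k$; more directly, $\bar w_k(S_k)-\bar w_k(T_k) = u_k(0)(\bar u_k(S_k)-\bar u_k(T_k)) = -\tfrac{\ve}{2}u_k^2(0)\to-\infty$, while the monotonicity-type estimate $\bar w_k'\le 0$ on $[T_k,S_k]$ (from the $j=m$ case of the $\psi_{j,k}$ induction, valid on $[0,\rho_k]$) together with the gradient bound forces $|\bar w_k(S_k)-\bar w_k(T_k)|\le C\,u_k(0)\log(S_k/T_k) \cdot (\text{bounded})$ by Lemma \ref{lemma3}-type control — so $\log(S_k/T_k)\to\infty$. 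For \eqref{eq61} I would repeat the integral estimate at the end of Lemma \ref{lemma2}: using $(\ve-1)u_k^2(0)\le \bar w_k(r)\le 0$ on $[0,T_k]$ and \eqref{eq60}, bound $\bar f_k(r)\le C r_k^{-2m}(r_k/r)^{m(\ve+1)b}$, choose $b<2$ with $m(\ve+1)b = 2m+\ve$, and integrate on $B_{T_k}\setminus B_{Lr_k}$ to see the excess over $\Lambda_1$ is $O(L^{-\ve})$; the lower bound $\tilde\Lambda_k(T_k)\ge\Lambda_1$ comes from \eqref{56} and $\tilde\Lambda_k(Lr_k)\le\tilde\Lambda_k(T_k)$.

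The main obstacle, and the only place where real care beyond transcription is needed, is controlling the discrepancy between $u_k$, $\bar u_k$ and the square quantities $\overline{u_k e^{mu_k^2}}$, $\bar u_k e^{m\bar u_k^2}$, $\tilde e_k$, $\bar e_k$ that appear in the definitions \eqref{sigmakbis}. In the radial case all of these coincide; here Jensen's inequality only gives one-sided bounds, and the exponential can amplify oscillations. The saving grace is \eqref{eq57}: on every annulus $\{r/2\le |x|\le r\}$ with $r\le\rho_k$ the oscillation of $u_k^2$ is bounded by a fixed constant, so $\bar e_k(r)$ and $\tilde e_k(r)$ differ by at most a bounded multiplicative factor, and likewise $\bar f_k$ is comparable to $\lambda_k u_k(0)\bar u_k e^{m\bar u_k^2}$. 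I would use this systematically to transfer each radial estimate, accepting a bounded constant $C$ wherever the radial proof had an equality; since all the final conclusions \eqref{tk}, \eqref{eq60}, \eqref{eq61} are stated up to such constants (or are limits insensitive to them), this loss is harmless. A secondary point is that $\rho_k\to 0$ only like $o(1)$ relative to the other scales but is bounded away from $0$ times $r_k$ (indeed $r_k=o(\rho_k)$ by Theorem \ref{trm1}), so the range $Lr_k\le t\le\rho_k$ on which the differential inequality holds is nonempty for large $k$, and the boundary term at $t=\rho_k$ — where we no longer have the Dirichlet data of $\partial\Omega$ — must be absorbed into the polynomial $P$; here I would use \eqref{eq57} and Proposition \ref{grad} to bound $\bar w_k$ and its first few derivatives at $r=\rho_k$ by $C\,u_k(0)$, which after division by $u_k(0)$ in the relevant places contributes only bounded terms.
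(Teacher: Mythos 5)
Your general strategy — transplant the radial proof to the spherical averages $\bar w_k$, use Jensen and \eqref{eq57} to compare $\bar e_k$ with $\tilde e_k$, and invoke Proposition \ref{grad} for boundary terms — is indeed the paper's approach. But there are two genuine gaps in the execution, both stemming from a misreading of the logical order and of the role of $S_k$.

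First, the proof of $T_k/S_k\to 0$ cannot use \eqref{eq60}: the gradient estimate \eqref{eq60} is derived \emph{after}, and in fact \emph{from}, \eqref{tk}, so that route is circular. Your ``more direct'' alternative invokes $\bar w_k'\le 0$ on $[0,\rho_k]$ via the $\psi_{j,k}$ induction, but that induction crucially used the Dirichlet data $\partial_\nu^j w_k(R)=0$ on $\partial\Omega$ to obtain $\psi_{j,k}(R)=0$; no such boundary condition is available at $\partial B_{\rho_k}\subset\subset\Omega$, so the monotonicity of $\bar w_k$ is simply not established in this range. The correct (and elementary) argument, which is what the paper intends by ``Property \eqref{tk} follows from \eqref{eq57}'', is that \eqref{eq57} bounds the oscillation of $u_k^2$ on every dyadic annulus $B_r\setminus B_{r/2}$ with $r\le\rho_k$ by a fixed constant, hence $\bar u_k^2(T_k)-\bar u_k^2(S_k)\le C\log_2(S_k/T_k)$; since the left side equals $\tfrac34\ve^2 u_k^2(0)\to\infty$, one gets $\log(S_k/T_k)\to\infty$ directly, with no appeal to any ODE estimate.

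Second, the integration in the inductive step must run from $t$ to $S_k$, not to $\rho_k$. The point is that the boundary derivative bound coming from Proposition \ref{grad} reads
$$|\partial_r^j\bar w_k(S_k)|=\frac{u_k(0)}{\bar u_k(S_k)}\cdot\frac{S_k^j\,\bar u_k(S_k)\,|\partial_r^j\bar u_k(S_k)|}{S_k^j}\le\frac{C}{\ve\,S_k^j},$$
and the factor $1/\bar u_k(\cdot)$ is only under control because $\bar u_k(S_k)=\ve u_k(0)/2$ by \emph{definition} of $S_k$. If you integrate to $\rho_k$, the same computation produces $C\,u_k(0)/\big(\bar u_k(\rho_k)\,\rho_k^j\big)$, and $\bar u_k(\rho_k)/u_k(0)$ has no lower bound, so the boundary contribution is not obviously small; your suggestion that these terms ``after division by $u_k(0)$ contribute only bounded terms'' is not substantiated. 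With the endpoint at $S_k$ the boundary contribution to $t\bar w_k'(t)$ is of order $(t^2/S_k^2)/\ve$, which is $o(1)$ uniformly for $Lr_k\le t\le T_k$ precisely because $T_k/S_k\to 0$ has already been established — this is the reason \eqref{tk} must come first, and the reason $S_k$ is introduced at all. Once these two points are repaired, the rest of your outline (in particular the derivation of \eqref{eq61} from \eqref{eq60} by the same integral estimate as in the radial case) is correct and matches the paper.
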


\begin{proof} Property \eqref{tk} follows from \eqref{eq57} and our choice of $T_k$ and $S_k$. 

As in the proof of Lemma \ref{lemma2} for a given $t\leq T_k$ we decompose $\bar w_k=g_k+h_k$ 
on $B_t$, with 
$$\Delta^m h_k=0\textrm{ in }B_t,\ \textrm{ and } 
g_k=\partial_\nu g_k=\ldots=\partial_\nu^{m-1}g_k=0\textrm{ on }\partial B_t.$$
By \eqref{56}, we get the analogues of Lemma \ref{lemmaL} and of \eqref{stima3}; that is,
for $L\geq L_0=L_0(b)$, $k\geq k_0=k_0(L)$ there holds
$$(-1)^{m-1}t^{m-1}\underbrace{\Big(t^{-1} \Big(t^{-1}\cdots\Big(t^{-1}}_{m-1 \textrm{ times}}
\Big(\bar w_k'(t)+\frac{b}{t}\Big)\underbrace{\Big)'\cdots\Big)'\Big)'}_{m-1\textrm{ times}}
\leq 0$$
for all $t\in[Lr_k,S_k]$. We now inductively integrate from $t$ to $S_k$ as in Lemma \ref{lemma2}. 
Using Proposition \ref{grad} to bound
\begin{equation*}
|\de_r^j\bar w_k(S_k)|=\frac{u_k(0)}
{\bar u_k(S_k)}\frac{S_k^j \bar u_k(S_k)|\de_r^j \bar u_k(S_k)|}{S_k^j}
\leq \frac{C}{\ve S_k^j},
\end{equation*}
and recalling \eqref{tk}, for $L\geq L_0$ and $k\geq k_0$ we get
$$t \bar w_k'(t)\leq -b +\frac{C}{\ve}\frac{t^2}{S_k^2}= -b + o(1) 
\ \text{ for all } Lr_k\leq t\leq T_k,$$
with error $o(1) \to 0$ as $k \to \infty$.
Since $b<2$ is arbitrary, \eqref{eq60} follows as before. 

In order to prove \eqref{eq61} observe that the definition of $r_k$ gives
\begin{equation*}
   \begin{split}
      \tilde{e}_k(r) &\leq C\lambda_k u^2_k(0)e^{m u^2_k(0)}
        e^{2m\big(1+\frac{\bar w_k(r)}{2u^2_k(0)}\big)\bar w_k(r)}\\
     &\leq C\lambda_k r_k^{2m}u_k^2(0)e^{mu_k^2(0)}r_k^{-2m}e^{m(\ve+1)\bar w_k(r)}
      \leq Cr_k^{-2m}\bigg(\frac{r_k}{r}\bigg)^{m(\ve+1)b}
   \end{split}
\end{equation*}
for $Lr_k\leq r\leq T_k$. We then complete the proof as in the radial case.
\end{proof}

For $0\leq s<t\leq\rho_k$ set
$$N_k(s,t):=\Lambda_k(t)-\Lambda_k(s)=\int_{B_t\backslash B_s} \lambda_k u_k^2e^{mu_k^2}dx,$$
and let
\begin{equation}\label{eq62}
\tilde N_k(s,t):=\tilde \Lambda(t)-\tilde\Lambda(s)
=\int_s^t\omega_{2m-1}\lambda_kr^{2m-1}\bar u_k^2 e^{m\bar u_k^2}dr\leq N_k(s,t).
\end{equation}
From \eqref{eq57} we infer
\begin{equation}\label{eq58}
\sup_{|x|=r}e^{mu_k^2(x)}\leq Ce^{m\bar u_k^2(r)}\ \text{ for }0\leq r\leq \rho_k;
\end{equation}
hence we obtain
\begin{equation}\label{eq59}
\sup_{|x|=r}u_k^2(x)e^{mu_k^2(x)}\leq C(1+\bar u_k^2(r))e^{m\bar u_k^2(r)} 
\ \text{ for }0\leq r\leq \rho_k.
\end{equation}
Then \eqref{eq59} implies
\begin{equation}\label{eq63}
N_k(s,t)\leq C\tilde N_k(s,t)+o(1) \ \text{ for }0\leq s\leq t\leq \rho_k,
\end{equation}
with $o(1)\to 0$ as $k\to \infty$.
Similarly, setting
$$\tilde P_k(t)=t\int_{\de B_t}\tilde e_kd\sigma
=\omega_{2m-1}\lambda_k t^{2m}\bar u_k^2(t)e^{m\bar u_k^2(t)}
\leq P_k(t):= t\int_{\de B_t}e_kd\sigma,$$
we can estimate
\begin{equation}\label{eq63bis}
P_k(t)\leq C\tilde P_k(t)+o(1) \ \text{ for }0\leq t\leq \rho_k,
\end{equation}
with $o(1)\to 0$ as $k\to \infty$.
Finally, from \eqref{eq59} we also obtain the analogue of \eqref{pkt}; that is, we have
\begin{equation}\label{eq26bis}
P_k(t)\leq C N_k(t/2,t)+o(1)\leq CP_k(t/2)+o(1),
\end{equation}
with error $o(1)\to 0$ as $k\to\infty$.

In particular, we obtain the following improvement of Lemma \ref{lemma2ter}.
 
\begin{lemma}\label{lemma4.3} For any $0 < \ve < 1$, if $T_k=T_k(\ve)\leq \rho_k$ is as in 
Lemma \ref{lemma2ter}, then we have
$$\lim_{k\to\infty}\Lambda_k(T_k)=\Lambda_1.$$
\end{lemma}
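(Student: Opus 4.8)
The plan is to sandwich $\Lambda_k(T_k)$ between $\tilde\Lambda_k(T_k)$ from below and $\Lambda_1$ from above, the upper bound coming from the comparison estimate \eqref{eq63}, which controls $N_k-\tilde N_k$ on annuli between scale $r_k$ and scale $T_k$.

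First I would record the trivial lower bound $\tilde\Lambda_k(T_k)\le\Lambda_k(T_k)$ from \eqref{sigmakbis}, so that \eqref{eq61} of Lemma \ref{lemma2ter} immediately yields $\liminf_{k\to\infty}\Lambda_k(T_k)\ge\Lambda_1$.

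For the matching upper bound I would fix $L>1$ and, for $k$ large enough that $Lr_k\le T_k\le\rho_k$, split
\begin{equation*}
\Lambda_k(T_k)=\Lambda_k(Lr_k)+N_k(Lr_k,T_k).
\end{equation*}
The first term is handled by \eqref{56}, which gives $\lim_{L\to\infty}\lim_{k\to\infty}\Lambda_k(Lr_k)=\Lambda_1$. For the second, non-negative term I would apply \eqref{eq63}, namely $N_k(Lr_k,T_k)\le C\tilde N_k(Lr_k,T_k)+o(1)$ with $o(1)\to 0$ as $k\to\infty$, and then write $\tilde N_k(Lr_k,T_k)=\tilde\Lambda_k(T_k)-\tilde\Lambda_k(Lr_k)$. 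Letting $k\to\infty$ with $L$ fixed, \eqref{eq61} and the inner limit in \eqref{56} show this tends to $\Lambda_1$ minus the $L$-dependent limit of $\tilde\Lambda_k(Lr_k)$; letting then $L\to\infty$ and using the outer limit in \eqref{56} makes it tend to $0$. Hence $\lim_{L\to\infty}\limsup_{k\to\infty}N_k(Lr_k,T_k)=0$, so $\limsup_{k\to\infty}\Lambda_k(T_k)\le\Lambda_1$, and together with the lower bound this gives the claim.

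The only delicate point is the bookkeeping of the two iterated limits in $k$ and $L$, together with the observation that the fixed constant $C$ in \eqref{eq63} does no harm precisely because $\tilde N_k(Lr_k,T_k)$ itself vanishes in the limit. I do not expect a genuine obstacle: the analytic substance is already contained in the inputs — the oscillation bound \eqref{eq57} derived from Proposition \ref{grad}, which underpins \eqref{eq63}, and the concentration identity \eqref{eq61} of Lemma \ref{lemma2ter} — so this lemma is essentially a formal consequence of them.
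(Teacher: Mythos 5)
Your proposal is correct and follows essentially the same route as the paper: the key step in both is that \eqref{eq61} and \eqref{eq63} yield $\lim_{L\to\infty}\lim_{k\to\infty}N_k(Lr_k,T_k)\le C\lim_{L\to\infty}\lim_{k\to\infty}\tilde N_k(Lr_k,T_k)=0$, which combined with \eqref{56} gives the claim. You spell out the sandwich and the $L$-versus-$k$ bookkeeping a bit more explicitly, but the ingredients and the logical structure are identical to the paper's one-line proof.
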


\begin{proof}
Indeed \eqref{eq61} and \eqref{eq63} imply
$$\lim_{L\to\infty}\lim_{k\to\infty} N_k(Lr_k,T_k)
\le C \lim_{L\to\infty}\lim_{k\to\infty}\tilde N_k(Lr_k,T_k)=0,$$
which together with \eqref{56} implies the lemma.
\end{proof}

If the assumptions of Lemma \ref{lemma2ter} hold for any $0 < \ve < 1$
we may proceed to resolve secondary 
concentrations at scales $o(\rho_k)$ as in the radially symmetric case. 
Indeed, by Lemmas \ref{lemma2ter} and \ref{lemma4.3} we may then choose a subsequence 
$(u_k)$, numbers $\ve_k\to 0$ as $k\to\infty$ and corresponding numbers 
$s_k=T_k(\ve_k)\leq \rho_k$ with $r_k/s_k\to 0$ as $k\to\infty$ and such that 
$$\lim_{k\to\infty}\Lambda_k(s_k)=\Lambda_1,
\quad \lim_{L\to\infty}\lim_{k\to\infty}N_k(Lr_k,s_k)=0,$$
while in addition $\bar u_k(s_k) \to \infty$ and
$$\lim_{k\to\infty}\frac{\bar u_k(s_k)}{\bar u_k(Lr_k)}=0\ \text{ for every }L>0.$$

As before, by slight abuse of notation, we set $r_k=r_k^{(1)}$, $s_k=s_k^{(1)}$, so that
the analogue of $(H_1)$ holds, and iterate.
Suppose that for some integer $\ell\geq 1$ we already have determined numbers
$$s_k^{(0)} := 0 < r_k^{(1)}< s_k^{(1)}<\cdots< r_k^{(\ell)}< s_k^{(\ell)}=o(\rho_k)$$
satisfying the analogues of $(H_{\ell,1})$ up to $(H_{\ell,4})$. 
Similar to Lemma \ref{key} we then have the following result.

\begin{lemma}\label{keybis} There is a constant $C_0=C_0(\Lambda)$ such that for
$s_k^{(\ell)} \le t_k = o(\rho_k)$ there holds
\begin{equation}\label{poho2}
\tilde N_k(s_k^{(\ell)},t_k)\leq \frac{\tilde P_k(t_k)}{m}+C_0\tilde N_k^2(s_k^{(\ell)},t_k)+o(1),
\end{equation} 
with error $o(1)\to 0$ as $k\to\infty$.
\end{lemma}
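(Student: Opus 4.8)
The plan is to mimic the proof of Lemma \ref{key} from the radial case, working now with the spherical averages $\bar u_k$ and $\bar w_k$ in place of $u_k$ and $w_k$, and using the tilded quantities $\tilde N_k$, $\tilde P_k$ throughout. First I would start from the identity $(-\Delta)^m\bar w_k=\bar f_k$ and integrate by parts exactly as in \eqref{Nkstima}, obtaining
$$\tilde N_k(s,t)\leq \frac{\tilde P_k(t)}{2m}-\omega_{2m-1}\int_s^t\lambda_k r^{2m}\Big(\frac{1}{m}+\bar u_k^2\Big)\frac{\bar u_k}{u_k(0)}\bar w_k' e^{m\bar u_k^2}dr,$$
where $s=s_k^{(\ell)}$; here one must be slightly careful because $\bar u_k^2 \ne \overline{u_k^2}$, but \eqref{eq57} controls the discrepancy up to an $o(1)$ error on the relevant range $0\le r\le\rho_k$, so the integration by parts goes through after absorbing bounded factors. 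Note that the monotonicity statement $\bar w_k'\le 0$ (hence $\bar u_k'\le 0$) that is needed for the estimates below follows from Lemma \ref{lemma2ter} together with \eqref{eq57}, exactly as in Lemma \ref{lemma2}.

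Next I would reproduce the representation of $-t\bar w_k'(t)\frac{\bar u_k(t)}{u_k(0)}$ as a nested integral of $A_k$, obtained from \eqref{parti5} and \eqref{ratta} applied to $\bar w_k$ and integrated $m-1$ times from $t$ to $R$ using the Dirichlet boundary conditions $\de_\nu^j\bar w_k(R)=0$. Then I would split the resulting quantity $I=II+III$ according to whether the innermost variable $\rho_{m-1}$ is $\le t$ or $>t$, just as in the radial proof. For the piece $II$ ($\rho\le t$), monotonicity of $\bar u_k$ gives $\tau_k(\rho,t)=\frac{\bar u_k(t)\sigma_k(\rho)}{u_k(0)}\le\int_{B_\rho}\lambda_k\bar u_k(\rho)\overline{u_ke^{mu_k^2}}dx$, and using \eqref{eq59} together with $(H_{\ell,2})$ and $(H_{\ell,4})$ this is bounded by $\tilde N_k(s,t)+o(1)$ up to a bounded factor; after inserting the uniform bound on the nested integral we get $II\le C\tilde N_k(s,t)+o(1)$. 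For $III$ ($\rho>t$) I would use the factorization $\tau_k(\rho,t)=\frac{\bar u_k(t)}{\bar u_k(\rho)+1}\int_{B_\rho}\lambda_k(\bar u_k(\rho)+1)\overline{u_ke^{mu_k^2}}dx$ and the Hölder estimate $|\bar u_k(t)-\bar u_k(\rho)|\le\|\nabla u_k\|_{L^{2m}}(\log\frac{\rho}{t})^{(2m-1)/2m}$ (which holds for averages since it follows by integrating over spheres) to get $\frac{t\bar u_k(t)}{\rho(\bar u_k(\rho)+1)}\le C$, hence $\frac{t}{\rho}\tau_k(\rho,t)\le C_1\tilde N_k(s,\rho)+o(1)$; then the same successive splitting of the outer integrals with respect to $t_1,\dots,t_{m-1}$ into pieces over $[t,Lt]$ and $[Lt,R]$, using the obvious bound $\tilde N_k\le 2\Lambda$ and the uniform bounds on the nested integrals, yields $III\le C_m\tilde N_k(s,Lt)+\frac{C_m\Lambda}{L}+o(1)$.

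Finally, to convert $\tilde N_k(s,Lt)$ back to $\tilde N_k(s,t)$ I would invoke the doubling-type estimate: for $t\le 2s$ use the analogue of \eqref{Lsk} (which holds for the tilded quantities by the same argument, combined with \eqref{eq63}), and for $t>2s$ use \eqref{eq26bis}; in either case $\tilde N_k(s,Lt)\le C(L)\tilde N_k(s,t)+o(1)$. This gives $-t\bar w_k'(t)\frac{\bar u_k(t)}{u_k(0)}\le C(L,\Lambda)\tilde N_k(s,t)+\frac{C_{m+1}}{L}+o(1)$, and inserting into the integrated-by-parts inequality, using $\tilde e_k(r)\le Ce^{m\bar u_k^2(r)}\lambda_k r^{2m}(\frac1m+\bar u_k^2)$ to recognize the integral as bounded by $\tilde N_k(s,t)$ times that bound, we obtain
$$\tilde N_k(s,t)\leq\frac{\tilde P_k(t)}{2m}+\Big(C(L,\Lambda)\tilde N_k(s,t)+\frac{C_{m+1}}{L}\Big)\tilde N_k(s,t)+o(1),$$
and choosing $L=2C_{m+1}$ gives \eqref{poho2} with an appropriate $C_0=C_0(\Lambda)$. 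The main obstacle I anticipate is the bookkeeping needed to pass between $\bar u_k^2$, $\overline{u_k^2}$, and $\overline{u_k e^{mu_k^2}}$ while keeping all error terms genuinely $o(1)$ on the range $t_k=o(\rho_k)$; the key tools for this are \eqref{eq57}, \eqref{eq58}, \eqref{eq59}, \eqref{eq63} and \eqref{eq63bis}, which are precisely the estimates that replace the simple radial identities of Section \ref{sec2}.
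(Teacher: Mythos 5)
The overall strategy — work with spherical averages $\bar u_k$, $\bar w_k$ and the tilded quantities, use \eqref{eq57}--\eqref{eq63bis} to transfer between $\bar u_k^2$, $\overline{u_k^2}$, and $\overline{u_ke^{mu_k^2}}$ — is indeed the paper's approach, and your handling of the integration by parts and of the terms $II$ and $III$ tracks the paper correctly. However, there is a genuine gap at the step where you derive the nested-integral representation of $-t\bar w_k'(t)\bar u_k(t)/u_k(0)$ by ``integrating $m-1$ times from $t$ to $R$ using the Dirichlet boundary conditions $\de_\nu^j\bar w_k(R)=0$.'' In the general case $\Omega$ is not a ball centered at $x_k^{(i)}=0$, so the spherical average $\bar w_k(r)=\Intm_{\partial B_r(0)}w_k\,d\sigma$ does \emph{not} vanish (nor do its radial derivatives) at any particular radius $R$: the Dirichlet conditions live on $\partial\Omega$, not on a sphere centered at $0$. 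Consequently, when you integrate the ODE $m-1$ times, the boundary terms do not disappear; they must be collected and estimated explicitly.

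This is exactly what the paper does: it integrates only from $t$ to $\rho_k$ (not to $R$), producing a boundary remainder $B_k(t,\rho_k)$ which is a linear combination of terms of the form $\frac{t^{2l+2}}{\rho_k^{2l+2}}\,\rho_k^j\,\de_r^j\bar w_k(\rho_k)$. These are then shown to be $o(1)$ after multiplication by $\bar u_k(t)/u_k(0)$, using the gradient bound of Proposition \ref{grad} (to get $\rho_k^j(\bar u_k(\rho_k)+1)|\de_r^j\bar u_k(\rho_k)|\le C$), the analogue of \eqref{3.2} (to bound $\frac{t\bar u_k(t)}{\rho_k(\bar u_k(\rho_k)+1)}$), and crucially the hypothesis $t_k=o(\rho_k)$ from the statement of the lemma, which makes the prefactor $(t/\rho_k)^{2l+1}$ vanish. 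Your proposal never invokes the constraint $t_k=o(\rho_k)$ at all, which is a strong signal that the boundary terms were overlooked; without that constraint, the estimate \eqref{poho2} is false, since the boundary contribution at $\rho_k$ would not be negligible. To repair the argument, you should integrate only to $\rho_k$, write down $B_k(t,\rho_k)$ explicitly, and then carry out the boundary-term estimate using Proposition \ref{grad} and $t_k=o(\rho_k)$.
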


\begin{proof}
For ease of notation we write $s=s_k^{(\ell)}$. Replacing $w_k$ with $\bar w_k$ in the proof of 
Lemma \ref{key}, similar to \eqref{Nkstima} we find
$$\tilde N_k(s,t)\leq \frac{\tilde P_k(t)}{2m}-\int_s^t\omega_{2m-1} r^{2m} 
\frac{\bar u_k(r)}{u_k(0)}\bar w_k'(r)\tilde e_kdr+o(1),$$
with error $o(1)\to 0$ as $k\to\infty$, uniformly in $s\leq t$.
Proceeding as in Lemma \ref{key}, from the equation
$$\underbrace{(t^{-1} (t^{-1}\cdots (t^{-1}}_{m-1 \textrm{ times}}\bar w_k'(t)
\underbrace{)'\cdots)')'}_{m-1\textrm{ times}}=(-1)^m\frac{A_k(t)}{\omega_{2m-1}t^{4m-3}},$$
where $A_k$ is defined by \eqref{parti6}, with $\sigma_k$ now given by \eqref{sigmakbis}, 
we get
$$t\bar w_k'(t)=-t^2\int_t^{\rho_k}t_1\int_{t_1}^{\rho_k}t_2\cdots\int_{t_{m-2}}^{\rho_k}
\frac{A_k(t_{m-1})}{\omega_{2m-1}t_{m-1}^{4m-3}}dt_{m-1}\cdots dt_1+ B_k(t,{\rho_k}),$$
where $B_k(t,\rho_k)$ corresponds to the boundary terms. 
By arguing as in the proof of Lemma \ref{lemma2} we see that $B_k$ is a linear combination 
of terms of the form
$$\frac{t^{2l+2}}{\rho_k^{2l+2}}\rho_k^j \de_r^j\bar w_k(\rho_k),\ 0 \le l \le m-2,\ 1\leq j\leq m-1.$$
After multiplication with $\frac{\bar u_k(t)}{u_k(0)}$, the resulting terms can be written as
\begin{equation*}
 \begin{split}
 \frac{t^{2l+2}}{\rho_k^{2l+2}}\bar u_k(t) \rho_k^j \de_r^j \bar u_k(\rho_k)
 &=\frac{t^{2l+1}}{\rho_k^{2l+1}}\frac{t\bar u_k(t)}{\rho_k(\bar u_k(\rho_k)+1)}
 \rho_k^j(\bar u_k(\rho_k)+1)\de_r^j\bar u_k(\rho_k).
 \end{split}
\end{equation*}
But by Proposition \ref{grad} and the analogue of \eqref{3.2} we have 
\begin{equation*}
\rho_k^j(\bar u_k(\rho_k)+1)|\de_r^j\bar u_k(\rho_k)| \le C,\ 
\frac{t\bar u_k(t)}{\rho_k(\bar u_k(\rho_k)+1)} \le C.
\end{equation*}
Hence for $t=t_k=o(\rho_k)$ we have $\frac{\bar u_k(t)}{u_k(0)}B_k(t,{\rho_k}) \to 0$ as $k \to \infty$, 
and up to an error $o(1)\to 0$ as $k\to\infty$ we obtain the identity
$$-t\bar w_k'(t)\frac{\bar u_k(t)}{u_k(0)}=t^2\frac{\bar u_k(t)}{u_k(0)}
\int_t^{\rho_k}t_1\int_{t_1}^{\rho_k}t_2\cdots\int_{t_{m-2}}^{\rho_k}
\frac{A_k(t_{m-1})}{\omega_{2m-1}t_{m-1}^{4m-3}}dt_{m-1}\cdots dt_1.$$
The rest of the proof is similar to the proof of Lemma \ref{key}.
\end{proof}

On account of \eqref{eq63} and \eqref{eq63bis} we now obtain the analogue of Lemma \ref{key2}.
The proof is the same as in the radially symmetric case.

\begin{lemma}\label{key2bis}
Let $C_0=C_0(\Lambda)$ be the constant appearing in \eqref{poho2}, and let $t_k>s_k^{(\ell)}$ 
be such that for a subsequence 
\begin{equation}\label{0alphabis}
\lim_{k\to\infty}\frac{t_k}{\rho_k}=0,\quad 0<\lim_{k\to\infty}N_k(s_k^{(\ell)},t_k)=:\alpha<\frac{1}{2C_0}.
\end{equation}
Then
$$\lim_{k\to\infty}\frac{s_k^{(\ell)}}{t_k}= 0,\
\liminf_{k\to\infty} P_k(t_k)\geq \frac{m\alpha}{2}, \text{ and }
\lim_{L\to\infty}\lim_{k\to\infty} N_k(s_k^{(\ell)},t_k/L)=0.$$
\end{lemma}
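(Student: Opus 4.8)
The plan is to mirror, essentially verbatim, the proof of Lemma \ref{key2} in the radial case, checking that every step survives the passage from the genuinely radial setting to the spherical-average setting under the extra localization hypothesis $t_k = o(\rho_k)$. The three conclusions are established one after the other: first $s_k^{(\ell)}/t_k \to 0$, then the lower bound on $P_k(t_k)$, and finally the ``no mass at intermediate scales'' statement \eqref{eq31b}.

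First I would prove $s_k^{(\ell)}/t_k \to 0$. Exactly as in the radial case, the argument behind \eqref{Lsk} applies here: using the doubling estimate \eqref{eq26bis} (the general-case analogue of \eqref{pkt}) one shows $\lim_{L\to\infty}\lim_{k\to\infty} N_k(s_k^{(\ell)}, L s_k^{(\ell)}) = 0$. Since by \eqref{0alphabis} the limit $\alpha = \lim_k N_k(s_k^{(\ell)}, t_k)$ is strictly positive, $t_k$ cannot stay comparable to $s_k^{(\ell)}$, so $s_k^{(\ell)}/t_k \to 0$. Next, for the lower bound on $P_k(t_k)$, I would feed the hypothesis into Lemma \ref{keybis}: by \eqref{eq63} and \eqref{poho2} we have, up to $o(1)$,
$$
\tilde N_k(s_k^{(\ell)}, t_k) \le \frac{\tilde P_k(t_k)}{m} + C_0 \tilde N_k^2(s_k^{(\ell)}, t_k) + o(1) \le \frac{\tilde P_k(t_k)}{m} + C_0\, \alpha\, \tilde N_k(s_k^{(\ell)}, t_k) + o(1),
$$
and since $C_0 \alpha < \tfrac12$ the term $C_0 \alpha \tilde N_k$ is absorbed, giving $\liminf_k \tilde P_k(t_k) \ge \tfrac{m}{2}\liminf_k \tilde N_k(s_k^{(\ell)}, t_k)$; then \eqref{eq63} (relating $\tilde N_k$ and $N_k$) and \eqref{eq63bis} (relating $\tilde P_k$ and $P_k$) upgrade this to $\liminf_k P_k(t_k) \ge \tfrac{m\alpha}{2}$. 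Note that $t_k = o(\rho_k)$ is exactly what is needed so that the comparison estimates \eqref{eq57}--\eqref{eq63bis}, which are only valid on $[0,\rho_k]$, may be invoked at scale $t_k$.

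For the last assertion I would argue by contradiction exactly as in the proof of \eqref{eq31b}. Suppose $\lim_{L\to\infty}\limsup_{k\to\infty} N_k(s_k^{(\ell)}, t_k/L) = \beta > 0$. For each fixed $L \ge 1$ and large $k$ we have $\tfrac{\beta}{2} \le N_k(s_k^{(\ell)}, t_k/L) \le N_k(s_k^{(\ell)}, t_k) < \tfrac{1}{2C_0}$, so the already-proven lower bound applies with $t_k/L$ in place of $t_k$ (and $t_k/L = o(\rho_k)$ still holds), giving $\liminf_k P_k(t_k/L) \ge \tfrac{m\beta}{2}$. The doubling estimate \eqref{eq26bis} then yields $C\lim_k N_k(t_k/(2L), t_k/L) \ge \tfrac{m\beta}{2}$; choosing $L = 2^j$ and summing over $0 \le j \le M-1$ produces $C\lim_k \Lambda_k(t_k) \ge C\lim_k N_k(2^{-M}t_k, t_k) \ge \tfrac{mM\beta}{2} \to \infty$ as $M \to \infty$, contradicting the uniform bound \eqref{eq1}. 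Hence $\beta = 0$, which is the claim.

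The only place requiring genuine care — and the main obstacle — is the bookkeeping around which estimates hold only up to scale $\rho_k$: one must verify that every application of Lemma \ref{keybis} and of \eqref{eq26bis} is made at a radius that is $o(\rho_k)$, so that $t_k/L$, $t_k/(2L)$, $2^{-M}t_k$ all remain in the admissible range. Since these radii are all bounded above by $t_k = o(\rho_k)$, this is automatic, and the proof goes through word for word as in the radial case; I would simply remark ``The proof is the same as that of Lemma \ref{key2}, using \eqref{eq63}, \eqref{eq63bis} and \eqref{eq26bis} in place of \eqref{pkt}, and noting that all the radii involved are $o(\rho_k)$.''
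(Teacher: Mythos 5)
Your proposal is correct and takes exactly the paper's approach; the paper itself merely remarks that ``the proof is the same as in the radially symmetric case'' on account of \eqref{eq63} and \eqref{eq63bis}, and you have supplied precisely those details, including the needed observation that all radii used ($t_k/L$, $t_k/(2L)$, $2^{-M}t_k$) remain $o(\rho_k)$. One cosmetic point: after absorbing the quadratic term your chain of inequalities yields $\liminf_k P_k(t_k)\ge\liminf_k\tilde P_k(t_k)\ge\tfrac{m}{2}\liminf_k\tilde N_k(s_k^{(\ell)},t_k)$, and since the hypothesis \eqref{0alphabis} controls $N_k$ rather than $\tilde N_k$, passing through \eqref{eq63} costs the constant $C$ appearing there, so one strictly obtains $\liminf_k P_k(t_k)\ge\tfrac{m\alpha}{2C}$ rather than $\tfrac{m\alpha}{2}$; this loss is harmless, since only a positive lower bound uniform in the scale is used in the third assertion and in the subsequent concentration analysis, and the same imprecision is present already in the paper's own statement.
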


We now closely follow \cite{LRS}.
By the preceding result it suffices to consider the following two cases.
In {\bf Case A} for any sequence $t_k = o(\rho_k)$ we have
\begin{equation*}
 \sup_{s_k^{(\ell)} <t< t_k} P_k(t) \rightarrow 0 \hbox{ as } k \rightarrow \infty,
\end{equation*}
and then in view of Lemma \ref{key2bis} also
\begin{equation}\label{5.12k}
 \lim_{L\rightarrow \infty}\lim_{k\rightarrow \infty} N_k(s_k^{(\ell)},\rho_k/L) = 0,
\end{equation}
thus completing the concentration analysis at scales up to $o(\rho_k)$.

In {\bf Case B} for some $s_k^{(\ell)}<t_k\le \rho_k$ there holds
\begin{equation*}
 \limsup_{k\rightarrow \infty} N_k(s_k^{(\ell)},t_k)>0,\ \
 \lim_{k\rightarrow \infty} \frac{t_k}{\rho_k}=0.
\end{equation*}
Then, as in the radial case, from Lemma \ref{key2bis} we infer that for a
subsequence $(u_k)$ and suitable numbers $r_k^{(\ell+1)} \in ]s_k^{(\ell)},t_k[$ we have
\begin{equation}\label{5.12h}
 \lim_{k\to\infty}\frac{s_k^{(\ell)}}{r_k^{(\ell+1)}}= 0,\ 
 \lim_{k\rightarrow \infty} N_k(s_k^{(\ell)},r_k^{(\ell+1)}) > 0, \ 
 \liminf_{k\rightarrow \infty} P_k(r_k^{(\ell+1)})>0;
\end{equation}
in particular, $\bar{u}_k(r_k^{(\ell+1)}) \rightarrow \infty$ as $k\rightarrow \infty$.
Also note that
\begin{equation}\label{5.12i}
 \lim_{L\rightarrow \infty} \limsup_{k\rightarrow \infty} N_k(s_k^{(\ell)}, r_k^{(\ell+1)}/L)
 =\lim_{k\rightarrow \infty} \frac{r_k^{(\ell+1)}}{\rho_k}
 =\lim_{k\rightarrow \infty} \frac{t_k}{\rho_k}=0.
\end{equation}
Moreover, analoguous to Proposition \ref{blow} we have the following result,
which is a special case of Proposition \ref{blow0} below. 

\begin{prop} \label{prop5.1g}
There exists a subsequence $(u_k)$ such that
\begin{equation*}
 \eta^{(\ell+1)}_k(x) :=
 \bar{u}_k(r_k^{(\ell+1)})(u_k(r_k^{(\ell+1)} x ) - \bar{u}_k(r_k^{(\ell+1)}))
 \rightarrow \eta^{(\ell+1)}(x)
\end{equation*}
in $C^{2m-1}_{\loc}({\mathbb R}^{2m}\setminus \{0\})$ as $k \rightarrow \infty$,
where $\eta_0^{(\ell+1)}:=\eta^{(\ell+1)}+c^{(\ell+1)}$ solves (8), (9) for a suitable 
constant $c^{(\ell+1)}$.
\end{prop}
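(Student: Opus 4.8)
The plan is to adapt the proof of Proposition \ref{blow} from the radial case, with the extra input of the gradient bound in Proposition \ref{grad} to control the oscillation of $u_k$ on dyadic annuli around $x_k^{(i)}$. First I would establish the analogue of Lemma \ref{lemma2.1}, namely that $v_k(x):=u_k(r_k^{(\ell+1)}x)-\bar u_k(r_k^{(\ell+1)})\to 0$ in $C^{2m-1}_{\loc}(\R{2m}\setminus\{0\})$. This follows the same scheme: $(-\Delta)^mv_k=g_k\geq 0$ with $g_k\to 0$ in $L^\infty_{\loc}$ away from the origin by \eqref{eq9}, one has the scaled Sobolev bounds \eqref{eq1.1}, and the capacity argument of \cite{str} kills the limiting polyharmonic function since it lies in $L^m(\R{2m})$. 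A minor point here is that one works with $u_k(r_k x)-\bar u_k(r_k)$ rather than $u_k(r_k x)-u_k(r_k x_0)$ for a radial direction $x_0$, but \eqref{eq57} shows the difference is $o(1/u_k(0))$, hence after multiplication by $\bar u_k(r_k)$ still negligible. One also needs the analogue of Lemma \ref{lemma3}, i.e.\ $\bar u_k(r_k)\int_{B_{Lr_k}\setminus B_{r_k/L}}|\nabla^ju_k|\,dx\leq C(Lr_k)^{2m-j}$, which again follows as in \cite{mar2}, Lemma 6, using the just-proven convergence $v_k\to 0$.

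Next I would carry out the two-step scheme of the proof of Proposition \ref{blow}. In Step 1 one shows $\eta_k^{(\ell+1)}\to\eta^{(\ell+1)}$ in $C^{2m-1,\alpha}_{\loc}(\R{2m}\setminus\{0\})$ by splitting $\eta_k=h_k+l_k$ on an annulus $\Omega_{2L}$, with $l_k$ controlled by elliptic estimates since $\|(-\Delta)^m\eta_k\|_{L^\infty(\Omega_{2L})}\leq C(L)$ (using \eqref{eq9} and $\bar u_k(r_kx)/\bar u_k(r_k)\to 1$ from Lemma \ref{lemma2.1}-analogue and \eqref{eq57}), and $h_k$ polyharmonic, bounded in $L^1$ via Lemma \ref{lemma3}-analogue and a Poincaré inequality, hence bounded in every $C^j(\Omega_L)$ by Proposition \ref{c2m}. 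In Step 2 one passes to the limit in the equation $(-\Delta)^m\eta_k=\mu_k\bar u_k(r_k\cdot)/\bar u_k(r_k)\,e^{m(\bar u_k(r_k\cdot)/\bar u_k(r_k)+1)\eta_k}$ — here \eqref{5.12h} guarantees $\mu_k\to\mu_0>0$ — to get $(-\Delta)^m\eta^{(\ell+1)}=\mu_0e^{2m\eta^{(\ell+1)}}$ on $\R{2m}\setminus\{0\}$, then removes the singularity at $0$ exactly as before: the energy bound gives $e^{2m\eta^{(\ell+1)}}\in L^1(\R{2m})$ via Fatou, and the estimate $\limsup_k\int_{B_{r_k/L}}\lambda_k\bar u_k(r_k)u_ke^{mu_k^2}\,dx\leq\limsup_kN_k(s_k^{(\ell)},r_k^{(\ell+1)}/L)\to 0$ from \eqref{5.12i} kills the mass and the $L^1$ norm of $\eta_k$ near the origin, so the distributional equation extends across $x=0$. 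Elliptic regularity (\cite{mar1}, Corollary 8) makes $\eta^{(\ell+1)}$ smooth on all of $\R{2m}$, so $\eta_0^{(\ell+1)}:=\eta^{(\ell+1)}+\frac1{2m}\log\frac{\mu_0}{(2m-1)!}$ satisfies the normalized equation \eqref{eqliou} with finite total curvature.

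Finally one invokes the classification from \cite{mar1}: either $\eta_0^{(\ell+1)}$ is a standard bubble $\log\frac{2\sigma}{1+\sigma|x-x_0|^2}$ — and then automatically $\int_{\R{2m}}(2m-1)!e^{2m\eta_0^{(\ell+1)}}\,dx=\Lambda_1$ — or $m>1$ and $\Delta^j\eta_0^{(\ell+1)}\to a\neq 0$ at infinity for some $1\leq j\leq m-1$. The second alternative is ruled out by comparing the resulting lower bound $(Lr_k)^{2j-2m}\bar u_k(r_k)\int_{B_{Lr_k}\setminus B_{r_k/L}}|\nabla^{2j}u_k|\,dx\geq CL^{2j}+o(1)$ (which diverges as $L\to\infty$) against the uniform upper bound from the Lemma \ref{lemma3}-analogue, $(Lr_k)^{2j-2m}\bar u_k(r_k)\int_{B_{Lr_k}\setminus B_{r_k/L}}|\nabla^{2j}u_k|\,dx\leq C$. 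I expect the main obstacle to be the bookkeeping in Step 1 and the singularity-removal step: controlling $\|(-\Delta)^m\eta_k\|_{L^\infty}$ on annuli and the $L^1$-mass near $0$ now requires \eqref{eq57}, \eqref{eq58}, \eqref{eq59} to transfer the pointwise and integral bounds from $\bar u_k$ back to $u_k$, whereas in the radial case these held trivially. Everything else is a faithful transcription of the radial argument with $u_k(r)$ replaced by $\bar u_k(r)$.
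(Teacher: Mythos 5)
Your proposal is correct and follows the route the paper implicitly intends: the paper itself does not prove Proposition~\ref{prop5.1g} directly, but states it is a special case of Proposition~\ref{blow0}, whose proof is in turn said to carry over from \cite{str}, Proposition~4.7; that carried-over proof is precisely the adaptation of the radial Proposition~\ref{blow} that you spell out, with Proposition~\ref{grad} and \eqref{eq57}--\eqref{eq59} supplying control of the angular oscillation of $u_k$ and with \eqref{5.12h}, \eqref{5.12i} replacing \eqref{eq34}, \eqref{eq33}. Your outline therefore supplies the details the paper leaves implicit rather than taking a genuinely different path, and the key steps (Lemma~\ref{lemma2.1}/\ref{lemma3} analogues with $\bar u_k$ in place of $u_k$, the $h_k+l_k$ decomposition, the removal of the singularity at $0$ via \eqref{5.12i}, and the classification/contradiction with case (ii)) line up with what the paper expects.
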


From Proposition \ref{prop5.1g} the desired energy quantization result at the scale
$r_k^{(\ell+1)}$ follows as in the radial case.

If $\rho_k \ge \rho_0 > 0$ we can argue as in \cite{str}, p.~416,
to obtain numbers $s_k^{(\ell+1)}$ satisfying
\begin{equation}\label{5.12a}
  \lim_{L\rightarrow \infty}
  \lim_{k\rightarrow \infty} \Lambda_k(s_k^{(\ell+1)})=(\ell+1)\Lambda_1,
\end{equation}
and such that 
\begin{equation*}
  \lim_{L\rightarrow \infty}
  \lim_{k\rightarrow \infty} (\Lambda_k(s_k^{(\ell+1)})-\Lambda_k(Lr_k^{(\ell+1)}))
  =\lim_{k\rightarrow \infty} \frac{r_k^{(\ell+1)}}{s_k^{(\ell+1)}}\nonumber\\
  =\lim_{k\rightarrow \infty}s_k^{(\ell+1)}=0,
\end{equation*}
while $\bar{u}_k(s_k^{(\ell+1)})\rightarrow \infty$ as $k \rightarrow \infty$.
Moreover, for any $L \geq 1$ we have
\begin{equation}\label{5.12b}
   \lim_{k\rightarrow \infty} \frac{\bar{u}_k(s_k^{(\ell+1)})}{\bar{u}_k(Lr_k^{(\ell+1)})}=0.
\end{equation}
By iteration we then establish \eqref{5.12a}, \eqref{5.12b} up to $\ell +1 = \ell_0$ for some
maximal index $\ell_0 \ge 1$ where Case A occurs and thus complete the concentration analysis
near the point $x^{(i)}$, getting
$$\lim_{k\to\infty}\Lambda_k(\rho_0)= \ell_0 \Lambda_1.$$

If $\rho_k\rightarrow 0$ as $k \rightarrow \infty$, we distinguish the following two cases.
In {\bf Case 1} for some $\varepsilon_0\in]0,1[$ and all $t \in [r_k^{(\ell+1)},\rho_k]$ there
holds $\bar{u}_k(t)\ge\varepsilon_0\bar{u}_k(r_k^{(\ell+1)})$. The decay estimate
that we established in Lemma \ref{lemma2ter} then remains valid throughout this range
and \eqref{5.12a} holds true for any choice $s_k^{(\ell+1)}=o(\rho_k)$.
Again the concentration analysis at scales up to $o(\rho_k)$ is complete.
In {\bf Case 2}, for any $\varepsilon\in]0,1[$ there is a minimal
$T_k=T_k(\varepsilon)\in [r_k^{(\ell+1)},\rho_k]$ as in Lemma \ref{lemma2ter} such that
$\bar{u}_k(T_k)=\varepsilon\bar{u}_k(r_k^{(\ell+1)})$.
Then as before we can define numbers $s_k^{(\ell+1)} < \rho_k$ with
$\bar{u}_k(s_k^{(\ell+1)})\rightarrow \infty$ as $k \rightarrow \infty$
so that \eqref{5.12a}, \eqref{5.12b} also hold true, and we proceed
by iteration up to some maximal index $\ell_0 \ge 1$ where either Case 1
or Case A holds with final radii $r_k^{(\ell_0)}$, $s_k^{(\ell_0)}$, respectively.

For the concentration analysis at the scale $\rho_k$
first assume that for some number $L\ge 1$ there is a sequence $(x_k)$ such that
$\rho_k/L \le R_k(x_k)\le |x_k|\le L\rho_k$ and
\begin{equation}\label{sub}
 \lambda_k |x_k|^{2m} u_k^2(x_k)e^{mu_k^2(x_k)}\ge \nu_0>0.
\end{equation}
By Proposition \ref{grad} we may assume that $|x_k|=\rho_k$. Moreover, \eqref{eq57} implies that
$\dist(0,\de\Omega_k)/\rho_k \to\infty$ as $k\to\infty$.
As in \cite{str}, Lemma 4.6, we then have 
$\bar{u}_k(\rho_k)/\bar{u}_k(r_k^{(\ell_0)}) \rightarrow 0$ as $k\rightarrow \infty$,
ruling out Case 1; that is, at scales up to $o(\rho_k)$ we end with Case A.
The desired quantization result at the scale $\rho_k$ then is a consequence of the
following result similar to \cite{str}, Proposition 4.7, whose proof may be easily carried 
over to the present situation.

\begin{prop} \label{blow0}
Assuming \eqref{sub}, there exist a finite set $S_0\subset \mathbb{R}^{2m}$ and a subsequence
$(u_k)$ such that
\begin{equation*}
 \eta_k(x) :=
 u_k(x_k)(u_k(\rho_k x ) - u_k(x_k)) \rightarrow \eta(x)
\end{equation*}
in $C^{2m-1}_{\loc}({\mathbb R}^{2m}\setminus S_0)$ as $k \rightarrow \infty$, where
for a suitable constant $c_0$ the function $\eta_0=\eta+c_0$ solves \eqref{eqliou}, \eqref{etak2}.
\end{prop}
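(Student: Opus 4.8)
\textbf{Proof plan for Proposition \ref{blow0}.}
The plan is to proceed in close analogy with the proof of Proposition \ref{blow} given above, now performed at the scale $\rho_k$ rather than at a secondary concentration scale $r_k^{(\ell+1)}=o(\rho_k)$. First I would set $\eta_k(x)=u_k(x_k)(u_k(\rho_k x)-u_k(x_k))$ and rescale equation \eqref{eq0} to obtain
$$(-\Delta)^m\eta_k(x)=\lambda_k\rho_k^{2m}u_k(x_k)u_k(\rho_k x)e^{mu_k^2(\rho_k x)}=:g_k(x)\geq 0.$$
Because $|x_k|=\rho_k$ and $\rho_k/L\le R_k(x_k)\le L\rho_k$, the blow-up bound \eqref{eq9} of Theorem \ref{trm1} yields a uniform bound $\lambda_k(\rho_k|x|)^{2m}u_k^2(\rho_k x)e^{mu_k^2(\rho_k x)}\le C$ on every compact set away from the (finitely many) points $y$ where some rescaled concentration point $x_k^{(j)}/\rho_k$ may accumulate; collect these into a finite set $S_0\subset\R{2m}$. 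The normalization \eqref{sub} together with Proposition \ref{grad} (which controls $|u_k(\rho_k x)-u_k(x_k)|$ on annuli, hence $\overline u_k(x):=u_k(\rho_k x)/u_k(x_k)\to 1$ locally uniformly on $\R{2m}\setminus S_0$) then forces $g_k$ to stay bounded away from zero somewhere, so the relevant limiting constant $\mu_0$ is positive.

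Next, exactly as in Step 1 of the proof of Proposition \ref{blow}, I would establish $C^{2m-1,\alpha}_{\loc}(\R{2m}\setminus S_0)$-convergence $\eta_k\to\eta$. On annular regions $\Omega_L=B_L\setminus B_{1/L}$ (and more generally on compact subsets of $\R{2m}\setminus S_0$) the polyharmonic decomposition $\eta_k=h_k+l_k$, with $l_k$ solving the inhomogeneous Navier problem and $h_k$ polyharmonic, gives smooth convergence of $l_k$ from the $L^\infty$ bound on $(-\Delta)^m\eta_k$, while Proposition \ref{grad} (applied through the analogue of Lemma \ref{lemma3}) bounds $\|\nabla h_k\|_{L^1}$; the boundary condition $\eta_k=0$ on $\partial B_1$ (after recentering) and a Poincaré-type inequality bound $\|h_k\|_{L^1}$, and then Proposition \ref{c2m} upgrades this to $C^j$ bounds. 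Passing to the limit in the rescaled equation $(-\Delta)^m\eta_k=\mu_k\overline u_k e^{m(\overline u_k+1)\eta_k}$ with $\mu_k=\omega_{2m-1}^{-1}P_k(\rho_k)\to\mu_0>0$ (positivity of $\mu_0$ is where \eqref{sub} enters) shows $\eta$ solves $(-\Delta)^m\eta=\mu_0 e^{2m\eta}$ on $\R{2m}\setminus S_0$.

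It then remains to remove the singular set $S_0$, to show $e^{2m\eta}\in L^1(\R{2m})$, and to rule out the non-spherical solutions of \eqref{eq5.2}. For the first two points I would argue as in Step 2 of the proof of Proposition \ref{blow}: the energy bound $\int_{\Omega_L}e^{2m\eta}dx\le\mu_0^{-1}\liminf_k\int_{B_{L\rho_k}}u_k(-\Delta)^m u_k\,dx\le\mu_0^{-1}\Lambda$ together with Fatou gives $e^{2m\eta}\in L^1$; near each point of $S_0$, since that point is a genuine concentration point at a strictly smaller scale, the mass $\int_{B_{r/\rho_k}(x_k^{(j)}/\rho_k)}\lambda_k u_k(x_k)u_k e^{mu_k^2}dx$ is controlled and tends to $0$ as the radius shrinks (using the already-established quantization $\Lambda_k\to$ integer multiples of $\Lambda_1$ on balls of radius $o(\rho_k)$, or directly $N_k(s_k^{(\ell_0)},\rho_k/L)\to0$ from Case A and \eqref{5.12k}), so the singularities are removable in the distributional sense, giving a smooth solution of \eqref{eq5.2} on all of $\R{2m}$. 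Finally, the classification in \cite{mar1} leaves only the spherical solution (i) versus the solutions (ii) with $\Delta^j\eta_0\to a\neq0$; the latter are incompatible with the gradient estimate of Proposition \ref{grad} via the lower bound \eqref{eqa} (written now with $\rho_k$ in place of $r_k$ and with the recentered $u_k(x_k)$), exactly as in the radial case. Hence $\eta_0=\eta+c_0$ is the standard bubble and $\int_{\R{2m}}(2m-1)!e^{2m\eta_0}dx=\Lambda_1$.

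The main obstacle is the handling of the singular set $S_0$: one must simultaneously (a) verify that $\overline u_k\to1$ and the local $C^{2m-1}$-convergence genuinely persist on all of $\R{2m}\setminus S_0$ rather than only on annuli around the origin — here Proposition \ref{grad} is the crucial input, since it is precisely the tool that controls $u_k$ relative to $u_k(x_k)$ on the regions between concentration points — and (b) show that no energy is lost into the points of $S_0$ when passing to the limit, which is where the inductively established quantization at scales $o(\rho_k)$ (Case A, \eqref{5.12k}) and the Pohozaev-type bound of Lemma \ref{keybis} must be invoked to guarantee that the distributional equation has no spurious measure part supported on $S_0$. Once these two points are secured, the rest of the argument is a direct transcription of the proof of Proposition \ref{blow}.
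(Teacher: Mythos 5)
Your proposal follows the same approach the paper takes: the paper does not give a detailed proof of Proposition~\ref{blow0} but states that it is obtained by carrying over the proof of \cite{str}, Proposition~4.7, which is structurally the same argument as the proof of Proposition~\ref{blow} adapted to the scale $\rho_k$ with the finite exceptional set $S_0$ replacing $\{0\}$. Your three-stage outline (local $C^{2m-1,\alpha}$ convergence away from $S_0$ via the polyharmonic decomposition and Proposition~\ref{grad}; removal of the finitely many singularities using the already-established quantization at scales $o(\rho_k)$; ruling out case (ii) of the classification in \cite{mar1} via the growth incompatibility with the gradient bound) is the right route and mirrors the paper's strategy.

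One place where you should be a little more careful is the removal of the singularities at points $y_0\in S_0\setminus\{0\}$, say $y_0=\lim_k x_k^{(j)}/\rho_k$. There, the coefficient of a possible Dirac mass is controlled by
\[
\int_{B_{\ve\rho_k}(x_k^{(j)})}\lambda_k u_k(x_k)u_k e^{mu_k^2}\,dx
=\frac{u_k(x_k)}{u_k(x_k^{(j)})}\int_{B_{\ve\rho_k}(x_k^{(j)})}\lambda_k u_k(x_k^{(j)})u_k e^{mu_k^2}\,dx ,
\]
and while the second factor is bounded by the analogue of \eqref{3.1}, the vanishing of the prefactor $u_k(x_k)/u_k(x_k^{(j)})$ is not an automatic consequence of ``quantization at scales $o(\rho_k)$'' alone. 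It requires running, for each $x_k^{(j)}\in X_{k,1}$, the analogue of the Case~A conclusion $\bar u_k(\rho_k^{(j)})/\bar u_k(r_k^{(\ell_0^{(j)})})\to 0$ (as in \cite{str}, Lemma~4.6), combined with the gradient bound of Proposition~\ref{grad} to compare $u_k(x_k)$ with $\bar u_k^{(j)}(\rho_k^{(j)})$. The paper acknowledges exactly this by ``carrying out the above blow-up analysis up to scales of order $o(\rho_k)$ also on all balls of center $x_k^{(j)}\in X_{k,1}$.'' Making this step explicit would close the only real gap in your outline; everything else is a faithful transcription of the proof of Proposition~\ref{blow}.
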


By Proposition \ref{blow0} in case of \eqref{sub} there holds
\begin{equation}\label{3.0}
   \lim_{L\rightarrow \infty} \lim_{k\rightarrow \infty}
   \int_{\{x\in \Omega; \frac{\rho_k}{L}\le R_k(x) \le |x| \le L\rho_k\}} e_k dx
   =\Lambda_1.
\end{equation}
Letting
\begin{equation*}
  X_{k,1}=X_{k,1}^{(i)}=\{ x_k^{(j)}; \exists C>0: |x_k^{(j)}|\le C\rho_k\
  \ \text{for all} \ \ k\}
\end{equation*}
and carrying out the above blow-up analysis up to scales of order $o(\rho_k)$
also on all balls of center $x_k^{(j)}\in X_{k,1}$, then from \eqref{5.12b} and \eqref{3.0} we have
\begin{equation*}
  \lim_{L\rightarrow \infty}\lim_{k\rightarrow \infty}
  \Lambda_k (L\rho_k)= \Lambda_1(1+I_1),
\end{equation*}
where $I_1$ is the total number of bubbles concentrating at the points
$x_k^{(j)}\in X_{k,1}^{(i)}$ at scales $o(\rho_k)$.

On the other hand, if \eqref{sub} fails to hold clearly we have
\begin{equation}\label{sub1}
   \lim_{L\rightarrow \infty} \limsup_{k\rightarrow \infty}
   \int_{\{x\in \Omega; \frac{\rho_k}{L}\le R_k(x) \le |x| \le L\rho_k\}} e_k dx=0,
\end{equation}
and the energy estimate at the scale $\rho_k$ again is complete.

In order to deal with secondary concentrations around $x_k^{(i)}=0$ at scales
exceeding $\rho_k$, with $X_{k,1}$ defined as above we let
\begin{equation*}
  \rho_{k,1}=\rho_{k,1}^{(i)}=\min\big\{\inf_{\{j; x_k^{(j)}\notin X_{k,1}\}}\frac{|x_k^{(j)}|}{2},\ 
  \dist(0,\partial \Omega_k)\big\};
\end{equation*}
that is, we again set $\rho_{k,1}= \dist(0,\partial \Omega_k)$,
if $\{j; x_k^{(j)} \notin X_{k,1}\}=\emptyset$. From this definition
it follows that $\rho_{k,1}/\rho_k\rightarrow \infty$ as $k\rightarrow \infty$.
Then, using the obvious analogue of Lemma \ref{key2bis}, either we have
\begin{equation*}
  \lim_{L\rightarrow \infty} \limsup_{k\rightarrow \infty}
  N_k\Big(L\rho_k,\frac{\rho_{k,1}}{L}\Big)=0,
\end{equation*}
and we iterate to the next scale; or there exist radii $t_k \le \rho_{k,1}$ such that
$t_k/\rho_k\rightarrow \infty$, $t_k/\rho_{k,1}\rightarrow 0$
as $k\rightarrow \infty$ and a subsequence $(u_k)$ such that
\begin{equation}\label{sub1a}
  P_k(t_k)\ge \nu_0>0\ \text{ for all } k.
\end{equation}
The argument then depends on whether \eqref{sub} or \eqref{sub1} holds.
In case of \eqref{sub}, as in \cite{str}, Lemma 4.6, the bound \eqref{sub1a} and
Proposition \ref{blow} imply that $\bar{u}_k(t_k)/\bar{u}_k(\rho_k)\rightarrow 0$
as $k\rightarrow 0$. Then we can argue as in Case A for
$r\in [L\rho_k,\rho_{k,1}]$ for sufficiently large $L$, and we can continue as before
to resolve concentrations in this range of scales.

In case of \eqref{sub1} we further need to distinguish whether Case A or Case 1 holds
at the final stage of our analysis at scales $o(\rho_k)$.
In fact, for the following estimates we also consider all
points $x_k^{(j)}\in X_{k,1}^{(i)}$ in place of $x_k^{(i)}$.
Recalling that in Case A we have \eqref{5.12b} (with index $\ell_0$ instead of $\ell+1$) 
and \eqref{5.12k}, on account of \eqref{sub1}
for a suitable sequence of numbers $s_{k,1}^{(0)}$ such that
$s_{k,1}^{(0)}/\rho_k\rightarrow \infty$, $t_k/s_{k,1}^{(0)}\rightarrow \infty$ as
$k\rightarrow \infty$ we find
\begin{equation*}
 \lim_{L\rightarrow \infty} \lim_{k\rightarrow \infty}
 \Big(\Lambda(s_{k,1}^{(0)})-\sum_{x_k^{(j)}\in X_{k,1}^{(i)}}
 \Lambda_k^{(j)}(Lr_k^{(\ell_0^{(j)})})\Big)=0,
\end{equation*}
where $\Lambda_k^{(j)}(r)$ and $r_k^{(\ell_0^{(j)})}$ are computed as above with respect
to the concentration point $x_k^{(j)}$.
In particular, with such a choice of $s_{k,1}^{(0)}$ we find the
intermediate quantization result
\begin{equation*}
  \lim_{k\rightarrow \infty} \Lambda_k (s_{k,1}^{(0)})=\Lambda_1 I_1
\end{equation*}
analogous to \eqref{5.12a}, where $I_1$ is defined as above.
In Case 1 we can obtain the same conclusion by our earlier reasoning.
Moreover, in Case 1 we can argue as in \cite{str}, Lemma 4.8, to conclude that 
$\bar{u}_k(t_k)/\bar{u}_k(Lr_k^{(\ell_0^{(j)})})\rightarrow 0$ for any $L\geq 1$
as $k\rightarrow 0$; therefore, similar to \eqref{5.12b} in Case A, 
we can achieve that for any $L\geq 1$ we have
\begin{equation*}
 \lim_{k\rightarrow \infty} \frac{\bar{u}_k(s_{k,1}^{(0)})}
      {\bar{u}_k(Lr_k^{(\ell_0^{(j)})})}
 =\lim_{k\rightarrow \infty} \frac{r_k^{(\ell_0^{(j)})}}{s_{k,1}^{(0)}}
 =\lim_{k\rightarrow \infty} \frac{\rho_k}{s_{k,1}^{(0)}}
 =\lim_{k\rightarrow \infty} \frac{s_{k,1}^{(0)}}{t_k}=0
\end{equation*}
for all $x_k^{(j)}\in X_{k,1}^{(i)}$ where Case 1 holds, similar to $(H_{\ell})$.

We then finish the argument by iteration. For $\ell\ge 2$ we inductively define the sets
\begin{equation*}
 X_{k,\ell}=X_{k,\ell}^{(i)}=\{ x_k^{(j)}; \exists C>0: |x_k^{(j)}|\le C\rho_{k,\ell-1}\
 \text{ for all } k\}
\end{equation*}
and we let
\begin{equation*}
  \rho_{k,\ell}=\rho_{k,\ell}^{(i)}=\min\big\{\inf_{\{j; x_k^{(j)}\notin X_{k,1}\}}\frac{|x_k^{(j)}|}{2},\ 
  \dist(0,\partial \Omega_k)\big\};
\end{equation*}
that is, as before, we set $\rho_{k,\ell}= \dist(0,\partial \Omega_k)$, if
$\{j; x_k^{(j)}\notin X_{k,\ell}^{(i)}\}=\emptyset$. Iteratively performing
the above analysis
at all scales $\rho_{k,\ell}$, thereby exhausting all concentration points $x_k^{(j)}$,
upon passing to further subsequences, we finish the proof of Theorem \ref{trm2}.

\subsection{Proof of Proposition \ref{grad}}

Our proof of Proposition \ref{grad} is modelled on the proof of \cite{dru}, Proposition 2.
In fact, the first steps of the proof seem almost identical to the corresponding arguments
in \cite{dru}. The special character of the present problem only enters at the last stage, 
where we also need to distinguish the cases $\ell=1$ and $2\leq \ell\leq 2m-1$.

Fix any index $1 \le \ell \le 2m-1$. The following constructions will depend on this 
choice; however, for ease of notation we suppress the index $\ell$ in the sequel.

Set $R_k(x):=\inf_{1\leq j\leq I} |x-x_k^{(j)}|$ and choose points $y_k$
such that
$$R_k^{\ell}(y_k)u_k(y_k)|\nabla^{\ell} u_k (y_k)|
=\sup_{\Omega} R_k^{\ell} u_k|\nabla^{\ell} u_k|=:L_k.$$
Suppose by contradiction that $L_k\to\infty$ as $k\to\infty$. From Theorem \ref{trm1} 
then it follows that $s_k:=R_k(y_k)\to 0$ as $k\to\infty$. Set
$$\Omega_k:=\{y; y_k+s_k y\in\Omega\}$$
and let
$$v_k(y):= u_k(y_k+s_ky),\quad y\in\Omega_k.$$
Observe that for $1\le j \le m$ via Sobolev's embedding from \eqref{eq1} we obtain
\begin{equation}\label{eq72}
\|\nabla^j v_k\|^2_{L^{\frac{2m}{j}}(\Omega_k)}
\leq C\|\nabla^m v_k\|^2_{L^2(\Omega_k)}
= C \int_{\Omega_k} v_k(-\Delta)^m v_k dx\leq C.
\end{equation}
Also let
$$y_k^{(i)}:=\frac{x_k^{(i)}-y_k}{s_k},\ 1\leq i\leq I$$
and set 
$$S_k:=\{y_k^{(i)};1\leq i\leq I\}.$$

Clearly then we have
$$\dist(0,S_k)=\inf_{1\leq i\leq I}|y_k^{(i)}|=1$$
and
\begin{equation}\label{eq73}
\sup_{y\in\Omega_k}( \dist(y,S_k)^{\ell}v_k(y)|\nabla^{\ell} v_k(y)|)
= v_k(0)|\nabla^{\ell} v_k(0)|=L_k\to\infty
\end{equation}
as $k\to\infty$. Moreover \eqref{eq9} implies 
\begin{equation}\label{eq74}
0\leq v_k(-\Delta)^m v_k=\lambda_k s_k^{2m}v_k^2e^{mv_k^2}\leq \frac{C}{\dist(y,S_k)^{2m}}.
\end{equation}

Since $\lim_{k\to \infty }s_k=0$, we may assume that as $k\to\infty$ the domains $\Omega_k$ 
exhaust a half-space
$$\Omega_0=\R{2m-1}\times]-\infty,R_0[, $$
where $0<R_0\leq\infty$. 
We may also assume that either $\lim_{k\to\infty} |y_k^{(i)}|=\infty$ or 
$\lim_{k\to\infty} y_k^{(i)}=y^{(i)}$, $1\leq i\leq I$, and we let $S_0$ be the set of these
accumulation points of $S_k$, satisfying $\dist(0,S_0)=1$. For $R>0$ denote 
$$K_{k,R}:= \Omega_k\cap B_R(0)\backslash \bigcup_{y\in S_0}\overline{B_{1/R}(y)}.$$ 
Observing that $\lambda_k s_k^{2m}\to 0$, from \eqref{eq74} we obtain that
\begin{equation}\label{eq75}
\lim_{k\to\infty}\|\Delta^m v_k\|_{L^\infty(K_{k,R})}=0 \quad\text{for every }R>0.
\end{equation}

\begin{lemma}\label{lemma16} We have $R_0=\infty$, hence $\Omega_0=\R{2m}$.
\end{lemma}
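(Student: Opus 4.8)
The plan is to argue by contradiction: suppose $R_0<\infty$, so the rescaled domains $\Omega_k$ exhaust a genuine half-space $\Omega_0=\R{2m-1}\times]-\infty,R_0[$ with finite $R_0$, and the rescaled functions $v_k$ satisfy the Dirichlet conditions $v_k=\partial_\nu v_k=\ldots=\partial_\nu^{m-1}v_k=0$ on $\partial\Omega_k$, which in the limit forces all derivatives of the limiting function up to order $m-1$ to vanish on the hyperplane $\{x^{2m}=R_0\}$. The idea is that near this flat piece of boundary the equation \eqref{eq74} together with \eqref{eq75} gives good control: on compact subsets of $\overline{\Omega_0}$ away from $S_0$ the right-hand side of $(-\Delta)^m v_k$ tends to zero in $L^\infty$, and the Dirichlet data are all zero, so $v_k$ converges (along a subsequence, in $C^{2m-1}_{\loc}(\overline{\Omega_0}\setminus S_0)$) to a polyharmonic function $v$ on $\Omega_0$ vanishing to order $m-1$ on the boundary hyperplane.

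First I would set up the convergence: from the uniform bounds \eqref{eq72} (Sobolev, hence $\nabla^j v_k$ bounded in $L^{2m/j}$ locally) and \eqref{eq75}, standard elliptic estimates for the polyharmonic operator with the homogeneous Dirichlet data give local $C^{2m-1,\alpha}$ bounds on $K_{k,R}$ uniformly in $k$; so after passing to a subsequence $v_k\to v$ in $C^{2m-1}_{\loc}(\overline{\Omega_0}\setminus S_0)$ with $\Delta^m v=0$ in $\Omega_0\setminus S_0$, and since the singular set $S_0$ has zero $H^m$-capacity (it is a finite set of points), $\Delta^m v=0$ in all of $\Omega_0$, with $v=\partial_\nu v=\ldots=\partial_\nu^{m-1}v=0$ on $\partial\Omega_0$. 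The next step is to extract the contradiction with \eqref{eq73}: passing to the limit in \eqref{eq73}, at the origin we have $v(0)|\nabla^\ell v(0)|=\lim_k L_k=\infty$ — but $v$ is a fixed smooth (polyharmonic) function on a neighborhood of $0$, so $v(0)|\nabla^\ell v(0)|$ is finite, a contradiction, provided the convergence at $0$ is strong enough. The only way this can fail is if $0\in S_0$, but $\dist(0,S_0)=1$ rules that out; and one also must make sure the limit $v$ is not identically forced to blow up — here the point is that the left side of \eqref{eq73} is the supremum, so each $v_k$ is uniformly controlled on $K_{k,R}$ by $L_k\cdot\dist(y,S_0)^{-\ell}$, which after normalization stays bounded, so in fact the rescaled quantities $v_k|\nabla^\ell v_k|\dist(\cdot,S_k)^\ell$ are bounded by $L_k$ and the issue is purely about whether $v(0)|\nabla^\ell v(0)|$ can be infinite for the limit.

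Actually the cleanest route, and the one I expect the paper to take, is to normalize differently: since $L_k\to\infty$, consider $\tilde v_k:=v_k/v_k(0)$ (or an analogous renormalization making the relevant quantity order one) — but since we don't yet know $v_k(0)\to\infty$, the robust argument is to observe that on $K_{k,R}$ we have the bound $v_k|\nabla^\ell v_k|\le C_R L_k$, while the equation forces $\Delta^m v_k\to 0$; decompose $v_k$ on a fixed half-ball touching $\partial\Omega_0$ into a polyharmonic part plus a part controlled by $\|\Delta^m v_k\|_\infty$, use the vanishing Dirichlet data, and conclude that on this half-ball $v_k$ converges in $C^{2m-1}$ to a function $v$ which is polyharmonic and has all its normal derivatives up to order $m-1$ vanishing on the flat boundary; then a unique-continuation / Liouville-type argument (together with the finite energy $\int|\nabla^m v_k|^2\le C$) shows $v$ must be constant or grow at most polynomially, and in any case $v(0)|\nabla^\ell v(0)|<\infty$, contradicting \eqref{eq73}. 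The main obstacle, and the place where I would spend the most care, is exactly this last step: ruling out that the boundary hyperplane could produce, in the limit, a nontrivial polyharmonic function whose product $v(0)|\nabla^\ell v(0)|$ is forced to be infinite — in other words, showing that the flat Dirichlet boundary cannot "absorb" the blow-up. This is where the finiteness of $R_0$ is used essentially: if $R_0=\infty$ there is no boundary and the argument would be entirely different (that is the content of the rest of Section 3), whereas with $R_0<\infty$ the rigidity of the homogeneous Dirichlet conditions on a hyperplane — together with finite Dirichlet energy — leaves no room for a nontrivial bubble, which is the contradiction that proves $R_0=\infty$.
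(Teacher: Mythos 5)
Your high-level plan (rescale, use the vanishing Dirichlet data on the flat boundary, show the $m$-polyharmonic limit is too regular to sustain the blow-up) matches the spirit of the paper's argument, but there is a genuine gap in the compactness step, and it sits exactly where you flag the ``main obstacle.''

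The difficulty is not, as you phrase it, whether the limit $v$ might have $v(0)|\nabla^\ell v(0)|$ ``forced to be infinite.'' If $v_k\to v$ in $C^{2m-1}_{\loc}$ near $0$ then $v_k(0)|\nabla^\ell v_k(0)|\to v(0)|\nabla^\ell v(0)|<\infty$ automatically, contradicting $L_k\to\infty$; there is nothing subtle to rule out at that stage. The real issue is that you have no uniform bound on $v_k$ itself, so the convergence you invoke cannot be justified. The estimate \eqref{eq72} only controls $\nabla^j v_k$ in $L^{2m/j}$, and in dimension $2m$ the bound $\|\nabla v_k\|_{L^{2m}}\le C$ is exactly critical: it gives no $L^\infty$ (or even $L^p$) control of $v_k$ on a fixed compact set, even with $v_k=0$ on $\partial\Omega_k$. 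So ``standard elliptic estimates give local $C^{2m-1,\alpha}$ bounds uniformly in $k$'' is not available for the unnormalized $v_k$, and the argument cannot start.

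The paper's fix is the normalization you were circling around but did not land on: set $w_k:=v_k/\sqrt{v_k(0)|\nabla^\ell v_k(0)|}=v_k/\sqrt{L_k}$. The point of the square root is that \eqref{eq73} bounds the quadratic quantity $v_k|\nabla^\ell v_k|$, and since $\partial_\nu^j v_k^2=0$ on $\partial\Omega_k$ for $0\le j<\ell$ (a consequence of the order-$m$ Dirichlet data), a Taylor expansion of $v_k^2$ from the flat boundary at distance $\le 2R_0$ gives $\sup_{K_{k,R}}v_k^2\le C(R)L_k$, i.e.\ $0\le w_k\le C$ on $K_{k,R}$. Combined with $\|\nabla w_k\|_{L^{2m}}+\|\nabla^2 w_k\|_{L^{m}}+\|\Delta^m w_k\|_{L^\infty(K_{k,R})}\to 0$ (here the division by $\sqrt{L_k}\to\infty$ does the work) and the homogeneous boundary data, elliptic regularity gives $w_k\to 0$ in $C^{2m-1,\alpha}_{\loc}$, contradicting $w_k(0)|\nabla^\ell w_k(0)|\equiv 1$. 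Your proposal, by normalizing by $v_k(0)$ (which might not blow up) or by not normalizing at all, never obtains the $L^\infty$ control needed for compactness, so the contradiction is not reached. The capacity / Liouville remarks about the limit $v$ are not needed once the normalization is in place, since the normalized sequence already converges to zero.
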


\begin{proof} Suppose by contradiction that $R_0<\infty$. Choosing $R=2R_0$ and observing 
that by \eqref{1.2} for $0 \le j < \ell \le 2m-1$ we have $\partial_{\nu}^jv_k^2=0$ 
on $\partial \Omega_k$, from Taylor's formula and \eqref{eq73} we conclude
$$\sup_{K_{k,R}}\frac{v_k^2}{v_k(0)|\nabla^{\ell} v_k(0)|}\leq C=C(R).$$
Letting $w_k:=\frac{v_k}{\sqrt{v_k(0)|\nabla v_k(0)|}}$, we then have $0\leq w_k\leq C$ on $K_{k,R}$. 
Using \eqref{eq72}, Sobolev's embedding, \eqref{eq73} and \eqref{eq75} we infer
$$\|\nabla w_k\|_{L^{2m}(\Omega_k)}+\|\nabla^2 w_k\|_{L^m(\Omega_k)}
+\|\Delta^m w_k\|_{L^\infty(K_{k,R})}\to 0\ \text{ as }k\to\infty.$$
Since $\de_\nu^jw_k=0$ on $\de \Omega_k$ for $0\leq j\leq m-1$, it follows from elliptic 
regularity that $w_k\to 0$ in $C^{2m-1,\alpha}_{\loc}(K_{k,R})$ for $0<\alpha<1$, 
contradicting the fact that $w_k(0)|\nabla^{\ell} w_k(0)|=1$.
\end{proof}

\begin{lemma}\label{lemma5.2} As $k\to\infty$ we have $v_k(0)\to\infty$ and
$$\frac{v_k}{v_k(0)}\to 1\ \text{ in } C^{2m-1,\alpha}_{\loc}(\R{2m}\backslash S_0).$$
\end{lemma}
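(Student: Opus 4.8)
The plan is to prove the stronger statement that the oscillation $v_k-v_k(0)$ already tends to $0$ in $C^{2m-1,\alpha}_{\loc}(\R{2m}\setminus S_0)$; both assertions of the lemma then follow immediately, since $\nabla^\ell v_k(0)\to 0$ together with $v_k(0)|\nabla^\ell v_k(0)|=L_k\to\infty$ forces $v_k(0)\to\infty$, and then $v_k/v_k(0)=1+(v_k-v_k(0))/v_k(0)\to 1$.

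The first and main step is to show that $(v_k-v_k(0))$ is bounded in $C^{2m-1,\alpha}_{\loc}(\R{2m}\setminus S_0)$. I would fix a ball $B\subset\subset\R{2m}\setminus S_0$ — contained in $\Omega_k$ for large $k$ by Lemma \ref{lemma16} — and decompose $v_k=h_k+l_k$ on $B$, where $\Delta^m l_k=\Delta^m v_k$ with $l_k=\de_\nu l_k=\cdots=\de_\nu^{m-1}l_k=0$ on $\de B$, so that $\Delta^m h_k=0$. By \eqref{eq75} and elliptic estimates, $\|l_k\|_{C^{2m-1,\alpha}(B)}\to 0$; then \eqref{eq72} and Hölder's inequality give $\|\nabla h_k\|_{L^1(B)}\le\|\nabla v_k\|_{L^1(B)}+\|\nabla l_k\|_{L^1(B)}\le C$, and since $\nabla h_k$ is again polyharmonic, interior estimates for polyharmonic functions yield $\|\nabla h_k\|_{C^{j}(B')}\le C(j,B')$ for every $B'\subset\subset B$ and every $j$. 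Because $h_k(0)=v_k(0)-l_k(0)=v_k(0)+o(1)$, writing $v_k-v_k(0)=(h_k-h_k(0))+l_k-l_k(0)$ gives the desired local bound, with constants not depending on $\ell$.

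Next I would pass to a subsequence so that $v_k-v_k(0)\to\zeta$ in $C^{2m-1}_{\loc}(\R{2m}\setminus S_0)$ and show $\zeta\equiv0$. Since $\nabla^j(v_k-v_k(0))=\nabla^j v_k$ for $j\ge1$, the bounds \eqref{eq72} and the locally uniform convergence give $\nabla\zeta\in L^{2m}(\R{2m})$ and $\nabla^m\zeta\in L^2(\R{2m})$, so $\zeta\in W^{m,2}_{\loc}(\R{2m})$, while $\zeta(0)=0$. For $\varphi\in C^\infty_0(\R{2m}\setminus S_0)$ one has $\int\zeta\,\Delta^m\varphi\,dx=\lim_k\int v_k\,\Delta^m\varphi\,dx=\lim_k\int(\Delta^m v_k)\varphi\,dx=0$ by \eqref{eq75}, so $\Delta^m\zeta=0$ on $\R{2m}\setminus S_0$; as $S_0$ is finite and points carry zero $H^m$-capacity in dimension $2m$, this extends to all of $\R{2m}$ (Lemma \ref{lemcap}). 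A Liouville argument (Lemma \ref{trmliou}) based on $\nabla^m\zeta\in L^2(\R{2m})$ forces $\nabla^m\zeta\equiv0$, so $\zeta$ is a polynomial of degree $<m$; then $\nabla\zeta\in L^{2m}(\R{2m})$ forces $\zeta$ constant, and $\zeta(0)=0$ gives $\zeta\equiv0$. Since this applies along every subsequence, $v_k-v_k(0)\to0$ in $C^{2m-1}_{\loc}(\R{2m}\setminus S_0)$, and in fact in $C^{2m-1,\alpha}_{\loc}$ by the bound of the first step, completing the proof as indicated above.

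The hard part is the first step: $v_k$ itself need not be locally bounded (only its oscillation turns out to be), so one cannot apply elliptic regularity to $v_k$ directly, and the point is to feed the scale-invariant Sobolev bounds \eqref{eq72} into the gradient of the polyharmonic part $h_k$ rather than into $v_k$. The removability of the singular set $S_0$ in the second step is the other delicate point, and is handled by the capacity lemma in the appendix. Note that, unlike the final stage of the proof of Proposition \ref{grad}, this lemma requires no distinction between the cases $\ell=1$ and $2\le\ell\le 2m-1$.
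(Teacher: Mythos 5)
Your proof is correct, but it takes a genuinely different route from the paper's. The paper uses a short normalization trick: it sets $c_k := \sup_{B_{1/2}} v_k$, notes $c_k\to\infty$ (else \eqref{eq72}, \eqref{eq75} and elliptic regularity would bound $v_k$ in $C^{2m-1,\alpha}$ near $0$, contradicting \eqref{eq73}), and then observes that dividing by $c_k$ drives all the scale-invariant bounds to zero, so $w_k=v_k/c_k$ converges to a constant in $C^{2m-1,\alpha}_{\loc}(\R{2m}\setminus S_0)$; by the definition of $c_k$ the constant is $1$, whence $v_k(0)/c_k\to 1$, $v_k(0)\to\infty$ and $v_k/v_k(0)=w_k/w_k(0)\to 1$. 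You instead establish the strictly stronger oscillation estimate $v_k-v_k(0)\to 0$ by the decomposition $v_k=h_k+l_k$ (polyharmonic plus forced part) together with a Liouville-type analysis of the limit $\zeta$, using the capacity-removal of $S_0$ (Lemma \ref{lemcap}) and the $L^p$-Liouville theorem (Lemma \ref{trmliou}) --- very much in the spirit of the paper's own proofs of Lemma \ref{lemma2.1} and the final stage of Proposition \ref{grad}. All the ingredients you invoke are available, and both assertions of the lemma do drop out of your stronger conclusion, since $\nabla^\ell v_k(0)\to 0$ together with $v_k(0)|\nabla^\ell v_k(0)|=L_k\to\infty$ forces $v_k(0)\to\infty$. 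The trade-off is simply length: the paper's normalization delivers exactly what is needed in a few lines, while your route is longer but yields the sharper information $v_k-v_k(0)\to 0$ (analogous to what Lemma \ref{lemma2.1} proves in a related setting); the chain-of-balls step for local boundedness and the removability of $S_0$ are the slightly delicate points, but they are of the same nature as the ones the paper itself relies on elsewhere.
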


\begin{proof} First observe that
$$c_k:=\sup_{B_{1/2}}v_k \to \infty\quad \text{as }k\to\infty.$$
Indeed, otherwise \eqref{eq72}, \eqref{eq75} and elliptic regularity would contradict \eqref{eq73}.
Letting $w_k:=\frac{v_k}{c_k}$, from \eqref{eq72} and \eqref{eq75} for any $R>0$ we have
$$\|\nabla w_k\|_{L^{2m}(\Omega_k)}+\|\nabla^2 w_k\|_{L^{m}(\Omega_k)}
+\|\Delta^m w_k\|_{L^\infty (K_{k,R})} \to 0 \quad \text{as }k\to\infty,$$
whence $w_k\to w\equiv const$ in $C^{2m-1,\alpha}_{\loc}(\R{2m}\backslash S_0)$. 
Recalling that $\dist(0,S_0)=1$, we obtain
$$w\equiv \sup_{B_{1/2}} w=\lim_{k\to\infty}\sup_{B_{1/2}} w_k=1.$$
In particular we conclude that $\frac{v_k(0)}{c_k}=w_k(0)\to 1$ as $k\to\infty$ and 
therefore $v_k(0)=c_k w_k(0)\to\infty$, $\frac{v_k}{v_k(0)}=\frac{w_k}{w_k(0)}\to 1$ 
in $C^{2m-1,\alpha}_{\loc}(\R{2m}\backslash S_0)$, as claimed.
\end{proof}

For the final argument now we need to distinguish the cases $\ell=1$ and $2\leq \ell\leq 2m-1$.
Consider first the case $\ell=1$. Set
$$\tilde v_k(y):=\frac{v_k(y)-v_k(0)}{|\nabla v_k(0)|}.$$
From \eqref{eq73} and Lemma \ref{lemma5.2} we infer
\begin{equation}\label{eq76}
|\nabla \tilde v_k(y)|=\frac{v_k(0)}{v_k(y)}\frac{v_k(y)|\nabla v_k(y)|}{v_k(0)|\nabla v_k(0)|}
\leq\frac{1+o(1)}{\dist (y,S_0)},
\end{equation}
with error $o(1)\to 0$ in $C^{2m-1,\alpha}_{\loc}(\R{2m}\backslash S_0)$ as $k\to\infty$. 
Since $\tilde v_k(0)=0$, from \eqref{eq76} we conclude that $\tilde v_k$ is bounded in 
$C^1(K_{k,R})$ for every $R>0$, uniformly in $k$. 
Moreover, \eqref{eq74} and Lemma \ref{lemma5.2} give
\begin{equation}\label{eq77}
|\Delta^m \tilde v_k|=\frac{v_k(0)}{v_k}\frac{v_k|\Delta^{m}v_k|}{v_k(0)|\nabla v_k(0)|}
\leq C(R)\frac{v_k(0)}{L_kv_k}\to 0
\end{equation}
uniformly on $K_{k,R}$ as $k\to\infty$, for any $R>0$. The sequence $\tilde v_k$ then is bounded 
in $C^{2m-1,\alpha}_{\loc}(\R{2m}\backslash S_0)$ for any $\alpha < 1$, and by Arzel\`a-Ascoli's 
theorem we can assume that $\tilde v_k\to \tilde v$ in $C^{2m-1,\alpha}_{\loc}(\R{2m}\backslash S_0)$, 
where $\tilde v$ satisfies
\begin{equation}\label{eq78}
\Delta^m \tilde v=0,\quad \tilde v(0)=0,\quad |\nabla \tilde v(0)|=1,\quad |\nabla \tilde v(y)|\leq \frac{1}{\dist(y,S_0)}.
\end{equation}

Fix a point $x_0\in S_0$. For any $r\in ]0, \dist(x_0, S_0\setminus \{x_0\})/2[$ 
let $\varphi \in C^{\infty}_0(B_r(x_0))$ be a function $0 \le \varphi \le 1$ 
such that $\varphi \equiv 1$ in $B_{r/2}(x_0)$, and satisfying 
$|\nabla^j\varphi| \le Cr^{-j}$ for $0 \le j \le m$. Integration by parts yields
\begin{equation}\label{5.1}
 \begin{split}
   \int_{B_r(x_0)} (\nabla \varphi v_k\cdot \nabla \Delta^{m-1}v_k & + \varphi v_k\Delta^m v_k)dx\\
   & = - \int_{B_r(x_0)}\varphi \nabla  v_k \cdot \nabla \Delta^{m-1} v_k\; dx =: I.
 \end{split}
\end{equation}
Again integrating by parts $m-1$ times, we obtain
\begin{equation*}
  I = (-1)^{m} \int_{B_r(x_0)}\sum_{|\alpha| = m-1}\partial^{\alpha}(\varphi \nabla  v_k)
  \cdot \nabla\partial^{\alpha} v_k\; dx,
\end{equation*}
so that by H\"older's inequality and \eqref{eq72} this term may be bounded  
\begin{equation*}
 \begin{split}
  |I| & \le C \sum_{1\le j \le m} r^{j-m} \int_{B_r(x_0)}|\nabla^{j}v_k||\nabla^{m} v_k| dx \\
  & \le C\sum_{1\le j \le m}\|\nabla^{j}v_k\|_{L^{\frac{2m}{j}}}\|\nabla^{m} v_k\| _{L^2}
  \le C.
 \end{split}
\end{equation*}
Similarly, we have 
\begin{equation*}
   0 \le \int_{B_r(x_0)} \varphi v_k(-\Delta)^m v_k \; dx \le C,
\end{equation*}
and from \eqref{5.1} we conclude the bound 
\begin{equation}\label{5.2}
 \begin{split}
   \Big|\int_{B_r(x_0)} & \nabla \varphi v_k\cdot \nabla \Delta^{m-1}v_k dx\Big| \le C.
 \end{split}
\end{equation}
Observe that $\nabla \varphi = 0$ in $B_{r/2}(x_0)$. By Lemma \ref{lemma5.2} therefore
the integral on the left-hand side equals
\begin{equation*}
 \begin{split}
   \int_{B_r(x_0)} & \nabla \varphi v_k\cdot\nabla \Delta^{m-1}v_k dx \\
   & = (1+o(1))v_k(0)|\nabla v_k(0)|\int_{B_r(x_0)}  \nabla \varphi \cdot\nabla \Delta^{m-1}\tilde v_k dx\\
   & = -(1+o(1))v_k(0)|\nabla v_k(0)|\int_{B_r(x_0)}  \varphi \Delta^{m}\tilde v_k dx.
 \end{split}
\end{equation*}
Since $(-\Delta)^{m}\tilde v_k\geq 0$, it follows that
$$\int_{B_{r/2}(x_0)}(-\Delta)^{m}\tilde v_k dx \le \frac{C}{v_k(0)|\nabla v_k(0)|} 
= CL_k^{-1} \to 0 \ \textrm{ as } k\to\infty.$$
Recalling  \eqref{eq77}, we infer that $\Delta^m \tilde v_k\to 0$ in $L^1_{\loc}(\R{2m})$.
Therefore $\Delta^m \tilde v\equiv 0$ in $\R{2m}$. Since from \eqref{eq78} we have
$|\tilde v(y)|\leq C(1+|y|)$ for $y\in\R{2m}$, we may now invoke a Liouville-type theorem as
in \cite{mar1}, Theorem 5, to see that $\tilde v$ is a polynomial of degree at most $2m-2$
if $m>1$ and of degree at most $1$ if $m=1$. But then \eqref{eq78} implies that
$\tilde v\equiv 0$, contradicting the fact that $|\nabla \tilde v_k(0)|=1$. This completes
the proof in the case $\ell=1$.

In the case when $2\leq \ell\leq m-1$ we set
$$\tilde v_k(y):=\frac{v_k(y)-v_k(0)}{|\nabla^\ell v_k(0)|}.$$
As shown above we have
$$\dist(y,S_k)v_k(y)|\nabla v_k(y)|\leq C\sup_{x\in \Omega}R_k(x)u_k(x)|\nabla u_k(x)|\leq C;$$
hence Lemma \ref{lemma5.2} implies with error $o(1)\to 0$ in 
$C^{2m-1,\alpha}_{\loc}(\R{2m}\backslash S_0)$ as $k \to \infty$ that
\begin{equation}\label{eq76bis}
|\nabla \tilde v_k|\leq \frac{C(1+o(1))}{v_k(0)|\nabla^\ell v_k(0)|\dist(y,S_0)}
=\frac{C(1+o(1))}{L_k\dist(y,S_0)} \to 0.
\end{equation}
Notice that this is stronger than its analogue \eqref{eq76}.
As in the case $\ell=1$ we have
\begin{equation}\label{eq77bis}
\Delta^m \tilde v_k=\frac{v_k(0)}{v_k}\frac{v_k\Delta^{m}v_k}{v_k(0)|\nabla^\ell v_k(0)|}
\leq \frac{C(R)}{L_k}\to 0
\end{equation}
uniformly on $K_{k,R}$ as $k\to\infty$, for any $R>0$, hence $\tilde v_k\to \tilde v$ 
in $C^{2m-1,\alpha}_{\loc}(\R{2m}\backslash S_0)$, where $\tilde v$ satisfies
\begin{equation*}
\Delta^m \tilde v=0,\quad \tilde v(0)=0,\quad |\nabla^\ell \tilde v(0)|=1.
\end{equation*}
On the other hand \eqref{eq76bis} implies $\nabla \tilde v\equiv 0$, contradiction. 
This completes the proof. \hfill $\square$

\appendix

\section*{Appendix}\label{appendix}

We collect here some technical results used in the above sections.
The proof of the following proposition can be found in \cite{mar1}, Prop. 4.
\begin{prop}\label{c2m0} Let $\Delta^{m}h=0$ in $B_{2}\subset\R{n}$. For every $0\leq \alpha<1$ and $\ell\geq 0$
there is a  constant $C(\ell,\alpha)$ independent of $h$ such that
$$\|h\|_{C^{\ell,\alpha}(B_1)}\leq C(\ell,\alpha)  \|h\|_{L^1(B_2)}.$$
\end{prop}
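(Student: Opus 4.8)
The statement is the classical interior regularity estimate for the constant-coefficient elliptic operator $\Delta^m$, and the plan is to prove it in two steps: first an $L^1\to L^\infty$ bound on an intermediate ball, then an $L^\infty\to C^{\ell,\alpha}$ upgrade. Concretely, I would establish
$$\sup_{B_{7/4}}|h|\le C\,\|h\|_{L^1(B_2)}\qquad\text{and}\qquad \|h\|_{C^{\ell,\alpha}(B_1)}\le C(\ell,\alpha)\,\sup_{B_{7/4}}|h|,$$
and combine them. Since $\Delta^m h=0$ forces $h$ to be smooth (indeed real-analytic) in the interior, all the norms appearing here are finite a priori, so the content is purely quantitative.

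For the first step I would invoke Pizzetti's mean value formula for polyharmonic functions: whenever $\overline{B_\rho(x_0)}\subset B_2$ one has
$$\frac{1}{|B_\rho(x_0)|}\int_{B_\rho(x_0)}h\,dy=\sum_{j=0}^{m-1}c_j\,\rho^{2j}\,\Delta^jh(x_0),$$
with universal constants $c_j$ (depending only on $n,m$) and $c_0=1$; this follows by truncating the usual Pizzetti expansion using $\Delta^m h=0$. Fixing $x_0\in B_{7/4}$ and writing this identity for the $m$ distinct radii $\rho_i=i/(8m)$, $i=1,\dots,m$, one obtains a linear system in the $m$ unknowns $\Delta^jh(x_0)$, $0\le j\le m-1$, whose coefficient matrix is a generalized Vandermonde matrix in the numbers $\rho_i^{\,2}$ and hence invertible. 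Solving for $h(x_0)=\Delta^0h(x_0)$ and using $|B_{\rho_i}(x_0)|^{-1}\,|\int_{B_{\rho_i}(x_0)}h|\le C\,\|h\|_{L^1(B_2)}$ yields $|h(x_0)|\le C\,\|h\|_{L^1(B_2)}$ uniformly in $x_0\in B_{7/4}$.

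For the second step I would use that $\Delta^m(\partial^\beta h)=0$ for every multi-index $\beta$, together with the interior $L^2$ estimate $\|v\|_{W^{2m,2}(B_{\rho'})}\le C(\rho,\rho')\,\|v\|_{L^2(B_{\rho})}$ for solutions of $\Delta^m v=0$ with $\rho'<\rho$; iterating this on a finite chain of nested balls between $B_1$ and $B_{7/4}$ gives $\|h\|_{W^{k,2}(B_1)}\le C_k\,\|h\|_{L^2(B_{7/4})}\le C_k\,|B_{7/4}|^{1/2}\sup_{B_{7/4}}|h|$ for every $k$, and Sobolev embedding then turns this into the claimed $C^{\ell,\alpha}(B_1)$ bound (alternatively one may simply quote the interior Schauder estimates for $\Delta^m$). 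Chaining the two steps proves the proposition.

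The only genuine point is the first step: a bare interior elliptic estimate controls $h$ by its $L^2$ or $L^\infty$ norm, whereas here the hypothesis is stated only in terms of the integral ($L^1$) norm, and it is precisely Pizzetti's formula — read as a solvable $m\times m$ linear system in several radii — that bridges this gap. Everything after that is standard elliptic bootstrapping and Sobolev embedding.
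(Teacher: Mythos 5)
Your proof is correct. The paper itself does not prove Proposition \ref{c2m0} but refers to \cite{mar1}, Prop.\ 4, so there is no in-paper argument to compare line by line; however, the route you take is the standard one and meshes with the tools this paper already uses (Pizzetti's formula is invoked in the Appendix for Lemma \ref{trmliou}, with \cite{piz} cited).

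Two remarks on the details, both of which you handle correctly but which are the genuine content of the argument. First, in the $L^1\to L^\infty$ step the crucial point is that for $m\ge 2$ a single Pizzetti mean-value identity is not enough, since the left-hand side mixes $h(x_0)$ with $\Delta h(x_0),\dots,\Delta^{m-1}h(x_0)$; your device of evaluating the truncated Pizzetti expansion at $m$ distinct radii $\rho_1,\dots,\rho_m$ and inverting the resulting system is exactly what is needed, and the coefficient matrix $(c_j\rho_i^{2j})$ is invertible because the $c_j$ are all strictly positive (so one can factor them out of each column) and the remaining matrix is a Vandermonde matrix in the distinct numbers $\rho_i^2$. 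You also correctly keep $\rho_i\le 1/8$ so that the balls $B_{\rho_i}(x_0)$ stay inside $B_2$ for $x_0\in B_{7/4}$. Second, the bootstrap step works because every partial derivative $\partial^\beta h$ is again $m$-polyharmonic, so the interior $L^2$ elliptic estimate for $\Delta^m$ can be iterated on a finite chain of concentric balls to give $\|h\|_{W^{k,2}(B_1)}\le C_k\|h\|_{L^2(B_{7/4})}$ for arbitrary $k$, and then Sobolev embedding gives $C^{\ell,\alpha}(B_1)$; this is the place where the constant acquires its dependence on $\ell$ and $\alpha$. The chaining of the two steps is sound.
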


By a simple covering argument Proposition \ref{c2m0} can be extended to the case of annuli.

\begin{prop}\label{c2m} Let $\Delta^{m}h=0$ in $B_{2L}(0)\backslash B_{1/2L}(0)\subset\R{n}$ for some $L\geq 1$. 
For every $0\leq \alpha<1$ and $\ell\geq 0$ there is a  constant $C=C(\ell,\alpha,L)$ such that
$$\|h\|_{C^{\ell,\alpha}(B_{L}(0)\backslash B_{1/L}(0))}
\leq C \|h\|_{L^1(B_{2L}(0)\backslash B_{1/2L}(0))}.$$
\end{prop}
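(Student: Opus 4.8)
The plan is to deduce the statement from Proposition \ref{c2m0} by a standard covering and rescaling argument, exactly as announced before the statement. First I would keep $n$ fixed (as it is throughout the paper) and cover the closed annulus $\Cl{B_L(0)\backslash B_{1/L}(0)}$ by finitely many balls $B_{r_i}(x_i)$, $1\le i\le N$, with centers $x_i$ in the annulus and radii $r_i>0$ chosen so small that the concentric balls $B_{2r_i}(x_i)$ are all contained in $B_{2L}(0)\backslash B_{1/2L}(0)$. Since the distance of $\Cl{B_L(0)\backslash B_{1/L}(0)}$ to $\de\big(B_{2L}(0)\backslash B_{1/2L}(0)\big)$ is bounded below by a positive constant depending only on $L$, both $N$ and the $r_i$ can be taken to depend only on $n$ and $L$; in particular the $r_i$ range over a fixed compact subinterval of $(0,\infty)$.

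Next, on each ball I would rescale to the unit setting: given $\Delta^m h=0$ on $B_{2r_i}(x_i)$, set $\tilde h(y):=h(x_i+r_iy)$, so that $\Delta^m\tilde h=0$ on $B_2$ and Proposition \ref{c2m0} applies, giving $\|\tilde h\|_{C^{\ell,\alpha}(B_1)}\le C(\ell,\alpha)\|\tilde h\|_{L^1(B_2)}$. Undoing the dilation, the derivatives and Hölder seminorms of $h$ on $B_{r_i}(x_i)$ differ from those of $\tilde h$ on $B_1$ only by powers of $r_i$ — namely $\de^\beta\tilde h(y)=r_i^{|\beta|}(\de^\beta h)(x_i+r_iy)$, and the Hölder seminorm of order $\alpha$ picks up an additional factor $r_i^{\alpha}$ — while $\|\tilde h\|_{L^1(B_2)}=r_i^{-n}\|h\|_{L^1(B_{2r_i}(x_i))}\le r_i^{-n}\|h\|_{L^1(B_{2L}(0)\backslash B_{1/2L}(0))}$. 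Since $r_i$ lies in a compact subinterval of $(0,\infty)$ depending only on $n$ and $L$, all these powers of $r_i$ are bounded above and below, so
$$\|h\|_{C^{\ell,\alpha}(B_{r_i}(x_i))}\le C(n,\ell,\alpha,L)\,\|h\|_{L^1(B_{2L}(0)\backslash B_{1/2L}(0))}$$
for each $i$.

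Finally I would add these $N$ estimates, using that the balls $B_{r_i}(x_i)$ cover $B_L(0)\backslash B_{1/L}(0)$ and that $N=N(n,L)$, to obtain the asserted bound with a constant $C=C(\ell,\alpha,L)$, absorbing the fixed dependence on $n$. The only point requiring any care is the bookkeeping of the scaling factors in the $C^{\ell,\alpha}$ norm and the verification that the geometry of the cover — number of balls, their radii, and the separation of the two annuli — is controlled purely in terms of $L$; both are routine and involve no new idea beyond Proposition \ref{c2m0}.
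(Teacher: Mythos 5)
Your proposal is correct and fleshes out exactly the ``simple covering argument'' that the paper merely alludes to without giving details: cover the compact inner annulus by finitely many balls whose doubled copies stay inside the larger annulus (which is possible with radii and number of balls depending only on $n$ and $L$, since $\dist\big(\Cl{B_L\backslash B_{1/L}},\,\de(B_{2L}\backslash B_{1/2L})\big)\geq 1/(2L)$), rescale to unit scale to apply Proposition~\ref{c2m0}, undo the dilation picking up harmless bounded powers of the radii, and combine. The only point left implicit in your last step is that to control the $C^{\ell,\alpha}$ seminorm globally one must also bound the H\"older difference quotient for pairs of points lying in \emph{different} balls of the cover; this is routine (for nearby pairs refine the cover so that such pairs share a ball, and for well-separated pairs the quotient is dominated by the already-established $\sup$-norm bound on the derivatives divided by a positive separation constant), so it is not a genuine gap.
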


\begin{lemma}\label{lemmaparti} Let $g\in C^\infty(\overline B_t)$, where $B_t=B_t(0) \subset \R{n}$ 
for some $n\in \N$, $t>0$. Assume that $g$ is radially symmetric and satisfies
\begin{equation}\label{bordo}
g=\partial_\nu g=\ldots=\partial_\nu^{m-1}g=0\quad \textrm{on }\partial B_t.
\end{equation}
Then
\begin{equation}\label{parti}
\int_{\partial B_t}t^{m-1}\de_\nu^m g\, d\sigma
=\int_0^{t}t_2\cdots\int_0^{t_{m-1}}t_m\bigg(\int_{B_{t_m}}\Delta^{m}gdx\bigg) dt_m\ldots dt_2.
\end{equation}
\end{lemma}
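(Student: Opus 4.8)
The plan is to collapse the iterated radial integral on the right of \eqref{parti} into a single weighted integral over $B_t$ and then to integrate by parts. Writing $\int_{B_{t_m}}\Delta^m g\,dx=\int_{B_t}\Delta^m g(y)\,\mathbf 1_{\{|y|<t_m\}}\,dy$ and using Fubini, the right-hand side of \eqref{parti} becomes
$$\int_{B_t}\Delta^m g(y)\Big(\int_{\{|y|<t_m<\cdots<t_2<t\}}t_2t_3\cdots t_m\,dt_m\cdots dt_2\Big)\,dy.$$
The inner integral is evaluated with the change of variables $u_j=t_j^2/2$, $2\le j\le m$, under which $t_2\cdots t_m\,dt_m\cdots dt_2=du_m\cdots du_2$ and the simplex $\{|y|<t_m<\cdots<t_2<t\}$ becomes $\{|y|^2/2<u_m<\cdots<u_2<t^2/2\}$, whose volume is $(t^2-|y|^2)^{m-1}/(2^{m-1}(m-1)!)$. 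Hence \eqref{parti} is equivalent to
$$\int_{\partial B_t}t^{m-1}\partial_\nu^m g\,d\sigma=\int_{B_t}w\,\Delta^m g\,dy,\qquad w(y):=\frac{(t^2-|y|^2)^{m-1}}{2^{m-1}(m-1)!}.$$

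The key observation is that $w$ is a polynomial of degree $2m-2<2m$, so $\Delta^m w\equiv 0$. Thus the $m$-th order Green identity
$$\int_{B_t}\big(w\,\Delta^m g-g\,\Delta^m w\big)\,dy=\sum_{j=0}^{m-1}\int_{\partial B_t}\big(\Delta^j w\,\partial_\nu\Delta^{m-1-j}g-\partial_\nu\Delta^j w\,\Delta^{m-1-j}g\big)\,d\sigma$$
reduces $\int_{B_t}w\,\Delta^m g\,dy$ to the boundary sum on the right, and it remains to show that in this sum all terms but one vanish. For this I would use the following elementary facts on $\partial B_t$: since $g=\partial_\nu g=\cdots=\partial_\nu^{m-1}g=0$ there, for a radial function $\Delta^p g=0$ whenever $2p\le m-1$ and $\partial_\nu\Delta^p g=0$ whenever $2p+1\le m-1$ (because $\Delta^p g$, resp.\ $\partial_\nu\Delta^p g$, evaluated at $r=t$ is a linear combination of $g^{(j)}(t)$ with $j\le 2p$, resp.\ $j\le 2p+1$); and since $w=c_m(t^2-|y|^2)^{m-1}$ vanishes to order exactly $m-1$ at $\partial B_t$ and, for radial functions, each Laplacian lowers the order of vanishing by exactly $2$, one has $\Delta^j w=0$ when $m-1-2j\ge 1$ and $\partial_\nu\Delta^j w=0$ when $m-1-2j\ge 2$. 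A short case check according to the parity of $m$ then shows that at most one summand survives: for $m$ odd the term with $j=(m-1)/2$, namely $\int_{\partial B_t}\Delta^{(m-1)/2}w\,\partial_\nu\Delta^{(m-1)/2}g\,d\sigma$, and for $m$ even the term with $j=m/2-1$, namely $-\int_{\partial B_t}\partial_\nu\Delta^{m/2-1}w\,\Delta^{m/2}g\,d\sigma$.

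It then suffices to evaluate this single surviving term. Writing the radial Laplacian as $\partial_r^2+\frac{n-1}{r}\partial_r$, the $g$-factor occurring in it ($\partial_\nu\Delta^{(m-1)/2}g$ when $m$ is odd, $\Delta^{m/2}g$ when $m$ is even) has leading part $\partial_r^m$ with coefficient $1$ and all remaining terms of the form $r^{-i}\partial_r^j g$ with $j<m$; since $g^{(j)}(t)=0$ for $0\le j\le m-1$, this factor equals $g^{(m)}(t)$ at $\partial B_t$. For the $w$-factor one uses the identity
$$\Delta\big((t^2-|y|^2)^p\big)=-2p(n+2p-2)(t^2-|y|^2)^{p-1}+4p(p-1)\,t^2\,(t^2-|y|^2)^{p-2},$$
in which only the last summand contributes to the value (resp.\ normal derivative) of $w$-iterates at $\partial B_t$; telescoping the resulting product of coefficients $4p(p-1)t^2$ gives $\Delta^{(m-1)/2}w=t^{m-1}$ on $\partial B_t$ when $m$ is odd and $\partial_\nu\Delta^{m/2-1}w=-t^{m-1}$ on $\partial B_t$ when $m$ is even. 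In either case the surviving term equals $t^{m-1}g^{(m)}(t)\int_{\partial B_t}d\sigma=\int_{\partial B_t}t^{m-1}\partial_\nu^m g\,d\sigma$, which is \eqref{parti}. (For $m=1$ the identity is just the divergence theorem, and the boundary conditions are not used.)

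I expect the main obstacle to be the bookkeeping of the last two steps: correctly identifying which of the $2m$ boundary integrands in the Green identity vanish — this is sensitive to the parity of $m$ — and computing the one non-vanishing weight factor, which rests on the fact that $w$ vanishes to order exactly $m-1$ at $\partial B_t$ and that each application of $\Delta$ reduces this by exactly $2$. Everything else is a routine application of Fubini and of Green's formula. An alternative would be a direct induction on $m$ with the divergence theorem as base case; but there each inductive step requires an integration by parts in $r$ that branches into several terms, so the route above appears cleaner.
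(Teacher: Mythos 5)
Your proof is correct, and it takes a genuinely different route from the paper's. The paper argues by induction on $m$: it introduces the auxiliary function $\varphi(x)=x\cdot\nabla g(x)$, applies the inductive hypothesis to $\varphi$ (which inherits the boundary conditions up to order $m-2$), and uses the divergence theorem and the commutator identity $\Delta^{m-1}(x\cdot\nabla g)=x\cdot\nabla\Delta^{m-1}g+2(m-1)\Delta^{m-1}g$ to pass from step $m-1$ to step $m$. You instead collapse the iterated radial integral via Fubini into the single weighted pairing $\int_{B_t}w\,\Delta^m g\,dy$ with $w=(t^2-|y|^2)^{m-1}/(2^{m-1}(m-1)!)$ — I checked the change of variables $u_j=t_j^2/2$ and the simplex volume, which are correct — and then apply the $m$-fold Green identity, exploiting that $w$ is a polynomial of degree $2m-2$, hence $\Delta^m w\equiv 0$. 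The parity case check is right: of the $2m$ boundary integrals, only the one with $j=(m-1)/2$ (for $m$ odd) or $j=m/2-1$ (for $m$ even) survives the combination of the order-of-vanishing constraints on $\Delta^j w$ (which vanishes to order exactly $m-1-2j$ at $\partial B_t$) and the boundary conditions on $g$ (which force $\Delta^p g=0$ for $2p\le m-1$ and $\partial_\nu\Delta^p g=0$ for $2p+1\le m-1$). I also verified your telescoping of the coefficients $4p(p-1)t^2$: for $m$ odd, $\Delta^{(m-1)/2}w|_{\partial B_t}=4^{k}(2k)!\,t^{2k}/(2^{2k}(2k)!)=t^{m-1}$ with $k=(m-1)/2$, and for $m$ even the analogous computation gives $\partial_\nu\Delta^{m/2-1}w|_{\partial B_t}=-t^{m-1}$, so that in both parities the single surviving boundary term equals $\int_{\partial B_t}t^{m-1}\partial_\nu^m g\,d\sigma$. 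Your approach is conceptually illuminating in that it identifies the iterated integral as the pairing of $\Delta^m g$ with an explicit polyharmonic weight; the price is the parity-sensitive boundary bookkeeping and the telescoping coefficient computation, which the paper's induction sidesteps. Both proofs are complete and roughly comparable in length.
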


\begin{proof} For $m=1$ equation \eqref{parti} simply reduces to
\begin{equation}\label{a.1}
\int_{\partial B_t}\de_\nu g\, d\sigma=\int_{B_t}\Delta g\, dx.
\end{equation}
For $m = 2$ consider the function $\varphi(x)= x\cdot \nabla g(x)$ with 
$$\int_{\partial B_t}\de_\nu \varphi\, d\sigma
=\int_{B_t}\Delta \varphi\, dx=\int_{B_t}\Delta (x\cdot \nabla g) dx
=\int_{B_t}(x\cdot \nabla\Delta g + 2\Delta g)dx$$
and note that the condition $\partial_\nu g=0$ on $\de B_t$ and \eqref{a.1} imply 
$$\int_{\partial B_t}\de_\nu \varphi\, d\sigma 
= \int_{\partial B_t}\de_\nu (x\cdot \nabla g) d\sigma 
= \int_{\partial B_t}t\de_\nu^2 g\, d\sigma,\ \text{ and } 
\int_{B_t}\Delta g\, dx = 0.$$
Thus from Fubini's theorem we obtain the desired identity
\begin{equation*}
\begin{split}
  \int_{\partial B_t}t\de_\nu^2 g\, d\sigma & = \int_{B_t}x\cdot \nabla\Delta g\, dx\\
  & =\int_0^{t}t_2\bigg(\int_{\partial B_{t_2}}\de_\nu \Delta g\, d\sigma\bigg) dt_2
  =\int_0^{t}t_2\bigg(\int_{B_{t_2}}\Delta^2 g\, dx\bigg) dt_2.
\end{split}
\end{equation*}
We now proceed by induction. Assume that the lemma is true for $m-1$. 
Choosing $\varphi(x)= x\cdot \nabla g(x)$ with
$$\varphi=\partial_\nu \varphi=\ldots=\partial_\nu^{m-2}\varphi=0\quad \textrm{on }\partial B_t$$ we get
\begin{equation*} 
\begin{split}
& \int_{\partial B_t} t^{m-1} \de_\nu^m gd\sigma=\int_{\partial B_t}t^{m-2}\de_\nu^{m-1}(t\de_\nu g)d\sigma
=\int_{\de B_t}t^{m-2}\de_\nu^{m-1}(x\cdot \nabla g)d\sigma\\
&\quad =\int_0^t t_2\cdots\int_0^{t_{m-2}}t_{m-1}
\int_{B_{t_{m-1}}}\Delta^{m-1}(x\cdot \nabla g)dx dt_{m-1}\ldots dt_2=:I.
\end{split}
\end{equation*}
Observe that 
$\Delta^{m-1}(x\cdot \nabla g)=x\cdot \nabla \Delta^{m-1}g+2(m-1)\Delta^{m-1}g$, hence
\begin{equation*} 
\begin{split}
I&=\int_0^t t_2\cdots\int_0^{t_{m-2}}t_{m-1}
\int_{B_{t_{m-1}}} (x\cdot \nabla \Delta^{m-1} g) dx dt_{m-1}\ldots dt_2\\
&\quad +2(m-1)\int_0^t t_2\cdots\int_0^{t_{m-2}}t_{m-1}\int_{B_{t_{m-1}}}\Delta^{m-1}g
dx dt_{m-1}\ldots dt_2\\
&=II+III.
\end{split}
\end{equation*}
By inductive hypothesis and \eqref{bordo} the contribution from the second term is
\begin{equation*}
III=2(m-1)\int_{\partial B_t}t^{m-2}\partial_\nu^{m-1}gd\sigma=0,
\end{equation*}
and our claim follows from writing
\begin{equation}\label{parti3}
\begin{split}
\int_{B_{t_{m-1}}}x\cdot \nabla\Delta ^{m-1}g dx 
& =\int_0^{t_{m-1}}t_m\int_{\partial B_{t_m}} \de_\nu \Delta^{m-1}g d\sigma dt_m\\
& =\int_0^{t_{m-1}}t_m\int_{B_{t_m}} \Delta^m g dx dt_m.
\end{split}
\end{equation}
\end{proof}

\begin{lemma}\label{trmliou} Let $u\in C^\infty(\R{n})\cap L^p(\R{n})$, for some $p\geq 1$, satisfy $\Delta^j u=0$ for some integer $j>0$. Then $u\equiv 0$.
\end{lemma}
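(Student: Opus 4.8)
The plan is to combine the interior estimate for polyharmonic functions recorded in Proposition \ref{c2m0} with a simple scaling argument, so that no separate Liouville theorem is actually needed. The role of the $L^p$ hypothesis is only to produce decay of certain averages of $u$ over large balls.

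First I would fix an arbitrary point $x_0\in\R{n}$ and, for $R>0$, rescale by setting $v(y):=u(x_0+Ry)$ for $|y|\le 2$. Since $\Delta^j v(y)=R^{2j}(\Delta^j u)(x_0+Ry)\equiv 0$ in $B_2$, Proposition \ref{c2m0} (applied with $m=j$, $\ell=0$, $\alpha=0$) yields $|u(x_0)|=|v(0)|\le\|v\|_{C^0(B_1)}\le C\|v\|_{L^1(B_2)}$, with $C$ independent of $R$. Next I would track the $R$-dependence: the change of variables gives $\|v\|_{L^1(B_2)}=R^{-n}\|u\|_{L^1(B_{2R}(x_0))}$, and by H\"older's inequality $\|u\|_{L^1(B_{2R}(x_0))}\le |B_{2R}|^{1-1/p}\|u\|_{L^p(\R{n})}=CR^{n(1-1/p)}\|u\|_{L^p(\R{n})}$. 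Combining these bounds, $|u(x_0)|\le CR^{-n/p}\|u\|_{L^p(\R{n})}$, and letting $R\to\infty$ forces $u(x_0)=0$; since $x_0$ was arbitrary, $u\equiv 0$.

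I expect no genuine obstacle here: the only point requiring (minor) care is the bookkeeping of scaling exponents, namely checking that the Jacobian factor $R^{-n}$ dominates the growth $R^{n(1-1/p)}$ coming from H\"older on the expanding ball, which it does for every finite $p\ge 1$, leaving the strictly negative power $R^{-n/p}$. (As an alternative one could invoke Pizzetti's mean value formula $|B_R|^{-1}\int_{B_R(x_0)}u\,dx=\sum_{k=0}^{j-1}c_kR^{2k}\Delta^k u(x_0)$ for $j$-polyharmonic functions, observe that the left-hand side tends to $0$ as $R\to\infty$ by $L^p$-integrability, and conclude that this polynomial in $R$ has all coefficients zero; but the scaling proof above is more self-contained within the present paper since it only uses Proposition \ref{c2m0}.)
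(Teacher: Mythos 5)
Your proof is correct and takes a genuinely different route from the paper's. The paper invokes Pizzetti's formula: for a $j$-polyharmonic function the spherical/ball average $\Intm_{B_R(\xi)}u\,dx$ is a polynomial $P(R)=u(\xi)+c_1R^2\Delta u(\xi)+\cdots+c_{j-1}R^{2j-2}\Delta^{j-1}u(\xi)$ in $R$, and Jensen's inequality together with $u\in L^p$ forces $P(R)\to 0$ as $R\to\infty$, hence $P\equiv 0$ and in particular $u(\xi)=P(0)=0$. You instead combine Proposition \ref{c2m0} with a scaling argument: rescale to $v(y)=u(x_0+Ry)$, which remains $j$-polyharmonic in $B_2$ with the same $j$, so $|u(x_0)|\le C\|v\|_{L^1(B_2)}=CR^{-n}\|u\|_{L^1(B_{2R}(x_0))}\le CR^{-n/p}\|u\|_{L^p(\R{n})}$, and let $R\to\infty$. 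Your exponent bookkeeping is right ($-n+n(1-1/p)=-n/p<0$ for every finite $p\ge 1$), and the constant from Proposition \ref{c2m0} is independent of $R$ since $j$ is fixed. What each route buys: Pizzetti extracts slightly more structure (it shows the whole polynomial, hence every $\Delta^k u(\xi)$, vanishes), while your scaling argument is more self-contained within the paper — it reuses the interior estimate already stated in the appendix and needs no external citation. You even flag the Pizzetti alternative yourself, so you were aware of both. No gaps.
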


\begin{proof} We first claim that
$$\lim_{R\to\infty}\Intm_{B_R(\xi)}u dx=0$$
for every $\xi\in\R{n}$. Indeed by Jensen's inequality
$$\bigg|\Intm_{B_R(\xi)}u dx\bigg|\leq \Intm_{B_R(\xi)}|u| dx\leq
\bigg(\Intm_{B_R(\xi)}|u|^p dx\bigg)^\frac{1}{p}\leq \frac{1}{R^{n/p}}\|u\|_{L^p(\R{n})}\to 0,$$
as $R\to\infty$. By Pizzetti's formula (see \cite{piz}) we have constants $c_1,\ldots, c_{j-1}$ such that
$$\Intm_{B_R(\xi)}u dx=u(\xi)+c_1R^2\Delta u(\xi)+\cdots+c_{j-1}R^{2j-2}\Delta^{j-1}u(\xi)=:P(R).$$
Taking the limit as $R\to\infty$ we see at once that the polynomial $P(R)$ is identically $0$, and in particular $u(\xi)=P(0)=0$. Since $\xi$ was arbitrary the proof is complete.
\end{proof}

\begin{lemma}\label{lemcap} There holds
$$\mathrm{cap}_{H^m}(\{0\}) = \inf \{\|\nabla^m \varphi\|_{L^2};\ \varphi \in X\} = 0,$$
where
$$X=\{\varphi \in C^{\infty}_0(B_1(0));\ 0 \le \varphi \le 1,\exists r >0: \varphi(x)= 1 \text{ for } |x| \le r \}.$$
\end{lemma}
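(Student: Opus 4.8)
The plan is to prove that the infimum equals $0$ by exhibiting, for every $\varepsilon>0$, an explicit radial function $\varphi\in X$ with $\|\nabla^m\varphi\|_{L^2}<\varepsilon$, the reverse inequality being trivial. The function will be a logarithmic cutoff adapted to the borderline Sobolev exponent $mp=2m=n$ in dimension $n=2m$. Concretely, I would fix once and for all a function $\phi\in C^\infty(\mathbb{R})$ with $0\le\phi\le 1$, $\phi\equiv 0$ on $]-\infty,0]$ and $\phi\equiv 1$ on $[1,\infty[$, and for $T>2$ set $g_T(t):=\phi\big(\tfrac{t-1}{T-1}\big)$ and $\varphi_T(x):=g_T(\log(1/|x|))$ for $0<|x|<1$, extended by $1$ for small $|x|$ and by $0$ near $|x|=1$. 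One checks directly that $\varphi_T$ is radial, takes values in $[0,1]$, is supported in $\overline{B_{e^{-1}}(0)}\subset B_1(0)$, equals $1$ on $B_{e^{-T}}(0)$, and is smooth on all of $\R{2m}$ (smooth away from the origin as a composition of smooth maps, and locally constant near the origin); hence $\varphi_T\in X$.

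Next I would invoke the elementary formula for the higher derivatives of a radial function: if $\varphi(x)=f(|x|)$ is $C^m$ away from $0$, then for $|\alpha|=m$ the derivative $\partial^\alpha\varphi$ is a linear combination, with coefficients bounded in terms of $m$ and $n$ only (they are bounded functions of $x/|x|$), of the terms $f^{(j)}(|x|)\,|x|^{j-m}$, $1\le j\le m$; note that $f$ itself never occurs. Passing to polar coordinates in $\R{2m}$ this yields
\begin{equation*}
  \|\nabla^m\varphi_T\|_{L^2(\R{2m})}^2\le C\sum_{j=1}^m\int_0^{\infty}|f_T^{(j)}(r)|^2\,r^{2j-1}\,dr,
\end{equation*}
with $f_T(r):=g_T(\log(1/r))$. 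The key computation is then the substitution $r=e^{-t}$, under which $\tfrac{d}{dr}=-r^{-1}\tfrac{d}{dt}$, so that $f_T^{(j)}(r)=r^{-j}\sum_{i=1}^j a_{ji}\,g_T^{(i)}(t)$ with universal constants $a_{ji}$, while $r^{2j-1}\,dr=(r^{-1}\,dr)\,r^{2j}=-r^{2j}\,dt$. Each of the scale-invariant integrals above therefore transforms into
\begin{equation*}
  \int_0^{\infty}|f_T^{(j)}(r)|^2\,r^{2j-1}\,dr=\int_0^{\infty}\Big|\sum_{i=1}^j a_{ji}\,g_T^{(i)}(t)\Big|^2\,dt\le C\sum_{i=1}^j\int_0^{\infty}|g_T^{(i)}(t)|^2\,dt.
\end{equation*}

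Finally, since $g_T^{(i)}(t)=(T-1)^{-i}\phi^{(i)}\big(\tfrac{t-1}{T-1}\big)$ and $\phi^{(i)}$ is smooth and compactly supported for $i\ge 1$, one more scaling gives $\int_0^{\infty}|g_T^{(i)}(t)|^2\,dt=(T-1)^{1-2i}\|\phi^{(i)}\|_{L^2(\mathbb{R})}^2\to 0$ as $T\to\infty$, because $1-2i\le -1$. Combining the three displays yields $\|\nabla^m\varphi_T\|_{L^2}\to 0$ as $T\to\infty$, which proves the lemma. I do not expect a genuine obstacle here: this is the standard argument that a point has zero $H^m$-capacity exactly at the borderline dimension $n=2m$, and the only thing requiring (routine) care is the bookkeeping in the radial derivative formula and in the logarithmic change of variables.
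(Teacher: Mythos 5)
Your proof is correct, and it takes a genuinely different (though equally classical) route from the paper's. The paper fixes the cutoff profile once and for all and instead shifts the argument: it sets $f(x)=\log\log\log(1/|x|)$, asserts $\nabla^m f\in L^2$ near the origin (the standard borderline example), and takes $\varphi_k=g(f-k)$; then $\nabla^m\varphi_k$ is supported in the shrinking annulus $\{k\le f\le k+1\}$, and the chain-rule terms are controlled by the tail of $\|\nabla^m f\|_{L^2}$ on that annulus, which goes to $0$. You instead keep the plain logarithm and stretch the cutoff in the logarithmic variable, i.e.\ $\varphi_T=\phi\big(\frac{\log(1/|x|)-1}{T-1}\big)$, and the whole estimate reduces, after $r=e^{-t}$, to the elementary one-dimensional scalings $\int|g_T^{(i)}|^2\,dt=(T-1)^{1-2i}\|\phi^{(i)}\|_{L^2}^2\to 0$. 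What your version buys is that it is fully self-contained and the mechanism is transparent: the conformal invariance of the weight $r^{2j-1}\,dr$ together with the $\log$ change of variables turns the critical-exponent cancellation into an explicit algebraic scaling, with no appeal to the finiteness of $\|\nabla^m\log\log\log(1/|x|)\|_{L^2}$. What the paper's version buys is brevity: once one accepts that iterated-log fact, the argument is two lines, and the same family $\varphi_k$ works verbatim without any re-derivation of the radial chain rule. Both proofs are correct and of comparable rigor; yours spells out the computation the paper leaves implicit.
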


\begin{proof} Let $f(x) = \log \log \log(1/|x|)$ 
with $\nabla^m f \in L^2(B_{e^{-e}}(0))$ and fix $g \in C^{\infty}({\mathbb R})$ 
with $0 \le g \le 1$ satisfying $g(s) = 0$ for $s \le 0$,
$g(s) = 1$ for $s \ge 1$. Letting 
$$\varphi_k(x) = g(f(x)-k),\ k \in \N,$$
we find $\varphi_k \in X$ for all $k$ and
$\|\nabla^m \varphi_k\|_{L^2} \to 0$ as $k \to \infty$.
\end{proof}


\end{document}